\renewcommand\NAT@citesuper[3]{\ifNAT@swa
\if*#2*\else#2\NAT@spacechar\fi
\unskip\kern\p@\textsuperscript{\NAT@@open#1\if*#3*\else,\NAT@spacechar#3\fi\NAT@@close}%
   \else #1\fi\endgroup}
\renewcommand{\kappa}{\varkappa}
 \providecommand{\keywords}[1]{\textbf{Keywords}  #1}
\newtheorem{lemma}{Lemma}[section]
\newtheorem{theorem}{Theorem}[section]
\newtheorem{remark}[lemma]{Remark}
\newcommand{\eps}{\varepsilon}
\newcommand{\sfrei}{}
\newcommand{\lang}{}
\begin{document}

\title{An
 edge-based pressure stabilisation technique for 
 finite elements on arbitrarily anisotropic meshes\protect\thanks{
 The author was supported 
 by the DFG Research Scholarship FR3935/1-1}}

\author{Stefan Frei\thanks{Department of Mathematics, University College London, Gower Street, WC1E 6BT, London, UK (s.frei@ucl.ac.uk)}}

\date{}

\maketitle

% \documentclass[AMA,STIX1COL]{WileyNJD-v2}
% 
% \articletype{Article Type}%
% 
% % \received{26 April 2016}
% % \revised{6 June 2016}
% % \accepted{6 June 2016}
% 
% \raggedbottom

%\usepackage{xcolor}

\abstract{
In this article, we analyse a stabilised equal-order finite
element approximation for the Stokes equations on
anisotropic meshes. In particular, we allow arbitrary anisotropies in a sub-domain, for example 
along the boundary of the domain, with the only condition that a maximum angle is fulfilled in each 
element.
This discretisation is
motivated by applications on moving domains
as arising e.g.$\,$in fluid-structure interaction or multiphase-flow problems.  
To deal with the anisotropies, we define
a modification of the original Continuous Interior Penalty stabilisation
approach. We show analytically the discrete stability of the method and convergence
of order ${\cal O}(h^{3/2})$ in the energy norm and ${\cal O}(h^{5/2})$ in the $L^2$-norm 
of the velocities.
We present numerical examples for a linear Stokes problem and for a non-linear fluid-structure
interaction problem, that substantiate the analytical results and show the capabilities of 
the approach.\\}

\keywords{Anisotropic meshes, Continuous interior penalty,
pressure stabilisation, moving domains, Locally modified finite elements}

\maketitle

\section{Introduction}
\label{intro}

The motivation of this work is the finite element discretisation of the Stokes- or Navier-Stokes equations
on moving domains with finite elements. In order to impose boundary conditions and to obtain a certain
accuracy, it is necessary to resolve the evolving boundary within the discretisation.
The construction of fitted finite element meshes might not be straight-forward, however, when the domain 
$\Omega(t)$ changes from time step to time step. Constructing a new mesh in each time step can 
be expensive and projections to
the new mesh have to be chosen carefully in order to conserve the accuracy of the method. 
If the domain is 
not resolved accurately by the mesh, a severe reduction in the overall accuracy might result, 
see e.g.~\cite{Babuska1970} in the context of interface problems.

A simple method that avoids the decrease in accuracy as well as the computational cost to design new meshes
is the \textit{locally modified finite element
method} introduced by the author and Richter for elliptic interface problems~\cite{FreiRichter14}. 
The idea is to use a fixed coarse ``patch'' triangulation ${\cal T}_{2h}$ of a larger domain $D$
consisting of quadrilaterals, which is independent 
of the position of the boundary.
Based on this triangulation {\lang the patch elements are divided in such a way into 
either eight triangles or
four quadrilaterals, that} the boundary is resolved in a linear approximation. 
The degrees of freedom that lie outside of $\Omega$
can then be eliminated from the system. The \textit{locally modified finite element method} has been used by the author and 
co-workers~\cite{FreiRichterWick_Growth,FreiRichterWick_enumath1,FreiRichterWick_enumath2,DissFrei},
and by Langer \& Yang~\cite{LangerYang} for fluid-structure interaction problems. 
Holm et al.~\cite{Holmetal} and Gangl \& Langer~\cite{Gangletal} developed a
corresponding approach based on triangular patches, the latter work originating in the context 
of topology optimisation. 

The difficulty of this method lies in the highly anisotropic mesh cells, that can arise in the boundary 
region. Moreover, the type of anisotropy can change almost arbitrarily between neighbouring mesh cells.
This is in particular an issue in saddle-point problems, where the discrete spaces
have to satisfy a discrete \textit{inf-sup} condition. For many of the standard finite element pairs 
commonly used to approximate the Stokes or Navier-Stokes equations, the discrete \textit{inf-sup} condition 
is not robust with respect to anisotropies, see for example
the discussion in~\cite{AinsworthCoggins2000, BraackLubeRoehe2012}. An alternative is to 
use \textit{equal-order} 
finite elements in combination with a pressure stabilisation term that takes the anisotropies 
into account.

Highly anisotropic meshes arise also in very different applications. Obvious examples are
those where an anisotropic domain has to be discretised, e.g. when studying lubrication film 
dynamics~\cite{KnaufetalKugellager}. Anisotropic meshes are also used to resolve boundary or
interior layers, originating for example in convection-dominated problems. We refer to the textbook of 
Lin\ss~\cite{Linss2009} for an overview over some techniques to construct layer-adapted (so-called Shishkin
and Bakhvalov) meshes.
In the context of the Navier-Stokes equations, anisotropic meshes are used to resolve boundary layers
arising for moderate up to higher Reynolds numbers, see for 
example~\cite{PauliBehr2017, ApelKnoppLube2008, CodinaSoto2004, CastroHechtMohammadiPironneau1997}. There and in
many other applications,
anisotropic mesh refinement has proven a very efficient tool to reduce the computational costs, especially in
three space dimensions, see e.g.~\cite{Siebert1996,FormaggiaMichelettiPerotto2004,Richter2010,
LoseilleDervieuxAlauzet2010}. 
% Last but not least a further reason to use highly anisotropic mesh elements
% are efficient discretisation of complicated geometries that might be resolved 
% more easily with anisotropic mesh cells~\cite{}.

In this work, we will analyse the following linear Stokes model problem
\begin{align}
\begin{split}
 -\nu \Delta v + \nabla p &= f \\
 \text{div } v &=0
 \end{split} \quad \text{ in } \Omega,
 \begin{split}
 v &= 0 \quad \text{on } \Gamma^d \subset \partial\Omega,\\
 \nu \partial_n v - pn &= 0 \quad\text{on } \Gamma^n := \partial\Omega\setminus \Gamma^d,
   \end{split}\label{StrongStokes}
\end{align}
where we assume $\Gamma^d \neq \emptyset$. To simplify the error analysis, we will assume that 
$\Omega\subset\mathbb{R}^2$ is a convex polygonal domain. Both the restrictions {\lang to a convex polygon 
and to two space dimensions} are made only to simplify the {\lang presentation}.

Pressure stabilisation on anisotropic meshes has been studied for the \textit{Pressure-Stabilised
Petrov-Galerkin} (PSPG)
method~\cite{HughesFrancaBalestra1986}
by Apel, Knopp \& Lube~\cite{ApelKnoppLube2008} and for
\textit{Local Projection Stabilisations} (LPS)~\cite{BeckerBraack2001}
by Braack \& Richter~\cite{BraackRichter2005Enumath}.
For the analysis,
it seems however necessary for both methods that the change in anisotropy between 
neighbouring cells is bounded. 
This assumption can not be guaranteed for the locally modified finite element method, 
as we will explain in Section~\ref{sec.LMFEM}. Moreover, a coarser ``patch mesh'' necessary for 
the LPS method might not be available in the case of complex domains. Within the 
\textit{Galerkin Least Squares} (GLS) method~\cite{HughesFrancaetal1989}, optimal-order estimates for low-order schemes
have been obtained by Micheletti, Perotto \& Picasso~\cite{MichelettiPerottoPicasso2003}
without the assumption of a bounded change of anisotropy. {\sfrei Further works concerning GLS or PSPG pressure stabilisations
on anisotropic meshes include the references~\cite{FormaggiaMichelettiPerotto2004, MichelettiPerotto2007, PauliBehr2017}.}
Concerning the stabilisation of convection-dominated convection-diffusion equations, we refer to the survey article~\cite{JohnKnoblochSurvey}
and the textbook~\cite{RoosStynesTobiska}.

In this work, we will use a variant of the \textit{Continuous Interior Penalty} (CIP) 
stabilisation technique introduced by Burman \& Hansbo 
for convec\-tion-dif\-fu\-sion-reaction
problems~\cite{BurmanHansboConvDiff}. Later on, it has been used for 
pressure stabilisation within the Stokes~\cite{BurmanHansbo2006edge} and the 
Navier-Stokes equations~\cite{BurmanFernandezHansboNS}. The original CIP technique relies
on penalising jumps of the pressure gradient over element edges weighted by a factor 
$\mathcal{O}(h^s)$ for $s=2$ or $s=3$.
This is not applicable for the case of abrupt changes of anisotropy, however, 
as the cell {\lang sizes} of the two neighbouring cells can be very different.
Hence, in the boundary cells, we will use a weighted average of the pressure gradient 
instead of the jump terms.

Up to now, very few literature can be found for edge-based stabilisation 
techniques on anisotropic meshes. A few publications can be found for stabilisation of
convection-dominated CDR equations, see 
e.g.$\,$Micheletti \& Perotto~\cite{MichelettiPerotto} who designed a strategy for
anisotropic mesh refinement 
in an optimal control context. In these works, however, the jump terms are 
weighted by the edge size $h_{\tau}$, as the mesh size 
in direction normal to the edge might change significantly
from one cell to another. This is not an appropriate scaling for terms 
involving the normal derivative, that have to be used for pressure stabilisation.
To the best of the author's knowledge a detailed analysis of an edge-based pressure 
stabilisation method on arbitrarily anisotropic grids is not available in the 
literature yet.

% In many of the mentioned applications, 
% anisotropic mesh cells are only needed in a layer around the boundary $\partial\Omega$.
% If we assume that the domain $\Omega_h^{\text{aniso}}$ spanned by 
% the union of anisotropic cells is of order ${\cal O}(h)$. $h^{5/2}$

The remainder of the article is organised as follows: In Section~\ref{sec.LMFEM}, we briefly 
review the locally modified finite element method, which is the main motivation for the present work.
In Section~\ref{sec.discgeneral}, we formulate the much more general assumptions on the finite element
method, that we will use in the analysis.
Next, in Section~\ref{sec.press}, we introduce the pressure stabilisation as well
as a projection operator for the discrete pressure gradient.
%that will be needed {\lang in} the analysis. 
In Section~\ref{sec.propstab}, we show the properties
of the stabilisation term {\lang that will be needed in the analysis. Then,} we show the stability of discrete solutions
in Section~\ref{sec.stab} and derive a priori error estimates
in Section~\ref{sec.apriori}. Finally, we present {\sfrei three} numerical examples
in Section~\ref{sec.num}: First, we apply the method to {\sfrei solve stationary Stokes problems 
on extremely anisotropic meshes in Sections~\ref{sec.motstab} and \ref{sec.num_stat}}. Then, we study a non-stationary
{\sfrei and nonlinear} fluid-structure interaction problem with a moving {\lang interface} in Section~\ref{sec.num_fsi}.

\section{Discretisation}

In order to motivate the pressure stabilisation and some of the assumptions made below, we
start with a brief review of the \textit{locally modified finite element} method proposed by the author and 
Richter~\cite{FreiRichter14}. In Section~\ref{sec.discgeneral}, we will introduce 
the more general assumptions on the discretisation and the anisotropy of the mesh, that {\lang will be used} 
to prove stability and error estimates.

\subsection{The locally modified finite element method}
\label{sec.LMFEM}
\begin{figure}[t]
  \centering
  \begin{picture}(0,0)%
\includegraphics{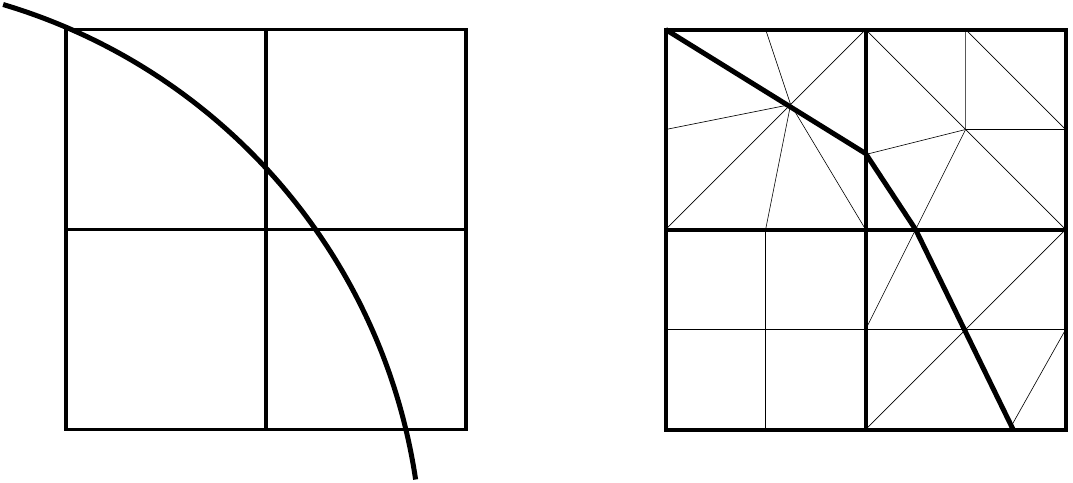}%
\end{picture}%
\setlength{\unitlength}{1579sp}%
\begingroup\makeatletter\ifx\SetFigFont\undefined%
\gdef\SetFigFont#1#2{%
  \fontsize{#1}{#2pt}%
  \selectfont}%
\fi\endgroup%
\begin{picture}(12836,5776)(413,-5799)
\put(12710,-5613){\makebox(0,0)[lb]{\smash{{\SetFigFont{7}{8.4}{\color[rgb]{0,0,0}$\partial\Omega_h$}%
}}}}
 \put(2401,-5611){\makebox(0,0)[lb]{\smash{{\SetFigFont{7}{8.4}{\color[rgb]{0,0,0}$\Omega$}%
 }}}}
%\put(6301,-2461){\makebox(0,0)[lb]{\smash{{\SetFigFont{7}{8.4}{\color[rgb]{0,0,0}$\Omega$}%
%}}}}
\put(5506,-5611){\makebox(0,0)[lb]{\smash{{\SetFigFont{7}{8.4}{\color[rgb]{0,0,0}$\partial\Omega$}%
}}}}
\end{picture}%
  \caption{\textit{Left:} Triangulation ${\cal T}_{2h}$ of a domain $D$ that
   contains $\Omega$. \textit{Right:}
    Subdivision of the patches $P\in{\cal T}_{2h}$ such that the boundary $\partial\Omega$
    is resolved in a linear approximation by the discrete boundary $\partial\Omega_h$. Note that 
    the exterior cells 
    will not be used in the calculation.} 
  \label{fig:mesh}
\end{figure}

Let ${\cal T}_{2h}$ be a form- and shape-regular triangulation of a domain
$D\subset\mathbb{R}^2$ {\lang that contains $\Omega$} into open quadrilaterals. The triangulation ${\cal T}_{2h}$
does not necessarily resolve the domain
$\Omega$ and the boundary $\partial\Omega$ can
cut elements $P\in{\cal T}_{2h}$. 

% The interface $\Gamma$
% may cut the patches in the following way: 
% %
% \begin{enumerate}
% \item Each (open) patch $P\in\Omega_h$ is either not cut
%   $P\cap\Gamma=\emptyset$ or cut in exactly two points on its 
%   boundary: $P\cap\Gamma\neq \emptyset$ and $\partial
%   P\cap\Gamma=\{x^P_1,x^P_2\}$.
% \item If a patch is cut, the two cut-points $x^P_1$ and $x^P_2$ may
%   not be inner points of the same edge.
% \end{enumerate}
%
% In principle, these assumptions only rule out two possibilities: a
% patch may not be cut multiple times and the interface may not enter
% and leave the patch at the same edge. Both situations can be avoided
% by refinement of the underlying mesh. If the interface is matched by
% an edge, the patch is not considered cut. 

Each patch $P$, which is not cut by the boundary $\partial\Omega$, is split into four quadrilaterals.
If the boundary goes through a patch $P$, we {\lang divide $P$} in such a way into
eight triangles that the boundary is resolved in a linear approximation.
To achieve this, we place degrees of freedom to the points of 
intersection $e_i\in \partial P\cap \partial\Omega, i=1,2$, see Figure\ref{fig:mesh} 
and the left sketch in Figure~\ref{fig:types}.

We define the finite element trial space $V_h^{\text{LMFEM}}$ as an
iso-parametric space on the triangulation ${\cal T}_{2h}$:
\[
V_h^{\text{LMFEM}} = \left\{\phi\in C(\bar D),\; \phi\circ T_P^{-1}\Big|_P\in
  \hat Q_P\text{ for all patches }P\in{\cal T}_{2h} \right\}, 
\]
where $T_P\in [\hat Q_P]^2$ is the {\lang (unique)} mapping between the reference patch
$\hat P=(0,1)^2$ and {\lang $P\in{\cal T}_{2h}$} such that
$T_P(\hat x_i) = x_i^P,\; i=1,\dots,9$
for the nine nodes $x_1^P,\dots,x_9^P$ of a patch. 
% The reference space $\hat Q_P$ is a piecewise
% polynomial space of degree 1, that will depend on whether a patch $P$ is
% cut by the interface or not. 
% For patches $P\in{\cal T}_{2h}$ 
% not cut by the boundary, we choose the standard space
% of piecewise bilinear functions 
The local space $\hat{Q}_P$ consists of piecewise linear finite elements on eight triangles, 
if it is cut by the boundary and of piecewise bi-linear finite elements on four quadrilaterals
if $P\cap \partial\Omega=\emptyset$. Note that in both 
cases the discrete functions are linear on edges, such that 
% 
% 
% \[
% \hat Q_P = \hat Q_\text{mod} := \left\{ \phi\in C(\bar P),\; \phi\Big|_{T_i}\in
% \operatorname{span}\{1,x,y\},\; T_1,\dots,T_8\in P\right\},
% \]
% in the second case a piecewise bilinear one:
% \[
% \hat Q_P = \hat Q := \left\{ \phi\in C(\bar P),\; \phi\Big|_{K_i}\in
% \operatorname{span}\{1,x,y,xy\},\; K_1,\dots,K_4\in P\right\}.  
% \]
% %  
% % Depending on the position of the boundary $\partial\Omega$ in the patch $P$,
% % three different reference configurations are
% % considered. For the details, we refer to~\cite{FreiRichter14}.
% 
% Note that the functions in $\hat Q$ and $\hat
% Q_\text{mod}$ are all piecewise linear on the edges $\partial P$, such that
mixing different element types does not affect the continuity of
the global finite element space. 
% We denote by 
% $\{\hat\phi^1,\dots,\hat\phi^9\}$ the standard Lagrange basis of
% $\hat Q$ or $\hat Q_\text{mod}$ with
% $\hat\phi^i(\hat{x}_j)=\delta_{ij}$. 
% The transformation $T_P$ is
% given by
% \[
% T_P(\hat{x})=\sum_{i=1}^9 x_i^P\hat\phi_i(\hat{x}). 
% \]
 
\begin{figure}[t]
  \begin{minipage}{0.5\textwidth}
  \centering
  \resizebox*{0.85\textwidth}{!}{
 \begin{picture}(0,0)%
\includegraphics{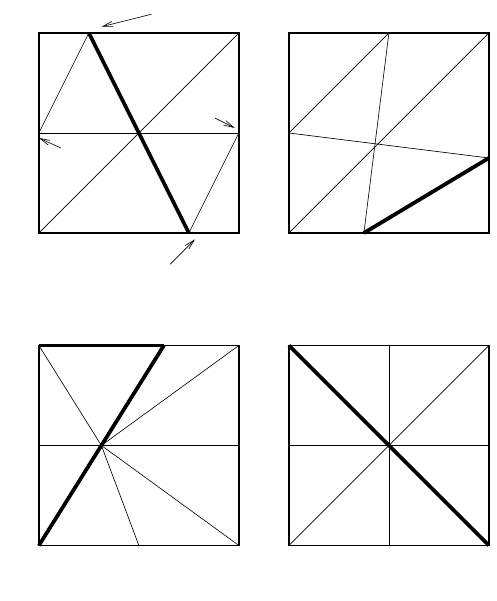}%
\end{picture}%
\setlength{\unitlength}{1579sp}%
\begingroup\makeatletter\ifx\SetFigFont\undefined%
\gdef\SetFigFont#1#2{%
  \fontsize{#1}{#2pt}%
  \selectfont}%
\fi\endgroup%
\begin{picture}(5955,7174)(736,-7175)
\put(1876,-3961){\makebox(0,0)[lb]{\smash{{\SetFigFont{6}{7.2}{\color[rgb]{0,0,0}$s$}%
}}}}
\put(5101,-7111){\makebox(0,0)[lb]{\smash{{\SetFigFont{5}{6.0}{\color[rgb]{0,0,0}$\textbf{D}$}%
}}}}
\put(2101,-7111){\makebox(0,0)[lb]{\smash{{\SetFigFont{5}{6.0}{\color[rgb]{0,0,0}$\textbf{C}$}%
}}}}
\put(2626,-136){\makebox(0,0)[lb]{\smash{{\SetFigFont{6}{7.2}{\color[rgb]{0,0,0}$e_3$}%
}}}}
\put(4351,-3061){\makebox(0,0)[lb]{\smash{{\SetFigFont{6}{7.2}{\color[rgb]{0,0,0}$r$}%
}}}}
\put(2626,-3361){\makebox(0,0)[lb]{\smash{{\SetFigFont{6}{7.2}{\color[rgb]{0,0,0}$e_1$}%
}}}}
\put(1351,-661){\makebox(0,0)[lb]{\smash{{\SetFigFont{6}{7.2}{\color[rgb]{0,0,0}$r$}%
}}}}
\put(1501,-1936){\makebox(0,0)[lb]{\smash{{\SetFigFont{6}{7.2}{\color[rgb]{0,0,0}$e_4$}%
}}}}
\put(1876,-2986){\makebox(0,0)[lb]{\smash{{\SetFigFont{6}{7.2}{\color[rgb]{0,0,0}$s$}%
}}}}
\put(2851,-1411){\makebox(0,0)[lb]{\smash{{\SetFigFont{6}{7.2}{\color[rgb]{0,0,0}$e_2$}%
}}}}
\put(6676,-2536){\makebox(0,0)[lb]{\smash{{\SetFigFont{6}{7.2}{\color[rgb]{0,0,0}$s$}%
}}}}
\put(2101,-3361){\makebox(0,0)[lb]{\smash{{\SetFigFont{5}{6.0}{\color[rgb]{0,0,0}$\textbf{A}$}%
}}}}
\put(5101,-3361){\makebox(0,0)[lb]{\smash{{\SetFigFont{5}{6.0}{\color[rgb]{0,0,0}$\textbf{B}$}%
}}}}
\end{picture}%
  }
  \end{minipage}
  \hfil
  \begin{minipage}{0.5\textwidth}
  \centering
  \resizebox*{0.7\textwidth}{!}{
  \begin{picture}(0,0)%
\includegraphics{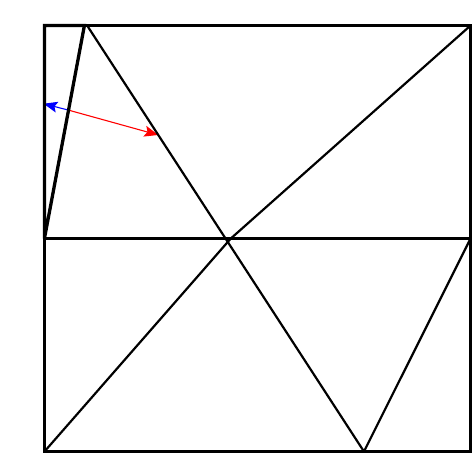}%
\end{picture}%
\setlength{\unitlength}{1381sp}%
\begingroup\makeatletter\ifx\SetFigFont\undefined%
\gdef\SetFigFont#1#2{%
  \fontsize{#1}{#2pt}%
  \selectfont}%
\fi\endgroup%
\begin{picture}(6496,6209)(736,-6412)
\put(1472,-342){\makebox(0,0)[lb]{\smash{{\SetFigFont{6}{7.2}{\color[rgb]{0,0,0}$r$}%
}}}}
\put(1610,-2797){\makebox(0,0)[lb]{\smash{{\SetFigFont{6}{7.2}{\color[rgb]{0,0,0}$e$}%
}}}}
\put(1867,-2368){\makebox(0,0)[lb]{\smash{{\SetFigFont{6}{7.2}{\color[rgb]{1,0,0}$h_n^2$}%
}}}}
\put(751,-1681){\makebox(0,0)[lb]{\smash{{\SetFigFont{6}{7.2}{\color[rgb]{0,0,1}$h_n^1$}%
}}}}
\end{picture}%
}
  \end{minipage}
  \caption{\textit{Left}: Four different types of cut patches A-D. The subdivision can be
    anisotropic with $r,s\in (0,1)$ arbitrary. \textit{Right:} Example for an edge $e$ with
    very different cell sizes $h_n^1, h_n^2$ in normal direction (type A and $r\to 0$).}\label{fig:types}
\end{figure}
% There are four different possibilities how a patch can be cut by the boundary $\partial\Omega$:
% The cut can go through opposite (A) or adjacent (B) edges or through one (C) or two (D) vertices. 
% We illustrate how to deal with the four cases in Figure~\ref{fig:types}. As mentioned previously, we place
% a vertex $e_i$ to the intersection point(s) $\partial P\cap \partial\Omega$. The midpoint is placed to the
% intersection of the line connecting $e_2$ and $e_4$ and the line connecting $e_1$ and $e_3$ in cases A,B and D
% and to the intersection of the line connecting $e_2$ and $e_4$ and the line connecting $x_1$ and $e_3$ in case C. 

As the cut of the elements can be arbitrary with $r,s\to 0$ or $r,s\to
1$, the triangle's aspect ratio can be very large, considering  $h\to
0$ it is not necessarily bounded. Moreover, the cell size of neighbouring cells
can vary almost arbitrarily in the direction normal to the edge that is shared, 
see the right sketch of Figure~\ref{fig:types} for an example.
We can however guarantee, that the 
maximum angles in all triangles will be bounded away from
$180^\circ$:
\begin{lemma}[Maximum angle condition]\label{lemma:maxangle}
  All interior angles of the triangles shown in
  Figure~\ref{fig:types} are bounded by $144^\circ$ independent of
  $r,s\in (0,1)$.
\end{lemma}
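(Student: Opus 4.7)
The argument is purely elementary and case-based, so the plan is direct: check every triangle in every one of the four configurations. In each of the patch types A--D the reference patch $\hat P = (0,1)^2$ is partitioned into exactly eight triangles whose vertices are explicit affine functions of the cut parameters $r,s \in (0,1)$. I would begin by listing these vertex triples, then use the natural reflective symmetries (about the diagonals and midlines of $\hat P$) to collapse the 32 triangles into a much smaller number of genuinely distinct cases.

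For each distinct triangle with vertices $A(r,s), B(r,s), C(r,s)$, the interior angle at $A$ is given by
\[
\cos\theta_A = \frac{(B-A)\cdot(C-A)}{|B-A|\,|C-A|},
\]
which is a closed-form rational function of $r$ and $s$. The aim is to show $\cos\theta \geq \cos 144^\circ$ uniformly on $(r,s) \in (0,1)^2$. Since the vertices are affine in the parameters, maximising the angle reduces to elementary calculus: one examines the interior critical points of each angle function and the edges of the parameter square $[0,1]^2$. The extremal configurations lie at the corners of this square (as $r$ or $s$ tends to $0$ or $1$), where the triangles degenerate in aspect ratio but remain non-degenerate in shape, and it is in those limits that the supremum of the interior angles is attained.

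The main obstacle is not analytic but combinatorial: one must enumerate every triangle in every configuration correctly and ensure that no geometry has been overlooked, in particular that no angle can approach $180^\circ$. The value $144^\circ$ is tight; a direct computation for the critical triangle in the worst case (most naturally the thin triangle sharing a vertex with $e_i$ in a configuration like type A as $r \to 0$) produces exactly this bound, while all remaining triangles admit crude upper bounds strictly below. It is precisely the choice of the subdivision patterns depicted in Figure~\ref{fig:types}, rather than a naive midpoint-based refinement, that guarantees this uniform maximum-angle bound regardless of $r$ and $s$.
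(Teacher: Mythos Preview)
The paper does not actually prove this lemma; it simply cites Frei \& Richter (2014) for the argument. Your proposal therefore cannot be compared against a proof in this paper, only against the cited reference. Your outline---enumerate the eight triangles in each of the four patch types, exploit the symmetries of the unit square to reduce the case list, write each interior angle as an explicit function of $(r,s)$, and optimise over the closed parameter square---is the natural direct approach and is essentially what the original reference carries out.

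Two small cautions. First, your assertion that ``the extremal configurations lie at the corners of this square'' is plausible but not automatic: the angle functions are not monotone in $r$ and $s$ in any obvious global sense, so one really must check interior critical points (or argue monotonicity along one-parameter slices) rather than simply evaluate at the four corners. Second, your claim that $144^\circ$ is tight and attained only in a degenerate limit is correct in spirit, but since the limit triangles are degenerate, the supremum is not attained on the open square $(0,1)^2$; the statement of the lemma is a strict upper bound, and you should phrase the conclusion accordingly.
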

\begin{proof}
 See Frei \& Richter~\cite{FreiRichter14}.
\end{proof}

% The subdivision of the patches $P \in {\cal T}_{2h}$ in cells $T_i\subset P$
% specifies a triangulation ${\cal T}_h$ and a discrete domain $\Omega_h$ that contains all 
% the cells $K$ that have at least one
% vertex in the interior of $\Omega$. Using this discrete domain, we define the following
% finite element spaces
% \begin{align*}
% V_h^{r,0} &= X_h\cap H^1_0(\Omega_h; \Gamma_h^d),\quad V_h 
% =\left\{\phi \in X_h\,\Big|\,\phi=0 \text{ in vertices } x_i\notin \overline{\Omega}_h \right\}.
% \end{align*}
% We will use $V_h^{r,0}$ for the velocities and $V_h$ for the pressure ansatz space. 
% It is well-known that for such \textit{equal-order elements}
% an \textit{inf-sup} condition does not hold. 
% To cope with the lack of \textit{inf-sup stability} of this \textit{equal-order elements},
% we will add stabilisation terms to the variational formulation that are based on penalising pressure gradients or jumps of
% pressure gradients over element edges.

Although we formulate the approach as a parametric approach on 
the patch mesh ${\cal T}_{2h}$, it is obvious that
the discretisation is equivalent to a mixed linear-bilinear discretisation on a finer 
mesh ${\cal T}_h$ that consists
of the sub-triangles and sub-quadrilaterals. With the help of the maximum angle condition, 
the following interpolation
estimate is well-known for the nodal interpolant $I_h u$
\begin{align*}
 \|u-I_h u\|_{L^2(\Omega)} + H \|u-I_h u\|_{H^1(\Omega)} \leq CH^2\|\nabla^2 u\|_\Omega,
\end{align*}
where $H$ denotes the {\lang maximum} element size of the (regular) patch grid~\cite{FreiRichter14}. An $H^1$-stable operator is given
by the Ritz projection, see Section~\ref{sec.H1stab}.

% One of the most popular methods for flows on moving domains is the Arbitrary Lagrangian 
% Eulerian (ALE)~(Donea et al.~\cite{Donea1977}, 
% Hughes et al.~\cite{Hughes1981}). The drawback of this approach is that the treatment of large movements 
% of the domain and changes in topology, 
% which are of interest in e.g.$\,$multi-phase flow and fluid-structure interactions, become cumbersome or even impossible.
% 
% Alternative methods that allow for topology changes are based on Nitsche's method~\cite{Nitsche70} (e.g. Hansbo \& Hansbo~\cite{HansboHansbo2002}, 
% Burman et al.~\cite{BurmanHansbo,CutFEM}, Reusken \& Nguyen~\cite{ReuskenNguyen})
% or on the Fictitious domain approach (Glowinski et al.~\cite{Glowinskietal_FDLM}, Legay et al.~\cite{LegayChessaBelytschko2006}). 
% The advantage of the locally modified FEM lies in its simplicity, as no
% additional degrees of freedom or non-standard basis functions have to be introduced.
% 
% An extension of the approach to 3 dimensions is in principle possible. Due to the technical
% details regarding the multitude of different cuts in 3 dimensions, it is, however, subject 
% to future research.
% 
% The challenge for pressure stabilisation for the locally modified FEM 
% lies in the anisotropies that are present in the boundary region including abrupt changes of
% anisotropy between neighbouring cells. 

\subsection{Abstract setting and assumptions}
\label{sec.discgeneral}

We define a family of triangulations $\left({\cal T}_h\right)_{h>0}$ of the convex polygonal
domain $\Omega\subset\mathbb{R}^2$ into open triangles or quadrilaterals, such that the 
boundary $\partial\Omega$ is exactly resolved for all $h$. Motivated by the \textit{locally
modified finite element method}, we allow for mixed triangular-quadrilateral meshes. 
We remark that the restriction to
two dimensions is only made to simplify the presentation. The approach {\lang presented} here has a natural 
generalisation to three space dimensions and the theoretical analysis provided below 
generalises without significant differences. Moreover, the theoretical results can also be 
generalised to smooth domains $\Omega$, that are not necessarily convex. We will comment on
both generalisations in remarks.

We will write ${\cal X}_h$ for the set of
vertices, ${\cal E}_h$ for the set of edges
and ${\cal T}_h$ for the set of cells.
We assume that each triangulation ${\cal T}_h$ can be split into a part ${\cal T}_h^0$
and a part ${\cal T}_h^{\text{aniso}}$, such that the triangulation is quasi-uniform in 
$({\cal T}_h^0)_{h>0}$ in the usual sense, see for example~\cite{GrossmannRoos}. In 
particular, this includes a minimum 
and maximum angle condition for each element $K\in{\cal T}_h^0$ and the size of all edges belonging
to elements $K\in{\cal T}_h^0$ are of the same order of magnitude.
In ${\cal T}_h^{\text{aniso}}$ anisotropic cells are allowed. We relax the assumption 
of shape-regularity and assume a maximum angle condition only. We assume, however, that the maximum number
of neighbouring cells to a vertex in ${\cal T}_h$ is bounded independently of $h$.

We use the notation
$\Omega_h^0$ and $\Omega_h^{\text{aniso}}$ for
the region spanned by cells $K\in {\cal T}_h^0$ and $K\in {\cal T}_h^{\text{aniso}}$, 
respectively. 
Furthermore, we also split the set of faces into two parts: 
By ${\cal E}_h^0$, we denote all edges $e\in {\cal E}_h$ that lie between two regular cells
$K_1,K_2 \in {\cal T}_h^0$. By ${\cal E}_h^{\text{aniso}}$ we denote the edges that are edges of 
at least one element $K\in {\cal T}_h^{\text{aniso}}$. 
We denote that maximum size of an edge in ${\cal T}_h$ by
\begin{align*}
 H:= \max_{e \in {\cal E}_h} |e|.
\end{align*}
For the error analysis in Section~\ref{sec.apriori}, we will assume
that the area of the anisotropic part of the triangulation decreases linearly with $H$
\begin{align}
 |\Omega_h^{\text{aniso}}| = {\cal O}(H).\label{assAniso}
\end{align}
This will allow us to improve the optimal convergence order by a factor {\lang of order} ${\cal O}(H^{1/2})$.
In the case of the locally modified finite element method, we use ${\cal T}_h^0$ for all
cells of patches not cut by the interface, and ${\cal T}_h^{\text{aniso}}$ for all cells of 
patches that are cut by the interface. Assumption (\ref{assAniso}) follows then by the 
regularity of the patch mesh.

Let us now introduce the finite element spaces. By $P_r(\hat{K})$ and $Q_r(\hat{K})$ we denote
the usual polynomial spaces of degree $r$ on a reference
element $\hat{K}$. We define the spaces
\begin{align*}
V_h^r := \left\{\phi\in C(\Omega) \; \Big|\; (\phi\circ T^{-1})\in
  {\cal P}_K^r(\hat{K})\; \text{for } K \in{\cal T}_h\right\}, 
  \quad V_h^{r,0} := \left\{\phi\in V_h^r, \, \phi=0 \text{ on } \Gamma^d\right\},
\end{align*}
where $T\in {\cal P}_K^1(\hat{K})$ is a transformation from the reference element $\hat{K}$
to $K$ and 
\begin{align*}
{\cal P}_K^r(\hat{K}) := \begin{cases}
                        P_r(\hat{K}), \quad &K \text{ is a triangle},\\
                        Q_r(\hat{K}), &K \text{ is a quadrilateral}.
                        \end{cases}
\end{align*}

We restrict the analysis for simplicity to the case $r\leq 3$. Higher-order polynomials can be handled
as well, but as the approximation order
will be limited by the non-consistency of the stability term, they are not {\lang of interest} for the 
method presented here. 

Finally, we assume that the finite element space is spanned by a Lagrangian basis, i.e. there
{\lang exists} a set of Lagrange nodes ${\cal X}_h^L$ with ${\cal X}_h \subset {\cal X}_h^L$, such that  
each function in $v_h\in V_h^r$ can be represented as 
\begin{align*}
 v_h = \sum_{x_i\in {\cal X}_h^L} v_h(x_i) \phi_i,
\end{align*}
and the basis functions are defined via the relation $\phi_i(x_j)=\delta_{ij} (i,j=1,...,|{\cal X}_h^L|)$.

% \paragraph{$L^2$-projection}
% 
% Using the approximation results for the Ritz projection, we can show similar results for 
% the $L^2$-projection $\pi_h: H^1_0(\Omega;\Gamma^d) \to V_h^{r,0}$ defined by
% \begin{align}
%  (v-\pi_h v,\phi_h)_{\Omega} = 0\quad \forall \phi_h \in V_h^{r,0}\label{defL2Proj}
% \end{align}
% \begin{lemma}
%  \label{lem.L2Proj}
%  Let $\Omega$ be a domain with a $C^{1,1}$-parametrised boundary that is approximated with at least
%  approximation order 1. For $u\in H^1_0(\Omega;\Gamma^d)$, we have the estimate
%  \begin{align*}
%   \|u-\pi_h u\|_{\Omega_h} \le CH^r \|u\|_{H^r(\Omega)}.
%  \end{align*}
% %  If $\Omega$ has a $C^{r,1}$-parametrised boundary, that is approximated with
% %  approximation order $r\geq1$ in the finite element space $V_h^r$, it holds for $j=0,1$ that
% %  \begin{align}\label{RitzH2}
% %  \|\nabla^j (u-R_h u) \|_{\Omega} \le CH^{r+1-j} \|u\|_{H^{r+1}(\Omega)}.
% %  \end{align}
% \end{lemma}
% \begin{proof}
%  We use the definition of the $L^2$-projection to insert $\phi_h=R_h u$
%  \begin{align*}
%   (u-\pi_h u,u-\pi_h u)_{\Omega} = (u-\pi_h u,u-R_h u)_{\Omega} 
%   \leq \|u-\pi_h u\|_{\Omega} \|u-R_h u\|_{\Omega}.
%  \end{align*}
% The approximation results in Lemma~\ref{lem.RitzProj} complete the proof. 
% \end{proof}

% For the nodal interpolant $I_h u$, we have that ({\sfrei brauchen wir das? Oder nur Ritz?}
% \begin{align*}
%  \|u-I_h u\|_{L^2(\Omega)} + H \|u-I_h u\|_{H^1(\Omega)} \leq CH^2\|\nabla^2 u\|_\Omega,
% \end{align*}
% where $H$ denotes the element size of the (regular) patch grid.

\section{Pressure stabilisation}
\label{sec.press}

The continuous variational formulation for the Stokes problem reads:
\textit{Find $v \in {\cal V}:=H^1_0(\Omega;\Gamma^d), p \in {\cal L} := L^2(\Omega)$ such that} 
\begin{align}
 A(v,p)(\phi,\psi)  &= (f,\phi)_{\Omega}\quad \forall \phi \in {\cal V}, \psi \in {\cal L},\label{ContStokes}
\end{align}
where
\begin{align*}
 A(v,p)(\phi,\psi):&= \nu (\nabla v,\nabla \phi)_{\Omega} - (p,\text{div } \phi)_{\Omega} + (\text{div } v,\psi)_{\Omega}.
\end{align*}

\subsection{Stabilisation}

{\sfrei 
For the discrete problem, we will use an edge-based pressure stabilisation technique. The standard continuous interior penalty technique 
for pressure stabilisation is given by
\begin{align}\label{defStandard}
 S_{cip}(p_h, \psi_h) = \gamma \sum_{e \in {\cal E}_h} h_{\tau}^3 \int_e [\partial_n p_h] [\partial_n \psi_h]\, do,
\end{align}
where $h_{\tau}$ denotes the length of an edge $e$ and $[\cdot]_e$ the jump operator across the edge $e$.
Heuristically, the weighting $h_e^3$ can be explained by the following observations.

Roughly speaking, in isotropic cells a factor $h^2$ is needed to compensate 
the normal derivatives $\partial_n \psi_h$ and $\partial_n p_h$, which grow with order ${\cal O}(h^{-1})$ when the cell size
$h$ gets small. In anisotropic cells, these derivatives grow with ${\cal O}(h_n^{-1})$ depending on the 
mesh size $h_n$ in direction normal to $e$. This motivates the choice $h_n^2$ instead of $h_{\tau}^2$
in an anisotropic context, 
see for example Braack \& Richter in the context of the LPS method~\cite{BraackRichter2005Enumath}. 
The third factor $h_{\tau}$ in (\ref{defStandard}) leads
in combination with the surface element of the integral (which is of order $h_{\tau}$) to a scaling with the cell size $K$
in isotropic cells. In the case of 
anisotropic cells, this factor $h_{\tau}$ should again be replaced with $h_n$.

These heuristic considerations motivate a stabilisation
\begin{align*}
 \tilde{S}_{cip}(p_h, \psi_h) = \gamma \sum_{e \in {\cal E}_h} h_n^3 \int_e [\partial_n p_h] [\partial_n \psi_h]\, do
\end{align*}
on anisotropic meshes.
As we allow abrupt changes in anisotropy in $\Omega_h^{\text{aniso}}$, $h_n$ can however vary strongly between neighbouring cells
and is therefore not well-defined on an edge $e$, see the right sketch in Figure 2.
Therefore, the stabilisation $\tilde{S}_{cip}$ can not 
be used in $\Omega_h^{\text{aniso}}$. Instead, we will use an average of the pressure gradients in the the anisotropic cells.
% 
% 
% where $h_{\tau}$ is the length of an edge $e$.
% 
% Roughly speaking, a factor $h_{\tau}^2$ is needed to compensate 
% the normal derivatives $\partial_n \psi_h$ and $\partial_n p_h$, which grow with order ${\cal O}(h_n^{-1})$ when the cell size in 
% normal direction $h_n$ gets small. In the case of anisotropic mesh cells, this motivates the choice $h_n^2$ instead of $h_{\tau}^2$, 
% see for example Braack \& Richter in the context of the LPS method~\cite{BraackRichter2005Enumath}. The third factor $h_{\tau}$ leads
% together with the surface element of the integral (which is of order $h_{\tau}$) to a scaling with the cell size $K$. In the case of 
% anisotropic mesh cells, this heuristic argument suggest again a substitution of $h_{\tau}$ with $h_n$.
% 
% As we allow abrupt changes in anisotropy in $\Omega_h^{\text{aniso}}$, $h_n$ can however vary strongly between neighbouring cells
% and is therefore not well-defined on an edge $e$, see the right sketch in Figure~\ref{fig:types}.
% Therefore, the standard Continuous Interior Penalty method can not 
% be used in $\Omega_h^{\text{aniso}}$. Instead, we will use an average of the pressure gradients in the these cells.

Precisely,} we define the stabilisation term by
\begin{align}
 S(p_h,\psi_h) := \gamma H^2 \Big(\sum_{e \in {\cal E}_h^{\text{aniso}}} 
 \int_e \{h_n \nabla p_h\cdot \nabla \psi_h\}_e \, do
 + \sum_{e \in {\cal E}_h^0} \int_e \{h_n\}_e [\nabla p_h]_e\cdot [\nabla \psi_h]_e \, do\Big),\label{defedgestab}
\end{align}
where $\gamma>0$ is a constant, $h_{n|K}:=\frac{|K|}{|e|}$ is the cell size in the direction normal 
to $e$ and
\begin{align*}
\{v_h\}_e:=\begin{cases}
 \frac{1}{2} \left(v_{h|K_1} + v_{h|K_2}\right), &e \not\subset \partial\Omega,\\
             v_{h|K_1}, &e \subset \partial\Omega.
           \end{cases}
\end{align*}
is the mean value {\lang of} the two cells $K_1,K_2$ sharing the edge $e$. {\sfrei
A mathematically more rigorous 
motivation for the choice of weights $H$ and $h_n$ and 
the averages instead of the jumps will be given within the error analysis in Section~\ref{sec.apriori}. 
Moreover, we will substantiate this analysis 
numerically by a comparison of the different variants in Section~\ref{sec.motstab}.
}

For later reference, we will denote the cell-wise contribution of an element $K\in{\cal T}_h$ by
\begin{align*}
 S_K(p_h,\psi_h) := \frac{\gamma}{2}H^2 
 \Big(\sum_{e\in {\cal E}_h^{\text{aniso}}, e\subset \partial K} 
 \int_e h_{n|K} \nabla p_{h|K}\cdot \nabla \psi_{h|K} \, do\; 
 + \sum_{e\in {\cal E}_h^0, e\subset \partial K} \int_e \{h_n\}_e [\nabla p_h]_e\cdot [\nabla \psi_h]_e \, do\Big).
\end{align*}
and the sum of the contributions from ``anisotropic'' and ``regular'' edges by
\begin{align*}
 S_h^{\text{aniso}}(p_h,\psi_h) 
 &:= \gamma H^2 \sum_{e \in {\cal E}_h^{\text{aniso}}} 
 \int_e \{h_n \nabla p_h\cdot \nabla \psi_h\}_e \, do,\\
 S_h^0(p_h,\psi_h) &:= \gamma H^2 \sum_{e \in {\cal E}_h^0} \int_e \{h_n\}_e [\nabla p_h]_e\cdot [\nabla \psi_h]_e \, do.
\end{align*}

\noindent The discrete formulation for the Stokes problem reads:
\textit{Find $v_h \in {\cal V}_h := \left(V_h^{r,0}\right)^2, p_h \in {\cal L}_h := V_h^r$ such that} 
\begin{align}
 A(v_h,p_h)(\phi_h,\psi_h) + S(p_h,\psi_h) &= (f,\phi_h)_{\Omega}\quad \forall \phi_h \in {\cal V}_h, \psi_h \in {\cal L}_h,\label{DiscStokes}
\end{align}
where
\begin{align*}
 A(v_h,p_h)(\phi_h,\psi_h):&= \nu (\nabla v_h,\nabla \phi_h)_{\Omega} - (p_h,\text{div } \phi_h)_{\Omega} + (\text{div } v_h,\psi_h)_{\Omega}.
\end{align*}

\subsection{A projection operator for the discrete pressure gradient}

Next, we introduce a projection that will be needed for the discontinuous gradient of $p_h$.
We denote the space of discontinuous functions of polynomial degree $r$ by
\[
V_h^{r,\text{dc}} = \left\{\phi:\Omega \to \mathbb{R}^2 \; \Big|\; (\phi\circ T^{-1})|_K\in
  P_r(\hat{K})\; \text{for } K \in {\cal T}_h \right\}. 
\]
Note that the gradient of a function $p_h \in V_h^r$ lies in $\left(V_h^{r,\text{dc}}\right)^2$. 

We define a projection 
$\tau_h: V_h^{r,\text{dc}} \to V_h^{r,0}$.
As $V_h^{r,0}$ is spanned by a Lagrangian basis, it is enough to specify the value 
of the projection $\tau_h v_h$ of $v_h\in V_h^{r,\text{dc}}$ 
in every Lagrange point $x_i\in {\cal X}_h^L$. 
A function $v_h \in V_h^{r,\text{dc}}$ might be
discontinuous in $x_i$, however, and thus a value $v_h(x_i)$ is not well-defined. 
Instead, we choose from the 
values $v_{h}|_K(x_i)$ of the cells $K$ surrounding $x_i$. 

\begin{figure}
 \centering
\begin{picture}(0,0)%
\includegraphics{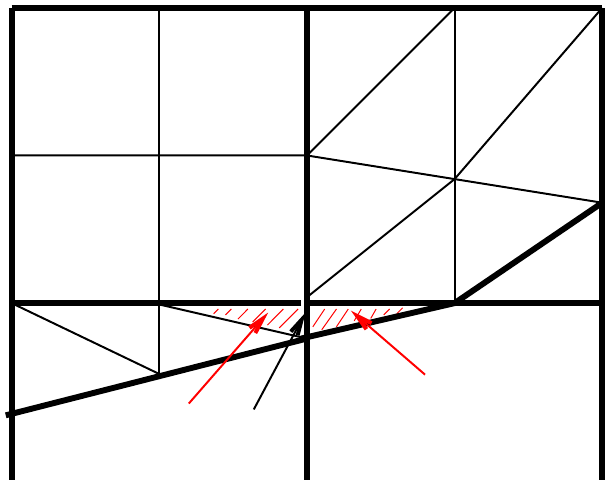}%
\end{picture}%
\setlength{\unitlength}{2486sp}%
\begingroup\makeatletter\ifx\SetFigFont\undefined%
\gdef\SetFigFont#1#2{%
  \fontsize{#1}{#2pt}%
  \selectfont}%
\fi\endgroup%
\begin{picture}(4633,3688)(2162,-5055)
\put(3916,-4741){\makebox(0,0)[lb]{\smash{{\SetFigFont{11}{13.2}{\color[rgb]{0,0,0}$\eta_{K^*}$}%
}}}}
\put(5491,-4381){\makebox(0,0)[lb]{\smash{{\SetFigFont{11}{13.2}{\color[rgb]{1,0,0}$K_2^*$}%
}}}}
% \put(5986,-4696){\makebox(0,0)[lb]{\smash{{\SetFigFont{11}{13.2}{\color[rgb]{0,0,0}$\Omega_s$}%
% }}}}
\put(2656,-2041){\makebox(0,0)[lb]{\smash{{\SetFigFont{7}{8.4}{\color[rgb]{0,0,0}$\Omega$}%
}}}}
\put(3331,-4696){\makebox(0,0)[lb]{\smash{{\SetFigFont{11}{13.2}{\color[rgb]{1,0,0}$K_1^*$}%
}}}}
\put(3061,-3526){\makebox(0,0)[lb]{\smash{{\SetFigFont{11}{13.2}{\color[rgb]{0,0,0}$x_1$}%
}}}}
\put(4141,-3526){\makebox(0,0)[lb]{\smash{{\SetFigFont{11}{13.2}{\color[rgb]{0,0,0}$x_2$}%
}}}}
\end{picture}%

 \caption{\label{fig.projection}Cells $K_1^*, K_2^*$ corresponding to grid points 
 $x_1, x_2$ for the construction of the projection 
 $\tau_h$. In both $x_1$ and $x_2$ the shortest edge of the surrounding cells is the 
 edge $\eta_{K^*}$. While in $x_1$
 the choice of the cell $K_1^*$ is uniquely determined, we can choose either $K_1^*$ or $K_2^*$ in $x_2$.}
\end{figure}

Before we do this, let us introduce some notation. 
Let $\eta_{K,\min}$ be the shortest edge of a cell $K$. We denote its length by 
$h_{K,\min} = |\eta_{K,\min}|$. Moreover, we define the piece-wise constant function
\begin{align*}
 \tilde{h}_{\min|K} := \begin{cases}
                             h_{K,\min} \quad &K\in {\cal T}_h^{\text{aniso}},\\
                             H \quad &K\in {\cal T}_h^0,
                            \end{cases}
\end{align*}
which is approximately the length of the shortest edge of a cell $K$.
In a cell $K\in {\cal T}_h^0$ the minimal cell size $h_{K,\min}$ is
not necessarily equal to $H$, but of the same order of magnitude by assumption.

In $x_i$, we choose the value $v_{h}|_{K_i^*}(x_i)$ 
of a cell $K_i^*$ that possesses the smallest edge of the 
surrounding cells (in the sense of $\tilde{h}_{\min}$), 
see Figure~\ref{fig.projection} for an illustration.
The reason to use $\tilde{h}_{\min}$ instead of $h_{\min}$ is to give preference
to cells $K\in {\cal T}_h^{\text{aniso}}$. Precisely, we define
\begin{align}\label{dcProj}
 \tau_h v_h(x_i) = \begin{cases}
                    v_{h|K_i^*} (x_i) \quad &x_i\notin \partial\Omega,\\   
                    0 \quad &x_i\in \partial\Omega
                 \end{cases}
 \qquad \text{where } K_i^* = \underset{K\in {\cal T}_h,x_i\in \overline{K}}{\operatorname{argmin}} 
 \tilde{h}_{\min|K}.
\end{align}
%{In the case that $h_{K,\min}:=H$ and} 
{\lang If} this choice is not unique, we choose the value of
a cell $K\in {\cal T}_h^{\text{aniso}}$ if the vertex $x_i$ belongs to any. Otherwise we can pick
any of the cells.

% Moreover, to abbreviate the notation we define an operator 
% $H^{\min}: V_h^{r,\text{dc}} \to \left(V_h^{r,\text{dc}}\right)^2$
% by setting locally in each cell $K\in {\cal T}_h$
% \begin{align}
% H^{\min} p_{h|K} = \sum_{i=1}^n h_{\min}(x_i) \nabla p_{h|K}(x_i) \phi_i(x)\label{defHmin}
% \end{align}
%  where $n=3$ in the case of a triangle and $n=4$ in the case of a quadrilateral,
%  $x_i, i=1,...,n$ denote the vertices of $K$ and $\phi_i$ the corresponding Lagrangian
%  basis functions in $K$. Moreover, 
%  \begin{align*}
%  h_{\min}(x_i) := \argmin_{K, x_i \in \overline{K}} |\eta_{K,\min}|
%  \end{align*}
% is the minimal length of any of the edges of cells adjacent to $x_i$.

We have the following stability result for the projection $\tau_h$:
\begin{lemma}
 Let $p_h \in V_h$ and $\tau_h$ the projection operator defined in (\ref{dcProj}).  
 It holds that 
 \begin{align}
  \left\| \nabla \tau_h \left(\tilde{h}_{\min}^2 p_h\right) \right\|_{\Omega} \leq CH \|\nabla p_h\|_{\Omega}, \label{stabtauh}
 \end{align}
 where $C$ is a constant that is
 independent of the position of the boundary.
\end{lemma}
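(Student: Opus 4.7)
The plan is to work cell by cell. On each $K \in \mathcal{T}_h$, write $w := \tilde{h}_{\min}^2 p_h$ and use the Lagrangian representation
\[
\tau_h w\big|_K \;=\; \sum_{x_i \in \bar K \cap \mathcal{X}_h^L} \tilde{h}_{\min|K_i^*}^2\, p_h(x_i)\, \phi_i\big|_K,
\]
where, by the defining minimality of $K_i^*$, we have $\tilde{h}_{\min|K_i^*} \leq \tilde{h}_{\min|K}$ for every node $x_i \in \bar K$. Differentiating and subtracting the Lagrangian expansion of $\tilde{h}_{\min|K}^2 \nabla p_h\big|_K$ yields
\[
\nabla \tau_h w\big|_K - \tilde{h}_{\min|K}^2 \nabla p_h\big|_K \;=\; \sum_i \bigl(\tilde{h}_{\min|K_i^*}^2 - \tilde{h}_{\min|K}^2\bigr)\, p_h(x_i)\, \nabla \phi_i\big|_K,
\]
which carries the coefficient $|\tilde{h}_{\min|K_i^*}^2 - \tilde{h}_{\min|K}^2| \leq \tilde{h}_{\min|K}^2 \leq H^2$ in front of each summand; what remains is to trade $|p_h(x_i)|$ for a gradient quantity.

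This is the heart of the proof. Using the partition-of-unity identity $\sum_i \nabla \phi_i\big|_K \equiv 0$, I split $p_h(x_i) = (p_h(x_i) - \bar p_K) + \bar p_K$ with $\bar p_K := |K|^{-1}\int_K p_h$. The first part produces contributions bounded via the convex-domain Poincar\'e inequality $|p_h(x_i) - \bar p_K| \leq C|K|^{-1/2}\mathrm{diam}(K)\|\nabla p_h\|_K \leq CH|K|^{-1/2}\|\nabla p_h\|_K$ (combined with the $L^\infty$--$L^2$ polynomial norm equivalence, which is invariant under pullback to the fixed reference element); the second part generates a residue $\bar p_K\, \nabla \tilde g_{h,K}\big|_K$ with the auxiliary polynomial $\tilde g_{h,K} := \sum_i \tilde h_{\min|K_i^*}^2 \phi_i$, which is handled separately by an inverse estimate on $\tilde g_{h,K}$ together with the bound $|\tilde h_{\min|K_i^*}^2 - \tilde h_{\min|K}^2| \leq \tilde h_{\min|K}^2$.

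Combining the three factors -- the $\tilde h$-jump bound, the Poincar\'e estimate, and the anisotropic inverse estimate $\|\nabla \phi_i\|_K \leq C h_{K,\min}^{-1}|K|^{1/2}$ valid under the maximum angle condition of Lemma~\ref{lemma:maxangle} -- each summand contributes at most $C\tilde{h}_{\min|K}^2\, H\, h_{K,\min}^{-1}\,\|\nabla p_h\|_K$. The decisive geometric step is a short case distinction showing $\tilde{h}_{\min|K}^2/h_{K,\min} \leq CH$: for $K \in \mathcal{T}_h^{\text{aniso}}$ one has $\tilde{h}_{\min|K} = h_{K,\min} \leq H$, so the ratio equals $h_{K,\min}$; for $K \in \mathcal{T}_h^0$ one has $\tilde{h}_{\min|K} = H$ together with the quasi-uniformity $h_{K,\min} \sim H$. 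Summing over the bounded number of nodal contributions and adding the trivial bound $\|\tilde{h}_{\min|K}^2\nabla p_h\|_K \leq H^2\|\nabla p_h\|_K$ produces the cell-wise estimate $\|\nabla \tau_h w\|_K \leq CH^2\|\nabla p_h\|_K$, and squaring and summing over $K$ yields the claim.

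The main technical obstacle is ensuring every constant is independent of the boundary position, i.e.\ of the aspect ratios of the cells in $\mathcal{T}_h^{\text{aniso}}$. Three ingredients must be shown to be aspect-ratio-robust: the Poincar\'e constant on $K$ is uniform because it is bounded by $\mathrm{diam}(K)/\pi \leq CH$ using convexity of each element; the polynomial norm equivalence transfers from the fixed reference element since the $L^\infty$ norm is invariant under affine/bilinear pullback and the $L^2$ norm only picks up the Jacobian determinant; and the inverse estimate on $\nabla \phi_i$ is the standard version available under the maximum angle condition. A secondary subtle point is the careful control of the residue $\bar p_K \nabla\tilde g_{h,K}$ introduced by the mean-subtraction step, which requires a short parallel argument using the same three tools.
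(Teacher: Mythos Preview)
There is a genuine gap, rooted in a typo in the lemma statement. Both the paper's own proof (which writes $w_h:=\tilde h_{\min}^2 p_h$ but then computes $|w_{h|K_i^*}(x_i)|^2=\tilde h_{\min|K_i^*}^4|\nabla p_{h|K_i^*}(x_i)|^2$) and the only place the estimate is used (Theorem~\ref{theo.StabAnisoSZ}, where one tests with $\tau_h(\tilde h_{\min}^2\nabla p_h)$) make clear that the intended object is $w_h=\tilde h_{\min}^2\nabla p_h$, not $\tilde h_{\min}^2 p_h$. Taken literally, the statement you set out to prove is false: for $p_h\equiv c\neq 0$ the right-hand side vanishes, while $\tau_h(\tilde h_{\min}^2 c)$ has nodal values $c\,\tilde h_{\min|K_i^*}^2$ at interior Lagrange points and $0$ on $\partial\Omega$, hence is nonconstant. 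Your argument breaks exactly where this counterexample bites: the residue $\bar p_K\,\nabla\tilde g_{h,K}$. The inverse estimate you invoke does give $\|\nabla\tilde g_{h,K}\|_K\le CH\,|K|^{1/2}$, but the remaining factor $|\bar p_K|\,|K|^{1/2}$ is of order $\|p_h\|_K$, which cannot be bounded by $\|\nabla p_h\|_K$; so the claim that this term is ``handled separately'' is incorrect.

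For the intended statement the paper's proof is much shorter and avoids your Poincar\'e/partition-of-unity detour entirely. Because $w_{h|K_i^*}(x_i)=\tilde h_{\min|K_i^*}^2\,\nabla p_{h|K_i^*}(x_i)$ already carries a gradient of $p_h$, one simply applies the anisotropic inverse inequality $\|\nabla\tau_h w_h\|_K^2\le Ch_{K,\min}^{-2}\|\tau_h w_h\|_K^2$, bounds the latter by $|K|$ times the nodal values squared, and uses the pointwise-to-$L^2$ inverse estimate $|\nabla p_{h|K_i^*}(x_i)|^2\le C|K_i^*|^{-1}\|\nabla p_h\|_{K_i^*}^2$. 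The geometric bookkeeping $|K|\,\tilde h_{\min|K_i^*}^4/(|K_i^*|\,h_{K,\min}^2)\le CH^2$, using only $\tilde h_{\min|K_i^*}\le h_{K,\min}$ and $h_{K,\max}\le H$, then closes the estimate. Note also that your starting identity $\nabla\tau_h w|_K-\tilde h_{\min|K}^2\nabla p_h|_K=\sum_i(\tilde h_{\min|K_i^*}^2-\tilde h_{\min|K}^2)\,p_h(x_i)\,\nabla\phi_i|_K$ hinges on $p_h$ being single-valued at the nodes; for the corrected $w_h=\tilde h_{\min}^2\nabla p_h$ this fails because $\nabla p_h$ is discontinuous, so your decomposition would not transfer directly even after the fix.
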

\begin{proof}
Let $w_h := \tilde{h}_{\min}^2 p_h$. We start with an inverse inequality and 
use the definition of $\tau_h$
\begin{align}
 \|\nabla \tau_h w_h\|_K^2 \leq C h_{K,\text{min}}^{-2} \|\tau_h w_h\|_K^2
 \leq \sum_{x_i\in {\cal X}_h^K} C h_{K,\text{min}}^{-2} |\tau_h w_h (x_i)| \|\phi_i\|_K^2
 \leq \sum_{x_i\in {\cal X}_h^K} C h_{K,\text{min}}^{-2} |K| |w_{h|K_i^*}(x_i)|^2 \label{triquad}
\end{align}
where ${\cal X}_h^K$ is the set of all Lagrange points of a cell $K$ and $\phi_i$ are the corresponding Lagrangian 
basis functions.
% \sfrei{Staerkere Argumentation in Dreiecken
% In the case of a triangle, we use again (\ref{int-point}) to obtain
% \begin{align*}
%  \|w_{h|K_i^*}(x_i)\|_K^2 \leq C\frac{|K_i^*|}{|e_i^*|}  \|w_{h|K_i^*}\|_{e_i^*}^2 &= C\frac{|K_i^*|}{|e_i^*|}  h_{K_i^*,\min}^4\|\nabla p_{h|K_i^*}\|_{e_i^*}^2 \\
%  &\leq C|K_i^*| h_{K_i^*,\min}^3\|\nabla p_{h|K_i^*}\|_{e_i^*}^2 
% \end{align*}
% for an edge $e_i^*\in K_i^*$ with $x_i \in \overline{e}_i^*$. In combination with (\ref{triquad}), this means (remember that by definition $h_{K_i^*,\min}\leq h_{K,\min}$)
% \begin{align*}
%  \|\nabla \tau_h w_h\|_K^2 \leq C\sum_{i=1}^3 |K_i^*| h_{K_i^*,\min}\|\nabla p_{h|K_i^*}\|_{e_i^*}^2 \leq CH \sum_{i=1}^3 h_{K_i^*,\min}^2\|\nabla p_{h|K_i^*}\|_{e_i^*}^2 \leq C \sum_{i=1}^3 S_{K_i^*}(p_h,p_h).
% \end{align*}
% }
By an inverse estimate, we obtain
\begin{align*}
 |w_{h|K_i^*}(x_i)|^2 =  \tilde{h}_{\min|K_i^*}^4 |\nabla p_{h|K_i^*}(x_i)|^2
 &\leq  \frac{C}{|K_i^*|} \tilde{h}_{\min|K_i^*}^4 \|\nabla p_h\|_{K_i^*}^2.
\end{align*}
Next, we note that $\tilde{h}_{\min|K_i^*} \leq C h_{K_i^*,\min}$, $|K| \leq h_{K,\max}^2$
and $|K_i^*| \geq C h_{K_i^*,\min} h_{K_i^*,\max}$.
In combination with (\ref{triquad}) this gives 
\begin{align*}
 \|\nabla \tau_h w_h\|_K^2 
 \leq C\sum_{i=1}^n \frac{|K|}{|K_i^*|} \frac{\tilde{h}_{K_i^*,\min}^4}{h_{K,\min}^2}\|\nabla p_h\|_{K_i^*}^2 
 &\leq C \frac{h_{K,\max}^2 h_{K_i^*,\min}^4 }{ h_{K_i^*,{\max}} 
 h_{K_i^*,{\min}}h_{K,\min}^2}\|\nabla p_h\|_{K_i^*}^2
 \,\leq\, CH^2 \|\nabla p_h\|_{K_i^*}^2.
\end{align*}
In the last step we have used that by definition $h_{K_i^*,\min}\leq h_{K,\min}, 
h_{K_i^*,\max}$ and $h_{K,\max}\leq H$.
\end{proof}

\subsection{Properties of the stabilisation}
\label{sec.propstab}
%  For a function $v\in L^2(\Omega)$, we
% introduce a projection to the space $H^1_0(\Omega)$ by defining
% \begin{align*}
%  \iota_0 v:=\sum_{x_i\in \Omega_h^f,x_i \not\in \partial\Omega} v \phi_i,
% \end{align*}
% where $\phi_i$ is the Lagrangian basis function corresponding to a node $x_i$. For a piecewise constant function on $\Omega_h^f$,
% $\iota_0 v$ equals $v$ in all interiour cells and $(v,\iota_0 v)_K \ge 1/3 \|v\|_K^2$ in boundary triangles.  
%It can be easily shown that the stabilization term satisfies the following bounds.

\noindent Next, we will show the properties of the stabilisation term that we will need in the analysis.
\begin{lemma}\label{lem.edgestab}
 Let $\psi_h \in V_h^r$ {\sfrei for $1\leq r\leq 3$}. There exists a constant $C>0$ independent of the boundary position 
 such that 
 {\lang the following lower bound holds
 for the set of cells }
 ${\cal T}_h^{\text{aniso}}$
 %, we have the lower bound
 \begin{align}
  H^2 \sum_{K \in {\cal T}_h^{\text{aniso}}} \|\nabla \psi_h \|_{K}^2 \leq C S(\psi_h,\psi_h). \label{stabbelow}
 \end{align}
 The complete stabilisation term is bounded above by
 \begin{align}
  S(\psi_h,\psi_h) \leq CH^2 \|\nabla \psi_h \|_{\Omega}^2. \label{stabbound}
 \end{align}
%  {\sfrei Zusaetzlich
%  \begin{align*}
%    S(\psi_h,\psi_h) \leq CH^3 \|\nabla \psi_h \|_{\infty,\Omega}^2 + \cup_{K\in \Omega_h^0} S_K(\psi_h,\psi_h) ?
%  \end{align*}}
 
 \noindent Furthermore, {\lang there holds} for a function $p_h\in V_h^{r,\text{dc}}$, the projection operator $\tau_h$ defined 
 in (\ref{dcProj}) {\lang and any} cell $K\in {\cal T}_h$ that
  \begin{align}
  \left\|\tilde{h}_{\min|K}^2\nabla p_h-\tau_h \left(\tilde{h}_{\min|K}^2 \nabla p_h\right)\right\|_K^2 
  \leq C \tilde{h}_{\min|K}^2 
  \sum_{L\in {\cal N}(K)} S_L(p_h,p_h), \label{projstab}
 \end{align}
 where ${\cal N}(K)$ denotes the set of neighbouring cells that share at least one common vertex with $K$.
 \end{lemma}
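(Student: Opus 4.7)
The plan is to reduce (\ref{stabbound}) and (\ref{stabbelow}) to polynomial trace inequalities on a reference element via the affine change of variables, and to prove (\ref{projstab}) by a node-wise telescoping argument through chains of edge-neighbours.

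For (\ref{stabbound}) and (\ref{stabbelow}), I fix a cell $K$ with affine reference map $T_K \colon \hat K \to K$ and set $\tilde q := (\nabla\psi_h)\circ T_K$, a vector-valued polynomial on $\hat K$ of degree $\le r-1 \le 2$. The change-of-variables formulas
\[
 \int_e h_n |\nabla\psi_h|^2\, do = \frac{|K|}{|\hat e|}\,\|\tilde q\|_{\hat e}^2,
 \qquad \|\nabla\psi_h\|_K^2 = \frac{|K|}{|\hat K|}\,\|\tilde q\|_{\hat K}^2
\]
convert both inequalities into reference-element norm equivalences whose constants depend only on the reference shape and polynomial degree. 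The trace inequality $\|\tilde q\|_{\hat e}^2 \le C\|\tilde q\|_{\hat K}^2$ gives (\ref{stabbound}) after summation over edges (each shared by at most two cells). For (\ref{stabbelow}), every edge of $K\in{\cal T}_h^{\text{aniso}}$ lies in ${\cal E}_h^{\text{aniso}}$, where the averaging $\{\cdot\}_e$ retains at least half of the one-sided contribution $h_{n|K}\int_e |\nabla\psi_h|_K|^2 \,do$, so it suffices to prove the reverse inequality $\|\tilde q\|_{\hat K}^2 \le C\sum_{\hat e \subset \partial \hat K}\|\tilde q\|_{\hat e}^2$. This holds because a vector-valued polynomial in the relevant class whose two components vanish on all edges of $\hat K$ must vanish identically---by the three-linear-factors argument on $P_2$ triangles, and by successive vanishing-on-edge factorisations on $Q_r$ squares, the latter pinning down the restriction $r\le 3$. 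Multiplication by $H^2$ and summation over anisotropic cells produces (\ref{stabbelow}).

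For (\ref{projstab}), since $\tilde h_{\min|K}^2$ is a constant on $K$ it factors out of $\tau_h$, and the Lagrange-basis representation gives
\[
\bigl\|\tilde h_{\min|K}^2\nabla p_h - \tau_h(\tilde h_{\min|K}^2\nabla p_h)\bigr\|_K^2
\le C\,|K|\,\tilde h_{\min|K}^4 \sum_{x_i \in {\cal X}_h^K} \bigl|\nabla p_{h|K}(x_i) - \nabla p_{h|K_i^*}(x_i)\bigr|^2.
\]
For each node $x_i$ I connect $K$ to $K_i^*$ through a short chain $K=K_0,K_1,\dots,K_m=K_i^*$ of cells all containing $x_i$, with $K_j,K_{j+1}$ sharing an edge $e_j$; the chain stays inside ${\cal N}(K)$ by the bounded-neighbourhood assumption. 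Telescoping and using the pointwise polynomial estimate $|q(x_i)|^2 \le C|e_j|^{-1}\|q\|_{e_j}^2$ applied to $q = (\nabla p_h)|_{K_j}-(\nabla p_h)|_{K_{j+1}}$ on $e_j$, I bound the node-wise jump by $L^2$-norms of edge-jumps: on $e_j \in {\cal E}_h^0$ directly by $S_h^0$ via $\int_{e_j}|[\nabla p_h]|^2\,do$, and on $e_j \in {\cal E}_h^{\text{aniso}}$ by the sum of the one-sided cell contributions, which are controlled by $S_h^{\text{aniso}}$ through (\ref{stabbelow}). The accumulated scaling $|K|\,\tilde h_{\min|K}^4/(|e_j|\,H^2\{h_n\}_{e_j}) \le \tilde h_{\min|K}^2\cdot H^{-2}$ (using $\tilde h_{\min|K}\le H$, $|K|\le H\,h_{K,\min}$ and $|e_j|\{h_n\}_{e_j}\sim |K_L|$ for an incident cell $L$) then collapses the right-hand side into $\tilde h_{\min|K}^2\sum_{L\in {\cal N}(K)} S_L(p_h,p_h)$.

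The main obstacle is clearly (\ref{projstab}): the variable-length chains for different nodes $x_i$, the switching between jump and average stabilisations along those chains, and the anisotropic scalings must align so that all constants remain independent of the aspect ratio. The preferential selection of anisotropic cells in the definition of $\tau_h$ and the maximum-angle condition are what keep the bookkeeping uniform.
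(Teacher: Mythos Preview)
Your treatment of (\ref{stabbelow}) and (\ref{stabbound}) is essentially the paper's argument: transform to the reference element and use that, for $r\le 3$, a gradient of a $Q_r$ (resp.\ $P_r$) function that vanishes on every edge must vanish identically. One small slip: for quadrilaterals the reference map $T_K$ is bilinear, not affine, and $(\nabla\psi_h)\circ T_K$ is not a polynomial of degree $\le r-1$; the paper works instead with $\hat\nabla\hat\psi_h$ and the quotient space $Q_r(\hat K)/P_0$, which keeps the scaling clean. This is easily repaired.

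The real problem is in your argument for (\ref{projstab}). You write that ``$\tilde h_{\min|K}^2$ is a constant on $K$ so it factors out of $\tau_h$''. It does not. The function being projected is the \emph{piecewise} function $w_h=\tilde h_{\min}^2\nabla p_h$, with $\tilde h_{\min}$ taking different values on different cells; this is how the lemma is invoked in Theorem~\ref{theo.StabAnisoSZ}. Hence $\tau_h(w_h)(x_i)=\tilde h_{\min|K_i^*}^2(\nabla p_h)|_{K_i^*}(x_i)$, and the nodal difference you must control is $\bigl|\tilde h_{\min|K}^2\nabla p_{h|K}(x_i)-\tilde h_{\min|K_i^*}^2\nabla p_{h|K_i^*}(x_i)\bigr|$, not $\tilde h_{\min|K}^2\bigl|\nabla p_{h|K}(x_i)-\nabla p_{h|K_i^*}(x_i)\bigr|$.

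This factoring error breaks your scaling. Take $K\in{\cal T}_h^0$ (so $\tilde h_{\min|K}=H$, $|K|\sim H^2$) and suppose the chain to $K_i^*$ passes through a thin anisotropic cell $K_j$ via an edge $e_j\in{\cal E}_h^{\text{aniso}}$. Your bound for that link is, after the pointwise inverse estimate,
\[
|K|\,H^4\,|e_j|^{-1}\|\nabla p_{h|K_j}\|_{e_j}^2
\;\le\; C\,|K|\,H^{2}\,|K_j|^{-1}\,S_{K_j}(p_h,p_h)
\;\sim\; C\,H^{4}\,|K_j|^{-1}\,S_{K_j}(p_h,p_h),
\]
which you need to dominate by $\tilde h_{\min|K}^2 S_{K_j}=H^2 S_{K_j}$. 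That forces $|K_j|\ge cH^2$, which fails precisely for the highly anisotropic cells the lemma is designed to handle. The paper's proof keeps the weight inside $w_h$: the contribution from $K_j$ then carries the factor $\tilde h_{\min|K_j}^4=h_{K_j,\min}^4$, and the maximum-angle bound $|K_j|\ge c\,h_{K_j,\min}h_{K_j,\max}$ gives $\tilde h_{\min|K_j}^4/|K_j|\le C h_{K_j,\min}^2\le CH^2$, restoring uniformity. Keeping the piecewise weight intact is exactly the point of applying $\tau_h$ to $\tilde h_{\min}^2\nabla p_h$ rather than to $\nabla p_h$; once you correct this, the telescoping (which the paper also uses, but only when $K\in{\cal T}_h^0$) goes through.
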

\begin{proof}
We start by showing that
\begin{align}
  c H^2 \|\nabla \psi_h \|_K^2 \leq S_K(\psi_h,\psi_h) \leq C H^2 \|\nabla \psi_h \|_K^2 \label{EquNorms}
\end{align}
for a cell $K\in {\cal T}_h^{\text{aniso}}$. This implies (\ref{stabbelow}) and the bound (\ref{stabbound})
for the cells belonging to ${\cal T}_h^{\text{aniso}}$. The inequalities (\ref{EquNorms}) follow by transformation 
to the reference element
and using equivalence of norms there. More precisely, we use that the functionals
  \begin{align*}
    s_1(\hat{\psi}_h) = \left(\sum_{\hat{e}\in\overline{\hat{K}}} \|\hat{\nabla} \hat{\psi}_h\|_{\hat{e}}^2\right)^{1/2} 
    \quad \text{and}\quad s_2(\hat{\psi}_h) = \|\hat{\nabla} \hat{\psi}_h\|_{\hat{K}}
  \end{align*}
  define both norms on the quotient space $Q_r(\hat{K}) / P_0$ for $r\leq 3$. The positivity 
  follows from the fact 
  that $s_i(\hat{\psi}_h) = 0$ implies $\hat{\psi}_h$=const in both 
  cases ($i=1,2$). This is obvious for $s_2$ and can be shown for $s_1$ by the following 
  argumentation: First, $s_1(\hat{\psi}_h) = 0$ implies that
  $\hat{\nabla} \hat{\psi}_h$ vanishes on the boundary of the reference element $\partial\hat{K}$.
  If $K$ is a quadrilateral, this means that $\hat{\psi}_h$ can be written as
  \begin{align*}
   \hat{\psi}_{h|\hat{K}}(\hat{x},\hat{y}) = \text{const} 
   + \hat{x}^2 \hat{y}^2 \left(1-\hat{x}\right)^2 \left(1-\hat{y}\right)^2 \hat{p}(\hat{x},\hat{y}),
  \end{align*}
  where $\hat{p}$ is a polynomial in $Q_{r-4}(\hat{K})$. As $\hat{\psi}_h \in Q_3(\hat{K})$, 
  we have $\hat{p}=0$ and thus
  $\hat{\psi}_{h|\hat{K}}=$ const. 
  In the case of a triangle, the same argumentation yields
    \begin{align*}
   \hat{\psi}_{h|\hat{K}}(\hat{x},\hat{y}) = \text{const} 
   + \hat{x}^2 \hat{y}^2 \left(1-\hat{x}-\hat{y}\right)^2 \hat{p}(\hat{x},\hat{y})
  \end{align*}
  with a polynomial $\hat{p}\in P_{r-6}$, which implies the positivity of $s_1$ even for
  polynomials up to order 5.
  
  {\lang The inequality} (\ref{stabbound}) follows when we prove the upper bound in (\ref{EquNorms}) also for 
  the cells $K\in {\cal T}_h^0$.
% As $\nabla \psi_h$ is constant on $K$, the first estimate follows by a simple scaling argument 
%  \begin{align*}
%   S_K(\psi_h,\psi_h) = \gamma H^2 \sum_{e\in\overline{K}} h_n \|\nabla \psi_h\|_e^2
%  % &= c\gamma H^2 \sum_{i,j=1}^2 h_{\eta_i} h_{\eta_i^{\perp}}  h_j^{-1} \|\hat{\partial}_j \hat{\psi}_h\|_{\hat{\eta}_i}^2\\
%   =C\gamma H^2 \|\nabla \psi_h \|_K^2. 
%   %&= H^2 |K| \sum_{j=1}^2 h_j^{-1} \|\hat{\partial}_j \hat{\psi}_h\|_{\hat{K}}^2. 
%  \end{align*}
%  This also proves (\ref{stabbound}) in the boundary part $\Omega_h^{\text{aniso}}$. 
 Therefore, we estimate the
 jump terms very roughly by (note that $h_n\sim H$)
 \begin{align*}
  S_K(\psi_h,\psi_h) \leq \gamma H^3 \sum_{e\in\overline{K}} \|\nabla \psi_h\|_e^2.
 \end{align*}
 The bound (\ref{stabbound}) follows again by transformation to the reference element and 
 by the equivalence of norms on finite dimensional spaces.
 
%  After transformation to the reference element, it suffices to show that the functionals
%   \begin{align*}
%     s_1(\hat{\psi}_h) = \left(\sum_{\hat{e}\in\overline{\hat{K}}} \|\hat{\nabla} \hat{\psi}_h\|_{\hat{e}}^2\right)^{1/2} 
%     \quad \text{and}\quad s_2(\hat{\psi}_h) = \|\hat{\nabla} \hat{\psi}_h\|_{\hat{K}}
%   \end{align*}
%   define both norms on the quotient space $Q_1 / P_0$. The positivity follows from the fact that $s_i(\hat{\psi}_h) = 0$ implies $\hat{\psi}_h$=const in both 
%   cases ($i=1,2$). With this, the statement (\ref{stabbound}) follows by using
%   the equivalence of norms on finite dimensional spaces.
  
To show (\ref{projstab}), we set $w_h = \nabla \tilde{h}_{\min}^2 p_h$ and estimate 
cell-wise for $K\in{\cal T}_h$
\begin{align*}
 \|w_h - \tau_h w_h\|_K^2  \leq |K| \sum_{x_i\in\overline{K}} |w_{h|K}(x_i) -\tau_h w_h(x_i)|^2.
\end{align*}
Let us first consider the case 
$x_i \in \partial\Omega$. We have
\begin{align*}
 |K| \,|w_{h|K}(x_i) -\tau_h w_h(x_i)|^2 = |K| \, |w_{h|K}(x_i)|^2.
\end{align*}
% A simple calculation for the two Lagrangian basis 
% functions $\phi_1,\phi_2$ corresponding to the end points $x_1^e,x_2^e$ yields 
% for a piecewise linear function $w_{h|K}$ on an edge $e \subset \overline{K}$
% \begin{align}
%  \int_e |w_{h|K}|^2 do \, &= \int_e | w_{h|K}(x_1^e) \phi_1 + w_{h|K}(x_2^e) \phi_2|^2 do \\
%  &\geq \frac{|e|}{6} \left( |w_{h|K}(x_1^e)|^2 + |w_{h|K}(x_2^e)|^2 \right).\label{int-point}
% \end{align}
The inverse estimate $|w_h(x_i)|^2 \leq |e_i|^{-1} \|w_h\|_{L^2(e_i)}^2$ 
for an edge $e_i\subset \partial K$ 
with $x_i \in \overline{e}_i$ yields
\begin{align*}
 |K| \, |w_{h|K}(x_i)|^2 \leq C\frac{|K|}{|e_i|} \int_{e_i} |w_{h|K}|^2\, do 
 = C \int_{e_i} h_{n,K} h_{K,\min}^4 |\nabla p_{h|K}|^2\, do
 \leq C h_{K,\min}^2 S_K(p_h,p_h).
\end{align*}
For the case $x_i \notin \partial\Omega$, let us first note that $w_h(x_i) - \tau_h w_h(x_i)$ vanishes, 
when $x_i$ lies in the interior of $K$. For $x_i\in\partial K$,
we assume in a first step that $K$ and $K_i^*$ share a common edge $e_i$. 
An inverse estimate yields
\begin{align*}
|K| |w_{h|K}(x_i) -w_{h|K_i^*}(x_i)|^2  \leq C\frac{|K|}{|e_i|} \int_{e_i} |\,[w_h]_{e_i}|^2 \, do.
 \end{align*}
 If both $K$ and $K_i^*$ belong to ${\cal T}_h^0$, we have $\tilde{h}_{\min|K}= H$ and thus
 \begin{align}
  |K| |w_{h|K}(x_i) -w_{h|K_i^*}(x_i)|^2 
  \leq C \int_{e_i} h_n H^4 |\,[\nabla p_h]_{e_i} |^2 \, do \leq C H^2 S_K(p_h,p_h).\label{Omegah0}
 \end{align}
 If at least one of the cells belongs to ${\cal T}_h^{\text{aniso}}$, we estimate
 \begin{align}
  |K| |w_{h|K}(x_i) -w_{h|K_i^*}(x_i)|^2
  \leq C \int_{e_i}  \big\{h_{n|K} \tilde{h}_{\min}^4|\nabla p_h|^2\big\}_{e_i} \, do
  \leq C \tilde{h}_{K,\min}^2 \left(S_K(p_h,p_h) + S_{K_i^*}(p_h,p_h)\right)\label{triangles}
 \end{align}
as by definition $\tilde{h}_{K_i^*,\min} \leq \tilde{h}_{K,\min}$.
Finally, we have to consider the case that $K$ and $K_i^*$ do not share a common edge, 
but only the common point $x_i$.
First, we notice that if $K\in{\cal T}_h^{\text{aniso}}$, then $K_i^*\in{\cal T}_h^{\text{aniso}}$
by definition and we can estimate each of the summands separately using appropriate 
edges $e_K \subset \partial K$
and $e_{K_i^*} \subset \partial K_i^*$
 \begin{align}
 \begin{split}\label{ReasonForMeanValue}
   |K| |w_{h|K}(x_i) -w_{h|K_i^*}(x_i)|^2
  &\leq C |K|\,\left( |w_{h|K}(x_i)|^2 + |w_{h|K_i^*}(x_i)|^2\right) \\
  &\leq C \int_{e_K}  h_{K,\min}^4 h_{n} |\nabla p_h|^2 \, do 
  \,+\, C \frac{|K|}{|e_{K_i^*}|} \int_{e_{K_i^*}}  h_{K_i^*,\min}^4 |\nabla p_{h|K_i^*}|^2 \, do \\
  &\leq C h_{K,\min}^2 \left(S_K(p_h,p_h) + S_{K_i^*}(p_h,p_h)\right).
  \end{split}
 \end{align}
 In the last step, we have used that 
 $|K|\leq H^2, h_{K_i^*,\min}\leq |e_{K_i^*}|$
 and $h_{K_i^*,\min}\leq h_{n|K_i^*}$.
 
For $K\in {\cal T}_h^0$, we have $\tilde{h}_{\min,K}=H$. We split in the following way
\begin{align}
 |K| &|w_{h|K}(x_i) -w_{h|K_i^*}(x_i)|^2 
 \leq C|K| \left( |w_{h|K}(x_i) -w_{h|K_1}(x_i)|^2 
 + ... + |w_{h|K_n}(x_i) -w_{h|K_i^*}(x_i)|^2 \right), \label{split}
\end{align}
such that {\lang the cells share a common edge in each of the summands}. 
Now we apply the argumentations (\ref{Omegah0}) or (\ref{triangles}) to each of the summands.
\end{proof}

\begin{remark}{(Higher-order polynomials)}
The three inequalities (\ref{stabbelow}), (\ref{stabbound}) and (\ref{projstab}) 
are the properties of the stabilisation that we will exploit to show stability. The proof of 
Lemma~\ref{lem.edgestab} shows that the same results can be obtained for 
polynomial degrees up to order 5, if only triangles are used in $\Omega_h^{\text{aniso}}$, 
as in the locally modified finite element method. Higher polynomial degrees
can be controlled by using additionally higher-order derivatives in the stabilisation
term. As the approximation orders will be limited by the non-consistency of the stabilisation, 
however, high-order polynomials are not of interest for the method presented here.
\end{remark}

% \begin{remark}{(Relation to LPS stabilisations).}
% The three inequalities (\ref{stabbelow}), (\ref{stabbound}) and (\ref{projstab}) are the properties of the stabilisation that we will exploit in the following.
% In the case that the patch mesh already resolves the boundary ($V_{2h}^0 \subset V_h^{r,0}$), it is usually sufficient to define
% the stabilisation terms over interior patch edges only. One can 
% then show that this CIP stabilisation technique is equivalent to a \textit{local projection stabilisation (LPS)} method. Here, however, 
% using interior patch edges is not sufficient. In the argumentation used above, the estimate (\ref{projstab}) requires stabilisation terms over 
% outer patch edges. An argumentation based on a \textit{modified inf-sup condition} as typically used to show stability for an \textit{LPS}-type
% stabilisation~\cite{BeckerBraack2001} is not possible either, due to the patches that are only partially contained in $\Omega$.
% \end{remark}

\section{Stability}
\label{sec.stab}
In this section, we prove a stability result. Therefore, and for the following error analysis, 
we will need $H^1$-stable projections.

\subsection{Ritz projection}
\label{sec.H1stab}

Defining an $H^1$-stable interpolation operator $\pi_h: H^1_0(\Omega; \Gamma^d)\to V_h^{r,0}$
that attains boundary values is not straight-forward. 
For the locally modified finite element method an $H^1$-stable operator could be obtained by defining
a standard $H^1$-stable interpolation
$i_{2h}: H^1(\Omega) \to V_{2h}$ of Cl\'ement~\cite{Clement75} or Scott-Zhang type~\cite{ScottZhang90}
onto the patch grid ${\cal T}_{2h}$ and an interpolation to ${\cal T}_h$. The $H^1$-stability follows
from the regularity
of the patch grid $\Omega_{2h}$. This interpolant will not fulfil
the boundary values, however, on boundary lines that lie in the interior of patches. 
A manipulation of this
operator is not straight-forward, as simply setting 
the desired boundary values in boundary nodes does not necessarily conserve 
the $H^1$-stability in anisotropic elements. 

%\paragraph{Ritz projection}
For our purposes there is a simple solution, however. We can show that the Ritz 
projection operator $R_h: H^1_0(\Omega; \Gamma^d) \to V_h^{r,0}$
defined by
\begin{align}\label{defRitz}
 (\nabla R_h u,\nabla \phi_h)_{\Omega} = (\nabla u,\nabla \phi_h)_{\Omega} 
 \quad \forall \; \phi_h \in V_h^{r,0}
\end{align}
is $H^1$-stable. By definition, it also attains the boundary values. 

Moreover, we define a modified Ritz projection $\overline{R}_h: H^1(\Omega) \to V_h^r$
that conserves the global mean value of a function 
$u\in H^1(\Omega)$ instead of the Dirichlet boundary values. Therefore, we define the global mean value
by $\overline{u}=|\Omega|^{-1} \int_{\Omega} u \, dx$ and a finite element space by
\begin{align*}
  \overline{V}_h^{r} := \left\{\phi\in V_h^r, \, \overline{\phi}=0 \right\}.
\end{align*}
The modified Ritz projection is defined by: 
\textit{Find $\overline{R}_h u \in \overline{u} + \overline{V}_h^r$ such that}
\begin{align}\label{defModRitz}
 (\nabla \overline{R}_h u, \nabla \phi_h)_{\Omega} 
 = (\nabla u,\nabla \phi_h)_{\Omega} \quad \forall \phi_h \in \overline{V}_h^r.
\end{align}
We will use this projection for the pressure $p$ in the Stokes equations. The modification 
{\lang is necessary}
in the absence of Dirichlet
boundary conditions to obtain a well-defined operator.

We have the following approximation results for the Ritz projections.
\begin{lemma}
 \label{lem.RitzProj}
 Under the conditions of Section~\ref{sec.discgeneral},
 the Ritz projection defined in (\ref{defRitz}) is $H^1$-stable 
 \begin{align*}
 \|\nabla R_h u\|_{\Omega} \leq C \|\nabla u\|_\Omega \quad \forall u\in H^1_0(\Omega;\Gamma^d)
 \end{align*} 
 and we have the estimate
%  \begin{align}\label{RitzH1}
%   \|u-R_h u\|_{\Omega} \le CH \|\nabla u\|_{\Omega}.
%  \end{align}
%  If $\Omega$ has a $C^{r,1}$-parametrised boundary, that is approximated with
%  approximation order $r\geq1$ in the finite element space $V_h^r$, it holds for $j=0,1$ that
 \begin{align}\label{RitzH2}
 \|\nabla^j (u-R_h u) \|_{\Omega} \le CH^{s-j} \|u\|_{H^{s}(\Omega)}
 \end{align}
 for $j=0,1$ and $1\leq s\leq r+1$.
 The same results hold true for the modified Ritz projection $\overline{R}_h$ defined in 
 (\ref{defModRitz}).
\end{lemma}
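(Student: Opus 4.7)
The plan is to prove both parts at once by standard Galerkin/Aubin--Nitsche arguments, using only the fact that $R_h$ is the $H^1$-orthogonal projection onto $V_h^{r,0}$. The non-trivial input is the existence of a Lagrange-type interpolation operator $I_h: H^s(\Omega)\cap H^1_0(\Omega;\Gamma^d) \to V_h^{r,0}$ satisfying
\begin{align*}
 \|\nabla^j(u-I_hu)\|_{\Omega} \leq CH^{s-j}\|u\|_{H^s(\Omega)}, \qquad j=0,1,\; 2\leq s\leq r+1,
\end{align*}
on our mixed triangular/quadrilateral mesh. For $s\geq 2$ this follows from the nodal interpolant: in ${\cal T}_h^0$ by standard shape-regular theory, and in ${\cal T}_h^{\text{aniso}}$ by the classical anisotropic interpolation estimates of Apel, which only require the maximum angle condition. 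Since the interpolant is nodal, it preserves the homogeneous boundary values on $\Gamma^d$, so $I_hu \in V_h^{r,0}$.

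For the $H^1$-stability, we test (\ref{defRitz}) with $\phi_h = R_hu$ and apply Cauchy--Schwarz to obtain $\|\nabla R_hu\|_{\Omega}\leq \|\nabla u\|_{\Omega}$ immediately. For (\ref{RitzH2}) with $j=1$ and $s\geq 2$, Galerkin orthogonality gives that $R_h u$ is the best approximation in the $H^1$-seminorm: choosing $\phi_h = I_hu - R_hu \in V_h^{r,0}$,
\begin{align*}
 \|\nabla(u-R_hu)\|_{\Omega}^2 = (\nabla(u-R_hu),\nabla(u-I_hu))_{\Omega} \leq \|\nabla(u-R_hu)\|_{\Omega}\,\|\nabla(u-I_hu)\|_{\Omega},
\end{align*}
so the bound reduces to the interpolation estimate. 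The case $s=1$, $j=1$ is just the stability bound.

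For $j=0$, I would use a duality argument. Consider the dual problem $-\Delta z = u-R_hu$ in $\Omega$ with $z=0$ on $\Gamma^d$ and $\partial_n z = 0$ on $\Gamma^n$. By the convex polygonal regularity hypothesis (cf. the standing assumption in Section~\ref{sec.discgeneral}), $z\in H^2(\Omega)$ with $\|z\|_{H^2(\Omega)}\leq C\|u-R_hu\|_{\Omega}$. Integration by parts combined with Galerkin orthogonality of $u-R_hu$ against $R_hz\in V_h^{r,0}$ yields
\begin{align*}
 \|u-R_hu\|_{\Omega}^2 = (\nabla(u-R_hu),\nabla(z-R_hz))_{\Omega}
 \leq \|\nabla(u-R_hu)\|_{\Omega}\,\|\nabla(z-R_hz)\|_{\Omega}.
\end{align*}
The first factor is bounded by $CH^{s-1}\|u\|_{H^s}$ from the previous step, the second by $CH\,\|z\|_{H^2}\leq CH\,\|u-R_hu\|_{\Omega}$ via the $j=1$ estimate applied to $z$ with $s=2$. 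Dividing gives the desired $H^s$ convergence rate.

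The modified Ritz projection $\overline{R}_h$ is handled identically: stability follows from testing with $\phi_h = \overline{R}_hu - \overline{u}\in \overline{V}_h^r$, best approximation still holds (now over the affine subspace $\overline{u} + \overline{V}_h^r$ using an interpolant with matching mean, obtained from the nodal interpolant by adding a constant), and the dual problem for the $L^2$-estimate is replaced by the pure Neumann problem with a compatibility condition, which is well-posed modulo constants and enjoys the same $H^2$-regularity on convex polygons. The principal obstacle is really only the availability of the anisotropic interpolation estimate in $V_h^{r,0}$ on mixed triangular-quadrilateral meshes satisfying only the maximum angle condition; once this is in hand, every other step is routine.
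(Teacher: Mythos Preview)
Your proposal is correct and follows essentially the same route as the paper: $H^1$-stability by testing with $R_h u$, the $H^1$-error via best approximation combined with an interpolation estimate valid under the maximum angle condition (the paper cites Babu\v{s}ka--Aziz and Acosta--Dur\'an rather than Apel, but to the same effect), and the $L^2$-estimate by an Aubin--Nitsche duality argument using the $H^2$-regularity on the convex polygon. The treatment of $\overline{R}_h$ is also the same, testing with $\overline{R}_h u - \overline{u}$; the only cosmetic difference is that the paper inserts the interpolant $I_h z$ rather than $R_h z$ in the dual step, which is equivalent by best approximation.
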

\begin{proof}
The $H^1$-stability of $R_h$ follows by definition of the Ritz-projection (\ref{defRitz}) by testing with 
$\phi_h=R_h u$. 
For a quasi-uniform triangulation, the proof of
(\ref{RitzH2}) is standard and can be found in many textbooks.
Moreover, Babu\v{s}ka \& Az\'iz~\cite{BabuskaAziz1976} and Acosta \& Dur\'an~\cite{AcostaDuran2000} 
have shown for $s\geq 2$
that a maximum angle condition is sufficient to show (\ref{RitzH2}) for triangulations consisting of 
triangles and quadrilaterals, respectively. 
In particular, these works show besides (\ref{RitzH2}) the existence of an interpolation 
operator $I_h: H^2(\Omega) \to V_h^r$
that fulfils
\begin{align}
 \|\nabla^j (u-I_h u)\|_{\Omega} \leq CH^{2-j} \|u\|_{H^2(\Omega)}.\label{Interpol}
\end{align}
We only show the {\lang assertion} for $s=1$ here: For $j=1$, the estimate follows directly from the $H^1$-stability
of $R_h$. For the $L^2$-norm error estimate we use a dual problem: 
Let $z\in H^1_0(\Omega; \Gamma^d)$ be the solution of
\begin{align}\label{RitzDualP}
 (\nabla z,\nabla \phi)_{\Omega} = \left( \frac{u-R_h u}{\|u-R_h u\|},\phi\right)_{\Omega} 
 \quad \forall \phi \in H^1_0(\Omega; \Gamma^d).
\end{align}
As $\Omega$ is convex, $z$ lies in $H^2(\Omega)$ and $\|z\|_{H^2(\Omega)} \leq c$. 
Now we have by means of the definition of the Ritz projection, the interpolation estimate (\ref{Interpol})
and the Cauchy-Schwarz inequality
\begin{align*}
 \|u-R_h u\|_{\Omega} =(\nabla z,\nabla (u-R_h u))_{\Omega}
 = (\nabla (z-I_h z,\nabla (u-R_h u))_{\Omega}
 \,\leq\, CH \|\nabla^2 z\| \|\nabla (u-R_h u\|_\Omega
 &\,\leq\, CH \|\nabla u\|_\Omega.
\end{align*}
The results for the modified Ritz
 projection operator $\overline{R}_h$ can be shown with a very similar argumentation. Small
 modifications are necessary, whenever we have to test with a function with zero mean value. 
 To show the $H^1$-stability for example, we test (\ref{defModRitz}) with 
 $\phi_h = \overline{R}_h u - \overline{u}$ instead of $R_h u$.
\end{proof}

\subsection{Stability estimate}

Let us introduce the triple norm
\begin{align*}
 |||(v_h,p_h)|||_{\Omega} := \left(\nu \|\nabla v_h\|_{\Omega}^2 + 
 \|p_h\|_{\Omega}^2 + H^2 \|\nabla p_h\|_{\Omega}^2\right)^{1/2}.
\end{align*}

The argumentation used in the following proofs follows the lines 
of Burman \& Hansbo~\cite{BurmanHansbo2006edge} and Burman, Fern\'andez and 
Hansbo~\cite{BurmanFernandezHansboNS}.
Here, we have to modify their arguments in some parts, however, to account for the anisotropy of the mesh.
The main tool we use is the projection operator $\tau_h$ introduced in Section~\ref{sec.H1stab}.

\begin{theorem}\label{theo.StabAnisoSZ}
 Under the assumptions made in Section~\ref{sec.discgeneral} 
 it holds for $(v_h,p_h) \in {\cal V}_h \times {\cal L}_h$ with a constant $C$ 
 that is independent of the discretisation
 \begin{align*}
  |||(v_h,p_h)|||_{\Omega} \,\leq \, C \sup_{(\phi_h,\psi_h)\in {\cal V}_h\times {\cal L}_h} \frac{A(v_h,p_h)(\phi_h,\psi_h) + S(p_h,\psi_h)}{|||(\phi_h,\psi_h)|||_{\Omega}}. %\leq C \|f\|_{H^{-1}(\Omega)}.
 \end{align*}
\end{theorem}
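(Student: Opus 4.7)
The plan is to follow the three–test strategy familiar from Burman and Hansbo's work on CIP stabilisation, adapted to the anisotropic setting by making crucial use of the projection $\tau_h$ from Section~\ref{sec.press}. I would test the bilinear form successively with $(v_h,p_h)$, with $(R_h w_p,0)$---where $w_p\in H^1_0(\Omega;\Gamma^d)^2$ is a preimage of $p_h$ under the divergence furnished by the continuous \emph{inf-sup} condition, so $\text{div}\,w_p=p_h$ and $\|\nabla w_p\|_\Omega\le C\|p_h\|_\Omega$---and with $(-\alpha\,\tau_h(\tilde{h}_{\min}^2\nabla p_h),0)$ for a small parameter $\alpha>0$. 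Each choice controls one slot of the triple norm, and a suitable linear combination yields a coercive lower bound.

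The first test immediately gives $\nu\|\nabla v_h\|_\Omega^2+S(p_h,p_h)$, so the energy of the velocity and, via (\ref{stabbelow}), the anisotropic pressure gradient $H^2\|\nabla p_h\|_{\Omega_h^{\text{aniso}}}^2$ are already controlled. For the second test, the $H^1$--stability of the Ritz projection (Lemma~\ref{lem.RitzProj}) and cell--wise integration by parts---legitimate because $p_h$ is globally in $H^1$---transform $-(p_h,\text{div}\,R_h w_p)_\Omega$ into $-\|p_h\|_\Omega^2+(\nabla p_h,R_h w_p-w_p)_\Omega$ modulo boundary contributions, producing $\|p_h\|_\Omega^2$ up to terms bounded by $C\|\nabla v_h\|_\Omega\|p_h\|_\Omega$ and, via the $L^2$--estimate in (\ref{RitzH2}), by $CH\|\nabla p_h\|_\Omega\|p_h\|_\Omega$. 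For the third test, since $\tau_h$ vanishes identically on $\partial\Omega$ the analogous integration by parts has no boundary term, and writing the resulting inner product as $\|\tilde{h}_{\min}\nabla p_h\|_\Omega^2$ plus a remainder $(\nabla p_h,\tau_h(\tilde{h}_{\min}^2\nabla p_h)-\tilde{h}_{\min}^2\nabla p_h)_\Omega$ and applying (\ref{projstab}) together with Young's inequality yields the lower bound $\tfrac12\|\tilde{h}_{\min}\nabla p_h\|_\Omega^2-C\,S(p_h,p_h)$.

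Summing the three tests with weights $1$, $\delta_1$, $\delta_2$ for sufficiently small $\delta_1,\delta_2>0$ and using (\ref{stabtauh}) together with the $H^1$--stability of $R_h$ to bound $|||(R_h w_p,0)|||_\Omega\le C\|p_h\|_\Omega$ and $|||(\tau_h(\tilde{h}_{\min}^2\nabla p_h),0)|||_\Omega\le CH\nu^{1/2}\|\nabla p_h\|_\Omega$, the cross--terms $\nu(\nabla v_h,\nabla R_h w_p)$ and $\nu(\nabla v_h,\nabla\tau_h(\cdots))$ are absorbed by Young's inequality into $\nu\|\nabla v_h\|_\Omega^2$, leaving a lower bound of the form $c\,|||(v_h,p_h)|||_\Omega^2$; dividing through then delivers the \emph{inf-sup} estimate. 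The main obstacle will be the extraction of $H^2\|\nabla p_h\|_{\Omega_h^0}^2$, since on regular edges $S$ only penalises jumps of $\nabla p_h$ and the direct bound (\ref{stabbelow}) is unavailable there: this is precisely what the third test and the delicate approximation estimate (\ref{projstab}) are designed to deliver, using the fact that $\tilde{h}_{\min}\equiv H$ on $\Omega_h^0$ so that $\|\tilde{h}_{\min}\nabla p_h\|_\Omega^2$ dominates $H^2\|\nabla p_h\|_{\Omega_h^0}^2$. A secondary, less essential point is the boundary integral on $\Gamma^n$ arising in the integration by parts of $(p_h,\text{div}(R_h w_p-w_p))$; this is either absorbed by working with $w_p$ satisfying $w_p\cdot n=0$ on $\Gamma^n$ or via a local modification of the projection operator there, and does not alter the overall argument.
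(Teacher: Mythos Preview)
Your proposal is essentially the paper's own proof: the same three-test strategy with $(v_h,p_h)$, the Ritz projection of a divergence preimage of $p_h$, and $\tau_h(\tilde h_{\min}^2\nabla p_h)$, invoking (\ref{stabtauh}), (\ref{stabbelow}) and (\ref{projstab}) at exactly the same places and combining via small weights. Two sign slips to fix: with your convention $\text{div}\,w_p=p_h$ (the paper takes $\text{div}\,\tilde v=-p_h$) you must test with $-R_h w_p$ to obtain $+\|p_h\|_\Omega^2$, and the third test function should be $+\alpha\,\tau_h(\tilde h_{\min}^2\nabla p_h)$ rather than $-\alpha\,\tau_h(\cdots)$, since $-(p_h,\text{div}\,\tau_h(\cdots))_\Omega=+(\nabla p_h,\tau_h(\cdots))_\Omega$ already carries the correct sign; both repairs are trivial and your treatment of the $\Gamma^n$ boundary term is in fact more careful than the paper's.
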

\begin{proof}
 At first we notice that
 \begin{align}
  A(v_h,p_h)(v_h,p_h) + S(p_h,p_h) = \nu \|\nabla v_h\|_{\Omega}^2 + S(p_h,p_h).\label{pstab1}
 \end{align}
Next, we derive a bound for the $L^2$-norm of the pressure $p_h$. 
Therefore, we use the surjectivity of the divergence operator (see e.g. Temam~\cite{Temam2000}) to define
a function $\tilde{v} \in H^1_0(\Omega)$ by
\begin{align*}
 (\text{div } \tilde{v}, \phi)_{\Omega} = -(p_h,\phi)_{\Omega} \quad \forall \phi \in L^2(\Omega).
\end{align*}
It holds that
\begin{align}
 \|\nabla \tilde{v} \|_{\Omega} \leq C \|p_h\|_{\Omega}.\label{tildev}
\end{align}
Using the test function $(\phi_h,\psi_h) = (\eps_1 R_h \tilde{v},0)$, where $R_h$ is the Ritz projection
operator introduced in Section~\ref{sec.H1stab} and $\eps_1>0$, we obtain
\begin{align}\label{tildevtest}
 A(v_h,p_h)(\eps_1 R_h \tilde{v},0) = \eps_1 \nu (\nabla v_h,\nabla R_h \tilde{v})_{\Omega} - \eps_1(p_h,\text{div} (R_h \tilde{v}))_{\Omega}. 
\end{align}
For the first term, we use the $H^1$-stability of the Ritz projection (Lemma~\ref{lem.RitzProj}) and (\ref{tildev}) to get
\begin{align*}
 \eps_1 \nu (\nabla v_h,\nabla R_h \tilde{v})_{\Omega} \geq -C\eps_1 \nu \|\nabla v_h\|_{\Omega} \|p_h\|_{\Omega} 
 \geq -\frac{\nu}{4} \|\nabla v_h\|_{\Omega}^2 - C\eps_1^2\|p_h\|_{\Omega}^2.
\end{align*}
For the second term in (\ref{tildevtest}), we add $\pm \tilde v$ and use (\ref{tildev}), integration by parts, 
the error estimate for the Ritz projection (Lemma~\ref{lem.RitzProj}) and Young's inequality
\begin{align}\label{ReasonForH}
\begin{split}
 -\eps_1(p_h,\text{div } R_h \tilde{v})_{\Omega} &= \eps_1(p_h,\text{div } (\tilde{v} -R_h \tilde{v}))_{\Omega} - \eps_1(p_h,\text{div}\,\tilde{v})_{\Omega}\\
 &= \eps_1(\nabla p_h,\tilde{v}- R_h \tilde{v})_{\Omega} + \eps_1 \|p_h\|_{\Omega}^2\\
 &\geq -C\eps_1 H \|\nabla p_h\|_{\Omega} \|p_h\|_{\Omega} + \eps_1 \|p_h\|_{\Omega}^2\\
 &\geq -C\eps_1 H^2 \|\nabla p_h\|_{\Omega}^2 + \frac{\eps_1}{2} \|p_h\|_{\Omega}^2.
 \end{split}
\end{align}
By combining the estimates, we have
\begin{align}
 A(v_h,p_h)(\eps_1 R_h \tilde{v},0) \geq -\frac{\nu}{4} \|\nabla v_h\|_{\Omega}^2 -C\eps_1 H^2 \|\nabla p_h\|_{\Omega}^2 + \frac{\eps_1}{4} \|p_h\|_{\Omega}^2.\label{pstab2}
\end{align}

Next, we will show a bound for the derivatives of $p_h$. Therefore, we test with the projection 
$\tau_h$ of the
discontinuous function $\tilde{h}_{\min}^2 \nabla p_h$ {\lang defined in (\ref{dcProj})}
\begin{align}
\begin{split}
 A(v_h,p_h)&\left(\eps_2 \tau_h\left(\tilde{h}_{\min}^2 \nabla p_h\right),0\right) 
 = \eps_2 \nu \left(\nabla v_h,\nabla \tau_h\left(\tilde{h}_{\min}^2 \nabla p_h\right)\right) 
 - \eps_2 \left(p_h, \text{div} \left(\tau_h\left(\tilde{h}_{\min}^2 \nabla p_h\right)\right)\right).\label{TestTau}
 \end{split}
\end{align}
We use the Cauchy-Schwarz inequality, the stability result (\ref{stabtauh}) 
for the projection $\tau_h$ and Young's inequality for the first part
\begin{align*}
 \eps_2 \nu \left(\nabla v_h,\nabla \tau_h\left(\tilde{h}_{\min}^2 \nabla p_h\right)\right) 
 &\geq -C\eps_2 H \nu \|\nabla v_h\|_{\Omega} \|\nabla p_h\|_{\Omega}
 %\left(\sum_{K\in {\cal T}_h^f} h_{K,{\min}}^{-2} \|h_{K,{\min}}^2 \nabla p_h\|_K^2\right)^{1/2}\\
 \geq -C\eps_2 \nu \|\nabla v_h\|^2 - \frac{\eps_2 H^2}{8} \|\nabla p_h\|_{\Omega}^2.
 %\left(\sum_{K\in {\cal T}_h^f} h_{K,{\min}}^2 \|\nabla p_h\|_K^2\right). 
\end{align*}
For the second part in (\ref{TestTau}), we apply integration by parts and 
insert $\pm \tilde{h}_{\min}^2 \nabla p_h$
 \begin{align*}
  - \eps_2 \Big(p_h, \text{div } \left(\tau_h\left(\tilde{h}_{\min}^2 \nabla p_h\right)\right)\Big) 
  &= \eps_2 \left(\nabla p_h, \tau_h\left(\tilde{h}_{\min}^2 \nabla p_h\right)\right)\\
  &= \eps_2 \left(\nabla p_h, \tau_h\left(\tilde{h}_{\min}^2 \nabla p_h\right) - \tilde{h}_{\min}^2 \nabla p_h\right) + \eps_2 \|\tilde{h}_{\min}\nabla p_h\|^2.  
 \end{align*}
% As in each element, there is at least one interiour node, one easily checks that
%  \begin{align*}
%   \left(\nabla p_h, h_{K,{\min}}^2 \iota_0(\nabla p_h)\right) \geq \frac{1}{3} \left\|h_{K,{\min}} \nabla p_h\right\|^2. 
%  \end{align*}
For the first term, Lemma~\ref{lem.edgestab} guarantees in combination with Young's inequality
\begin{align*}
 \eps_2 \left(\nabla p_h, \tau_h\left(\tilde{h}_{\min}^2 \nabla p_h\right) 
 - \tilde{h}_{\min}^2 \nabla p_h\right)
 &\quad\geq -C \eps_2 \left( \sum_{K\in {\cal T}_h} \|\nabla p_h\|_K  \tilde{h}_{\min|K} \left(\sum_{L\in {\cal N}(K)} S_L(p_h,p_h)\right)^{1/2}\right)\\
 &\quad\geq -\frac{\eps_2}{2} \left\|\tilde{h}_{\min} \nabla p_h\right\|_{\Omega}^2 - C \eps_2 S(p_h,p_h).
\end{align*}
We have thus shown that
\begin{align}
 A(v_h,p_h)\left(\eps_2 \tau_h\left(\tilde{h}_{\min}^2 \nabla p_h\right),0\right) 
 \geq -C\eps_2 \nu \|\nabla v_h\|_{\Omega}^2 + \frac{\eps_2}{2} \left\|\tilde{h}_{\min} \nabla p_h\right\|_{\Omega}^2 - C \eps_2 S(p_h,p_h)
 - \frac{\eps_2 H^2}{8} \|\nabla p_h\|_{\Omega}^2.
 \label{pstab3}
\end{align}
Finally, we combine (\ref{pstab1}), (\ref{pstab2}) and (\ref{pstab3}) and choose $\eps_1 \ll \eps_2 \ll 1$
\begin{align*}
 A(v_h,p_h)(v_h+\eps_1 R_h \tilde{v}&+\eps_2 \tau_h\left(\tilde{h}_{\min}^2 \nabla p_h\right),p_h) + S(p_h,p_h)\\
 &\geq \frac{\nu}{2} \|\nabla v_h\|_{\Omega}^2 + \frac{1}{2} S(p_h,p_h) 
 + \frac{\eps_1}{4} \|p_h\|_{\Omega}^2
 + \frac{\eps_2}{2} \left\|\tilde{h}_{\min} \nabla p_h\right\|_{\Omega}^2 -\frac{\eps_2}{4}H^2 \|\nabla p_h\|_{\Omega}^2.
\end{align*}
For the last term, we note that $\tilde{h}_{\min|K}= H$ in all cells $K\in{\cal T}_h^0$. The contributions in 
the anisotropic elements $K\in{\cal T}_h^{\text{aniso}}$ can be estimated by
the stability term (see Lemma~\ref{lem.edgestab}). Thus, we have
\begin{align}\label{HnablapleqS}
 H^2 \|\nabla p_h\|_{\Omega}^2 \leq \left\|\tilde{h}_{\min} \nabla p_h\right\|_{\Omega}^2 
 + C S(p_h,p_h).
\end{align}
Altogether we have shown that
\begin{align*}
 |||(v_h,p_h)|||_{\Omega}^2 \leq C \left(A(v_h,p_h)\left(\phi_h,p_h\right) + S(p_h,p_h)\right)
\end{align*}
for 
\begin{align*}
\phi_h = v_h+\eps_1 R_h \tilde{v} + \eps_2 \tau_h\left(\tilde{h}_{\min}^2 \nabla p_h\right).
\end{align*}
Due to the stability results for the projection operators $\tau_h$ and $R_h$, we have $|||\phi_h,p_h)||| \leq C|||(v_h,p_h)|||$ and 
thus, the statement of the theorem is proven. 
%The second inequality follows from the fact that $(v_h,p_h)$ solve the problem (\ref{DiscStokes}).
\end{proof}

\begin{remark}{(Definition of the stabilisation term)}\label{rem.stabterm}
Let us comment on {\lang the form of the stabilisation term (\ref{defedgestab}),
in particular the use of averages} and the weights $H$. 
The reason {\lang to use averages} is to be able to control the term
$H^2 \|\nabla p_h\|_K$ in the anisotropic cells $K\in {\cal T}_h^{\text{aniso}}$ 
that appears in (\ref{ReasonForH}) by means of (\ref{stabbelow})
\begin{align*}
  H^2 \sum_{K \in {\cal T}_h^{\text{aniso}}} \|\nabla p_h \|_{K}^2 \leq C S(p_h,p_h).
 \end{align*}
In~\cite{BurmanFernandezHansboNS}
this was circumvented by testing with the $L^2$-projection $\pi_h \tilde{v}$ instead of the 
Ritz projection $R_h \tilde{v}$, which could be used to insert a projection $i_h \nabla p_h$ of
$\nabla p_h$ to $V_h^r$.
On anisotropic grids, the $L^2$-projection is {\lang however} not $H^1$-stable. Moreover, the argumentation used 
in~\cite{BurmanFernandezHansboNS} to control $(\nabla p_h - i_h (\nabla p_h))$ by the stabilisation
(which is similar to the argumentation (\ref{Omegah0}) and (\ref{split}) we used in $\Omega_h^0$)
relies on cells of the 
same size $h$ everywhere and can not be transferred to the situation considered here.

In the scaling of the cell-wise contributions, 
we have to use the size $H$ of the regular cells instead of the local cell sizes
$h_n$ and $h_{\tau}$, as this $H$ appears in (\ref{ReasonForH}) from
the approximation error of the Ritz projection. 
On structured grids with a bounded change of anisotropy
this estimate could be improved to
\begin{align*}
 \|\tilde{v}-i_h\tilde{v}\|_K \leq C \left( h_{\tau}^2 \|\partial_\tau \tilde{v}\|_K^2
+ h_n^2 \|\partial_n \tilde{v}\|_K^2
 \right)^{1/2}
\end{align*}
with an interpolation operator of Scott-Zhang type~\cite{ApelBuch}.
Then, the weights $H$ in the stability term could be replaced by $h_n$ and $h_{\tau}$, as
this stabilisation term $\tilde{S}$ would be an upper bound to
\begin{align*}
 \sum_{K\in {\cal T}_h^{\text{aniso}}} h_{K,\tau}^2 \|\partial_{\tau} p\|_K^2 
 + h_{K,n}^2 \| \partial_n p\|_K^2,
\end{align*}
which is needed in (\ref{HnablapleqS}).
% The reason to use the mean value is the possibility to seperate in
% (\ref{ReasonForMeanValue}) in two contributions that can both be controlled by the stabilization term,
% without the need to split in summands containing only contributions from neighbouring elements as in
% (\ref{split}). There, the inverse estimate would result in a factor
% $\frac{|K|}{|e_j|}$ can be very large and much larger than 
% $h_{n,K_j}$ and $h_{\min,K}$, if $|K|$ and $e_j$ are from different cells {\sfrei TO DO!}. 
\end{remark}

\section{A priori error analysis}
\label{sec.apriori}

% {\sfrei Ritz projection fuer Funktionen ohne Nullrandwerte? Ausserdem checken, ob
% Nichtkonvexit\"at reicht, dawir nicht in H10(Omega) sind}

We start with an estimate for the stabilisation term that we will need in the following:
\begin{lemma}\label{lem.Sh}
 Let $\psi_h\in {\cal L}_h$, {\lang $r\geq 1$} and $p\in H^r(\Omega) \cap W^{1,\infty}(\Omega)$. Under the conditions
 of Section~\ref{sec.discgeneral}, it holds
 with a constant $C$ 
 that is independent of the discretisation
 \begin{align*}
  S_h(\psi_h,\psi_h) \leq CH^2 \|\nabla (p-\psi_h)\|_\Omega^2 + 
  CH^{2r} \|p\|_{H^{r}(\Omega)}^2 + CH^3 \|p\|_{W^{1,\infty}(\Omega)}^2.
 \end{align*}
\end{lemma}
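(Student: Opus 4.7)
The plan is to split $S_h(\psi_h,\psi_h)=S_h^0(\psi_h,\psi_h)+S_h^{\text{aniso}}(\psi_h,\psi_h)$ and bound the two pieces separately. For the anisotropic contribution I would apply the upper bound (\ref{stabbound}) of Lemma~\ref{lem.edgestab} cellwise to obtain $S_h^{\text{aniso}}(\psi_h,\psi_h)\le CH^2\|\nabla\psi_h\|_{\Omega_h^{\text{aniso}}}^2$. Writing $\nabla\psi_h=\nabla(\psi_h-p)+\nabla p$ and using $\nabla p\in L^\infty(\Omega)$ together with the area assumption (\ref{assAniso}) gives
\[
H^2\|\nabla p\|_{\Omega_h^{\text{aniso}}}^2\le H^2\,|\Omega_h^{\text{aniso}}|\,\|p\|_{W^{1,\infty}(\Omega)}^2\le CH^3\|p\|_{W^{1,\infty}(\Omega)}^2,
\]
which delivers the $CH^3\|p\|_{W^{1,\infty}(\Omega)}^2$ contribution together with a piece of $CH^2\|\nabla(p-\psi_h)\|_\Omega^2$.

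For $S_h^0(\psi_h,\psi_h)$, the quasi-uniformity of ${\cal T}_h^0$ forces $\{h_n\}_e\le CH$ on every $e\in{\cal E}_h^0$, hence
\[
S_h^0(\psi_h,\psi_h)\le CH^3\sum_{e\in{\cal E}_h^0}\|[\nabla\psi_h]_e\|_e^2.
\]
In the case $r=1$ I would simply apply the discrete trace inequality $\|\nabla\psi_h\|_e^2\le CH^{-1}\|\nabla\psi_h\|_K^2$ on each of the two cells adjacent to $e$, sum up, and insert $\pm\nabla p$. This already delivers $CH^2\|\nabla(\psi_h-p)\|_\Omega^2+CH^2\|p\|_{H^1(\Omega)}^2$, which matches the target since $CH^{2r}\|p\|_{H^r(\Omega)}^2=CH^2\|p\|_{H^1(\Omega)}^2$ for $r=1$.

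For $r\ge 2$, $\nabla p$ belongs to $H^1(\Omega)^2$, so its traces from either side of an interior edge coincide, yielding $[\nabla p]_e=0$. Introducing a standard $H^1$-stable quasi-interpolant $i_h p\in V_h^r$ (e.g.\ the Ritz projection $R_h p$ from Section~\ref{sec.H1stab}) with the approximation properties $\|\nabla^j(p-i_h p)\|_K\le CH^{r-j}\|p\|_{H^r(\tilde K)}$ for $j=0,1,2$, where $\tilde K$ is a patch around $K$, I decompose $[\nabla\psi_h]_e=[\nabla(\psi_h-i_h p)]_e+[\nabla(i_h p-p)]_e$. To the piecewise polynomial part I apply the discrete trace inequality and the triangle inequality, obtaining
\[
CH^3\sum_{e}\|[\nabla(\psi_h-i_h p)]_e\|_e^2\le CH^2\|\nabla(\psi_h-i_h p)\|_\Omega^2\le CH^2\|\nabla(\psi_h-p)\|_\Omega^2+CH^{2r}\|p\|_{H^r(\Omega)}^2.
\]
To the smooth part I apply the continuous trace inequality combined with the interpolation bounds, yielding
\[
CH^3\sum_{e}\|[\nabla(i_h p-p)]_e\|_e^2\le CH^3\bigl(H^{-1}H^{2(r-1)}+H\,H^{2(r-2)}\bigr)\|p\|_{H^r(\Omega)}^2\le CH^{2r}\|p\|_{H^r(\Omega)}^2.
\]

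The main obstacle is to ensure that all trace and inverse constants remain independent of the anisotropy and the position of the boundary. On ${\cal T}_h^0$ this is immediate from quasi-uniformity, while the anisotropic cells are never attacked edge-by-edge but only through the global upper bound (\ref{stabbound}), which was proved in Lemma~\ref{lem.edgestab} under only the maximum-angle condition. The interpolation operator $i_h p$ is needed only on $\Omega_h^0$, so standard estimates on shape-regular meshes suffice and no aspect-ratio subtleties arise in its analysis.
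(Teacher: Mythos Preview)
Your proposal follows the paper's own proof almost verbatim: split $S_h=S_h^{\text{aniso}}+S_h^0$, treat the anisotropic piece via the cellwise upper bound (\ref{EquNorms}) together with the area assumption (\ref{assAniso}), and for the regular piece exploit $[\nabla p]_e=0$ for $r\ge 2$, insert an interpolant of $p$ (the paper uses $\overline{R}_h p$), and finish with trace plus inverse/interpolation bounds on the shape-regular mesh ${\cal T}_h^0$. One small slip: the Ritz projection does not enjoy the \emph{local} patch estimates $\|\nabla^j(p-i_hp)\|_K\le CH^{r-j}\|p\|_{H^r(\tilde K)}$ you wrote---only global ones (Lemma~\ref{lem.RitzProj})---so either use a genuine Scott--Zhang/Cl\'ement interpolant on ${\cal T}_h^0$ for those local bounds, or, as the paper does, work with the global Ritz estimates and recover the $\nabla^2$ term through an inverse inequality.
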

\begin{remark}
 We will use this lemma below for $\psi_h=p_h$ and $\psi_h=\overline{R}_h p$.
\end{remark}
\begin{proof}
 First, we note that for $r=1$ the estimate follows with Lemma~\ref{lem.edgestab} and the triangle inequality.
 For $r\geq 2$ we split into an anisotropic and a regular part. For the regular part, we 
 use that jumps of gradients
 over interior faces vanish for $p\in H^2(\Omega)$
 \begin{align*}
   S_h(\psi_h,\psi_h) \leq S_h^{\text{aniso}}(\psi_h,\psi_h) + S_h^0(\psi_h-p,\psi_h-p).
 \end{align*}
 For the anisotropic part we use (\ref{EquNorms}), the triangle inequality and the smallness
 of the sub-domain ${\cal T}_h^{\text{aniso}}$
 \begin{align*}
 S_h^{\text{aniso}}(\psi_h,\psi_h) \leq C H^2 \|\nabla \psi_h \|_{\Omega_h^{\text{aniso}}}^2
 &\leq C H^2 \left( \|\nabla (p-\psi_h) \|_{\Omega_h^{\text{aniso}}}^2 + 
 \|\nabla p \|_{\Omega_h^{\text{aniso}}}^2\right)\\
 &\leq C \left( H^2 \|\nabla (p-\psi_h) \|_{\Omega_h^{\text{aniso}}}^2 + 
 H^3 \|p \|_{W^{1,\infty}(\Omega_h^{\text{aniso}})}^2\right).
 \end{align*}
 For the regular part, we split once more, using the triangle and Young's inequality
 \begin{align*}
  S_h^0(\psi_h-p,\psi_h-p) \leq 2 \left(S_h^0(\psi_h-\overline{R}_h p,\psi_h-\overline{R}_h p) + S_h^0(\overline{R}_h p-p,\overline{R}_h p-p)\right).  
 \end{align*}
 We use (\ref{stabbound}) and the triangle inequality for the first part 
 (note that $h_{K,\min} \geq CH$ for $K\in{\cal T}_h^0$)
 \begin{align*}
  S_h^0(\psi_h-\overline{R}_h p,\psi_h-\overline{R}_h p) 
 % &= \gamma H^2 \sum_{e\in{\cal E}_h^0} h_n \int_e | \, [\nabla (\psi_h-\overline{R}_h p)]|^2\, d\text{o}\\
  &\leq CH^2 \|\nabla (\psi_h-\overline{R}_h p)\|_{\Omega}^2 
  \leq CH^2 \left(\|\nabla (\psi_h-p)\|_{\Omega}^2 + \|\nabla (p-\overline{R}_h p)\|_{\Omega}^2\right).
 \end{align*}
 For the second part, we apply the Poincar\'e-like estimate
 \begin{align*}
  \|[\psi]\|_e^2 \leq C \left( H^{-1} \|\psi\|_{K_1\cup K_2}^2 + H \|\nabla \psi\|_{K_1\cup K_2}^2\right),
 \end{align*}
  where $K_1,K_2$ denote the two cells surrounding $e$ 
  (see e.g. Bramble \& King~\cite{BrambleKing1994}, Ciarlet~\cite{Ciarlet1991}). Using Lemma~\ref{lem.RitzProj}
  in combination with an inverse estimate, we obtain 
 \begin{align*}
  S^0(\overline{R}_h p-p,\overline{R}_h p-p) &= 
  \gamma H^2 \sum_{e\in{\cal E}_h^0} h_n \int_e \big| \, [\nabla (\overline{R}_h p-p)]\big|^2\, d\text{o}\\
  &\leq CH^3 \sum_{K\in {\cal T}_h^0} \left( H^{-1} \|\nabla (\overline{R}_h p -p)\|_K^2 + H \|\nabla^2 (\overline{R}_h p -p)\|_K^2\right)\\
  &\leq CH^{2r} \|p\|_{H^{r}(\Omega)}^2.
 \end{align*}
 This completes the proof. 
\end{proof}

\noindent The a priori error analysis will be based on
the Galerkin orthogonality
\begin{align}\label{FullGalOrth}
   A(v-v_h,p-p_h)(\phi_h,\psi_h) - S(p_h,\psi_h) &= 0 \quad 
  \forall\phi_h\in{\cal V}_h, \psi_h\in {\cal L}_h.
\end{align}
We have the following result.

\begin{theorem}\label{theo.conv}
 Let {\sfrei $1\leq r\leq 3$} and let $(v,p) \in \left(H^{r+1}(\Omega) \times \left(H^r(\Omega) \cap W^{1,\infty}(\Omega)\right)\right)$ 
 {\lang and $(v_h,p_h) \in {\cal V}_h^r \times {\cal V}_h^r$ the solution of (\ref{ContStokes}) and (\ref{DiscStokes}), 
 respectively}. Under the conditions
 of Section~\ref{sec.discgeneral}
 it holds that
 \begin{align}
  |||(v-v_h,p-p_h)|||_{\Omega} 
  \leq CH^{\min\{r;3/2\}} \left( \| v \|_{H^{r+1}(\Omega)} + \| p \|_{H^{r}(\Omega)}
  + \| p \|_{W^{1,\infty}(\Omega)} \right). \label{edgeenergy}
 \end{align} 
 Furthermore, we have for the $L^2$-norm error of the velocities
 \begin{align*}
  \|v-v_h\|_{\Omega} \leq CH^{\min\{r+1;5/2\}} \left( \| v \|_{H^{r+1}(\Omega)} + \| p \|_{H^{r}(\Omega)}
  + \| p \|_{W^{1,\infty}(\Omega)} \right).
 \end{align*}
\end{theorem}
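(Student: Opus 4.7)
The plan is to decompose the error into an interpolation part and a discrete part using the Ritz projections of Lemma~\ref{lem.RitzProj}. Write
\[
v-v_h = \eta_v + e_h^v, \qquad p-p_h = \eta_p + e_h^p,
\]
with $\eta_v := v-R_h v$, $e_h^v := R_h v - v_h$, $\eta_p := p - \overline{R}_h p$, $e_h^p := \overline{R}_h p - p_h$. By Lemma~\ref{lem.RitzProj}, the interpolation parts satisfy $|||(\eta_v,\eta_p)|||_{\Omega} \le CH^r(\|v\|_{H^{r+1}(\Omega)} + \|p\|_{H^r(\Omega)})$, so by the triangle inequality it suffices to estimate $|||(e_h^v,e_h^p)|||_{\Omega}$.

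For the discrete part I would apply the stability estimate of Theorem~\ref{theo.StabAnisoSZ} to $(e_h^v,e_h^p)$ and then rewrite the numerator using the Galerkin orthogonality~(\ref{FullGalOrth}):
\[
A(e_h^v,e_h^p)(\phi_h,\psi_h) + S(e_h^p,\psi_h)
= -A(\eta_v,\eta_p)(\phi_h,\psi_h) + S(\overline{R}_h p,\psi_h).
\]
The three summands of $A(\eta_v,\eta_p)$ are treated by Cauchy--Schwarz together with the Ritz estimates: the viscous term by $\nu\|\nabla\eta_v\|\,\|\nabla\phi_h\| \le CH^r\|v\|_{H^{r+1}}\,|||(\phi_h,\psi_h)|||/\nu^{1/2}$, the pressure--divergence term by $\|\eta_p\|\,\|\nabla\phi_h\|$ using (\ref{RitzH2}) with $j=0$, and the velocity--divergence term by integration by parts $(\text{div }\eta_v,\psi_h) = -(\eta_v,\nabla\psi_h)$ so that the factor $H$ in front of $\|\nabla\psi_h\|$ in $|||\cdot|||$ is absorbed against $\|\eta_v\| \le CH^{r+1}\|v\|_{H^{r+1}}$. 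The non-consistency term $S(\overline{R}_h p,\psi_h)$ is bounded via Cauchy--Schwarz for the symmetric positive semi-definite form $S$, the upper bound~(\ref{stabbound}) for $S(\psi_h,\psi_h)$, and Lemma~\ref{lem.Sh} applied with $\psi_h = \overline{R}_h p$. This is precisely where the $\min\{r,3/2\}$ order arises: Lemma~\ref{lem.Sh} produces the three contributions $H^2\|\nabla\eta_p\|^2$, $H^{2r}\|p\|_{H^r}^2$ and $H^3\|p\|_{W^{1,\infty}}^2$; the first two are of order $H^{2r}$, while the third --- a direct consequence of the anisotropy assumption~(\ref{assAniso}) --- gives $H^{3/2}$ after taking the square root. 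Combining everything yields~(\ref{edgeenergy}).

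For the $L^2$-estimate I would use an Aubin--Nitsche duality argument. Let $(z,q)\in\mathcal{V}\times\mathcal{L}$ solve the dual Stokes problem with right-hand side $(v-v_h)/\|v-v_h\|_{\Omega}$; by convexity of $\Omega$ one has $\|z\|_{H^2(\Omega)} + \|q\|_{H^1(\Omega)} \le C$. Taking $(\phi_h,\psi_h) = (R_h z,\overline{R}_h q)$ and invoking the Galerkin orthogonality~(\ref{FullGalOrth}) gives
\[
\|v-v_h\|_{\Omega} = A(v-v_h,p-p_h)(z-R_h z,q-\overline{R}_h q) + S(p_h,\overline{R}_h q).
\]
The first term is bounded termwise as above, each factor picking up one extra power of $H$ from the Ritz error estimates applied to the $H^2$-/$H^1$-regular dual solution; this contributes $H\cdot H^{\min\{r,3/2\}} = H^{\min\{r+1,5/2\}}$. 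The stabilisation remainder is controlled by $S(p_h,\overline{R}_h q)\le S(p_h,p_h)^{1/2} S(\overline{R}_h q,\overline{R}_h q)^{1/2}$, where the first factor is already of the asserted order $H^{\min\{r,3/2\}}$ by the energy estimate just proven (together with Lemma~\ref{lem.Sh} applied to $p_h$), and Lemma~\ref{lem.Sh} applied to $\overline{R}_h q$ with $r=1$ yields the remaining factor $H$.

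The main obstacle, as in the stability proof, is the interaction between the coarse weight $H$ used in~(\ref{defedgestab}) and the anisotropic sub-domain: the $H^3\|p\|_{W^{1,\infty}}^2$ contribution from $\Omega_h^{\text{aniso}}$ in Lemma~\ref{lem.Sh} saturates the convergence rate of the stabilisation non-consistency at $H^{3/2}$, and this cap must be carried consistently through both the primal estimate and the dual argument. The remaining work is the careful bookkeeping needed to ensure that every Cauchy--Schwarz step in the two arguments produces a factor of $|||(\phi_h,\psi_h)|||_{\Omega}$ (or its dual counterpart) that can be divided out in the supremum.
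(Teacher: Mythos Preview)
Your proposal is correct and follows essentially the same route as the paper: split via the Ritz projections, apply the inf--sup stability of Theorem~\ref{theo.StabAnisoSZ} to the discrete remainder, use Galerkin orthogonality to replace $(e_h^v,e_h^p)$ by $(\eta_v,\eta_p)$ plus the non-consistency $S(\overline{R}_h p,\psi_h)$, and finish with Lemma~\ref{lem.Sh}; for the $L^2$-bound, Aubin--Nitsche duality combined with the same stabilisation splitting.

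Two small technical points where your write-up deviates from the paper and would need a tweak. First, the integration by parts $(\operatorname{div}\eta_v,\psi_h)=-(\eta_v,\nabla\psi_h)$ is not clean here: $\psi_h\in{\cal L}_h$ carries no boundary condition and $\eta_v$ need not vanish on $\Gamma^n$, so a boundary term survives. The paper avoids this by bounding directly $|(\operatorname{div}\eta_v,\psi_h)|\le \|\nabla\eta_v\|_{\Omega}\,\|\psi_h\|_{\Omega}$, which already yields the required $H^r$ against $|||(\phi_h,\psi_h)|||$. Second, in the dual argument you invoke Lemma~\ref{lem.Sh} for $S(\overline{R}_h q,\overline{R}_h q)$, but that lemma assumes $q\in W^{1,\infty}(\Omega)$, which the dual pressure is not known to satisfy. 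The paper instead uses the elementary upper bound~(\ref{stabbound}) together with the $H^1$-stability of $\overline{R}_h$ to get $S(\overline{R}_h q,\overline{R}_h q)\le CH^2\|\nabla q\|_{\Omega}^2\le CH^2$, which is all that is needed. With these two adjustments your argument coincides with the paper's.
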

\begin{proof}
 We prove the energy norm estimate first. Therefore, we split the error into a projection and a discrete part
 \begin{align*}
  |||(v-v_h)&,(p-p_h)|||_{\Omega}
  \leq |||(v-R_h v),(p-R_hp)|||_{\Omega} + |||(R_h v-v_h),(\overline{R}_h p-p_h)|||_{\Omega}.
 \end{align*}
By Lemma~\ref{lem.RitzProj} we get the following bound for the Ritz projections
\begin{align*}
 |||(v-R_h v), (p-\overline{R}_h p)|||_{\Omega} \leq CH^{r} \left(\| p \|_{H^{r}(\Omega)} 
 + \|v\|_{H^{r+1}(\Omega)} \right).
\end{align*}
For the discrete part, Theorem~\ref{theo.StabAnisoSZ} yields
  \begin{align*}
  |||(R_h v -v_h,\overline{R}_h  p -p_h)|||_{\Omega}
  \leq C \sup_{(\phi_h,\psi_h)\in {\cal V}_h\times {\cal L}_h} \frac{A(R_h v - v_h,\overline{R}_h p - p_h)(\phi_h,\psi_h) + S(\overline{R}_h  p - p_h,\psi_h)}{|||(\phi_h,\psi_h)|||}. %\leq C \|f\|_{H^{-1}(\Omega)}.
 \end{align*}
We use the Galerkin orthogonality (\ref{FullGalOrth})
\begin{align}
\label{GalOrthIsUsed1}
 A(R_h v - v_h,\overline{R}_h p - p_h)(\phi_h,\psi_h) + S(\overline{R}_h  p - p_h,\psi_h)
 = A(R_h v - v,\overline{R}_h p - p)(\phi_h,\psi_h) + S(\overline{R}_h  p,\psi_h)
 %&\qquad\quad+ CH \left(\|\nabla^2 v\|_\Omega + \|\nabla p\|_\Omega \right) \|\nabla \phi_h\|_{S\cup S_h}
\end{align}
and by means of the the Cauchy-Schwarz inequality, it follows that
\begin{align*}
 A(R_h v - v,&\overline{R}_h  p - p)(\phi_h,\psi_h) + S(\overline{R}_h  p,\psi_h) \\
 &\leq C \left(\|\nabla (R_h v - v)\|_{\Omega} + \|\overline{R}_h  p - p \|_{\Omega} + S^{1/2}(\overline{R}_h p,\overline{R}_h p) \right) |||(\phi_h,\psi_h)|||.
\end{align*}
With the help of Lemma~\ref{lem.Sh} and the estimates for the Ritz projection (\ref{RitzH2}), we obtain
\begin{align*}
 |||(R_h v -v_h,\overline{R}_h  p -p_h)|||_{\Omega} \leq CH^{\min\{r;3/2\}} 
 \left(\|v\|_{H^{r+1}(\Omega)} + \|p \|_{H^{r}(\Omega)} 
 + \|p\|_{W^{1,\infty}(\Omega)}\right),
\end{align*}
which proves (\ref{edgeenergy}).

To show the $L^2$-norm estimate, we make use of a dual problem. 
Let $(v^*,p^*) \in (H^1_0(\Omega;\Gamma^d) \times L^2(\Omega))$
the solution of
\begin{align}
 A(\phi,\psi)(v^*,p^*) = (v-v_h,\phi)_\Omega.\label{dualPStokes}
\end{align}
 {\lang As $\Omega$ is assumed to be} a convex polygon, we have
\begin{align}
 \|v^*\|_{H^2(\Omega)} + \|p^*\|_{H^1(\Omega)} \leq C \|v-v_h\|_{\Omega}.\label{RegDual}
\end{align}
We test with $\phi= v-v_h, \psi= p-p_h$ and use the Galerkin orthogonality (\ref{FullGalOrth}) 
\begin{align}
\begin{split}
\|v-v_h\|_\Omega^2 &=  A(v-v_h,p-p_h)(v^*,p^*)\\
&= A(v-v_h,p-p_h)(v^*- R_h v^*,p^*-\overline{R}_h p^*) 
+ S(p_h, \overline{R}_h p^*).\label{GalOrthIsUsed2}
\end{split}
\end{align}
Using the regularity of the dual solution (\ref{RegDual}), we obtain for the first part
\begin{align*}
A(v-v_h,p-p_h)(v^*- R_h v^*,p^*-\overline{R}_h p^*)
&\leq CH |||(v-v_h, p-p_h||| \, \|v-v_h\|_\Omega.
\end{align*}
For the stabilisation term, we have with Lemma~\ref{lem.edgestab} and the $H^1$-stability of 
the Ritz projection 
\begin{align*}
 S(p_h,\overline{R}_h  p^*) \leq S(p_h,p_h)^{1/2} S(\overline{R}_h  p^*,\overline{R}_h  p^*)^{1/2} 
 \,\leq\, CH S(p_h,p_h)^{1/2} \|\nabla p^*\|_{\Omega}
 \,\leq\, CH S(p_h,p_h)^{1/2} \|v-v_h\|_{\Omega}
\end{align*}
For the first term, Lemma~\ref{lem.Sh} gives us
\begin{align*}
S(p_h,p_h)^{1/2} &\leq C \left(H \|\nabla (p-p_h)\|_\Omega + 
 CH^{r} \|p\|_{H^{r}(\Omega)} + H^{3/2} \|p\|_{W^{1,\infty}(\Omega)}\right)\\
  &\leq C \left(|||(v-v_h,p-p_h)||| + H^{r} \|p\|_{H^{r}(\Omega)} + 
  H^{3/2} \|p\|_{W^{1,\infty}(\Omega)}\right). 
\end{align*}
% 
% For the discrete pressure, we conclude from the energy norm estimate (\ref{edgeenergy}) that
% \begin{align*}
%  \|\nabla p_h\|_{\Omega} \leq \|\nabla p\|_{\Omega} + \|\nabla(p-p_h)\|_{\Omega} \leq C \|\nabla p\|_{\Omega}.
% \end{align*}
This completes the proof.
\end{proof}

% \begin{remark}
%  The same result can be shown for a non-convex domain $\Omega_h$ as well. 
% %  If the boundary approximation 
% %  order is $r$, the following results can be shown for $S_h=\Omega_h\setminus\Omega$ and
% %  $S=\Omega\setminus\Omega_h$ using Poincar\'e-like estimates
% %  \begin{align*}
% % \|\phi\|_{S\cup S_h} &\leq CH^r \|\phi\|_{H^1(\Omega)} \quad \forall \phi\in H^1(\Omega)\\
% % \|\phi_h\|_{\Gamma^d} &\leq C H^r \|\nabla \phi_h\|_{S\cup S_h} \quad \forall \phi_h\in {\cal V}_h.
% %  \end{align*}
% Using the results of Lemma~\ref{lem.Sh}, we obtain instead of the Galerkin orthogonality the estimate
% {\sfrei Check!}
%  \begin{align*}
% \big|A(v-v_h,&p-p_h)(\phi_h,\psi_h) + S(p_h,\psi_h)| \\
% &\leq cH^{\frac{r+1}{2}} \left(\|\nabla^2 v\|_\Omega 
% + \|\nabla p\|_\Omega \right) \|\nabla \phi_h\|_{B\cup B_h} \quad \forall (\phi_h,\psi_h) \in {\cal V}_h\times {\cal L}_h
% \end{align*}
% and 
% \begin{align*}
%  \big|A(v&-v_h,p-p_h)(v^*,p^*) - A(v-v_h,p-p_h)(v^*,p^*)\big| \\
%   &\leq cH^{\frac{r+1}{2}} \left(\|\nabla (v-v_h)\|_{S\cup S_h} + \|p-p_h\|_{S\cup S_h}\right)
%   \left(\|\nabla^2 v^*\|_{\Omega} + \|\nabla p^*\|_{\Omega}\right),
% \end{align*}
% which can be used in (\ref{GalOrthIsUsed1}) and (\ref{GalOrthIsUsed2}). For the detailed argumentation,
% we refer to Richter~\cite{RichterBuch} or Ciarlet~\cite{CiarletBuch}.
% \end{remark}

{\sfrei
\begin{remark}{(Polynomial degrees)}
 Theorem~\ref{theo.conv} shows in particular the optimal convergence orders for 
 first-order polynomials $r=1$. For quadratic elements the convergence orders
 are improved to ${\cal O}(H^{3/2})$ in the energy norm and ${\cal O}(H^{5/2})$ in 
 the $L^2$-norm of velocities.  Due to the non-consistency of the stabilisation, 
 no further improvement of the convergence orders is 
 achieved for higher-order polynomials.
\end{remark}
}

\begin{remark}{(Larger anisotropic region)}
 We remark that the stability result in {\lang Theorem~\ref{theo.StabAnisoSZ}} holds independently of the 
 {\lang smallness assumption for}
 $\Omega_h^{\text{aniso}}$ (\ref{assAniso}). {\lang Without this assumption} the convergence rates in 
 Theorem~\ref{theo.conv} {\lang are} bounded by ${\cal O}(H^{\min\{r;1\}})$ in the energy norm 
 and by ${\cal O}(H^{\min\{r+1;2\}})$ in the $L^2$-norm of the velocity. The same results hold true, 
 if the pressure $p$ is {\lang only in $H^1(\Omega)$, but not in $W^{1,\infty}(\Omega)$}.
\end{remark}

\begin{remark}{(Smooth domains)}\label{rem.smoothDomain}
 The same results can be shown for a smooth domain $\Omega$, that is not necessarily convex, 
 instead of a convex polygon. To achieve convergence orders $r\geq 1$ in the energy norm, 
 iso-parametric finite elements must be used in the boundary cells
 to obtain a higher-order boundary approximation.
 Key to the proof is the estimation of certain integrals over the {\lang regions} $\Omega\setminus\Omega_h$
 and $\Omega_h\setminus\Omega$, where $\Omega_h$ denotes the ``discrete'' domain spanned by the 
 mesh cells, and the derivation of a perturbed Galerkin orthogonality. For the details, we refer to
 Richter~\cite{RichterBuch} or Ciarlet~\cite{CiarletBuch}.
\end{remark}

\begin{remark}{(3 space dimensions)}
The argumentation can be easily generalised to $\Omega\subset \mathbb{R}^3$ using the analogously defined
stabilisation term {\lang to} (\ref{defedgestab}). Here, ${\cal E}_h$ denotes the set of faces instead of edges.
\end{remark}

\section{Numerical examples}
\label{sec.num}

{\sfrei
In the following we will present three numerical examples to substantiate the analytical findings and 
to show the capabilities of the approach. First, we motivate the form of the 
pressure stabilisation term in Section~\ref{sec.motstab} by comparing it with different alternatives including 
the standard CIP pressure stabilisation in an example with
alternating isotropic and anisotropic cells.
In Section~\ref{sec.num_stat}, we show that the stabilisation can be used for all kind of different 
anisotropies that arise using the locally modified finite element method. Finally, we apply the pressure stabilisation in a non-stationary 
and non-linear fluid-structure interaction problem with a moving interface in Section~\ref{sec.num_fsi}.
All examples include extremely anisotropic cells with aspect ratios $\kappa_K\geq 1000$.
}

{\sfrei
\subsection{Example 1: Comparison of different edge-based pressure stabilisation terms}
\label{sec.motstab}

In a first example, we would like to motivate the form of the stabilisation term in the anisotropic cells numerically. 
Therefore, we discretise the unit square $\Omega=[-1,1]^2$ with anisotropic cells without a bounded change in anisotropy. To be precise,
we define the cell sizes in vertical direction in an alternating way to be $h_y=H/1000$ and $999 H/1000$, while 
the cell sizes in horizontal direction are uniform $h_x=H/2$. A sketch of a resulting 
coarse grid is given 
in Figure~\ref{fig.anisomesh}. 
%The size of the anisotropic region is $|\Omega_h^{aniso}| = 1/1000$, independent of $h$.

\begin{figure}
\centering
 \resizebox*{0.3\textwidth}{!}{
\begin{picture}(0,0)%
\includegraphics{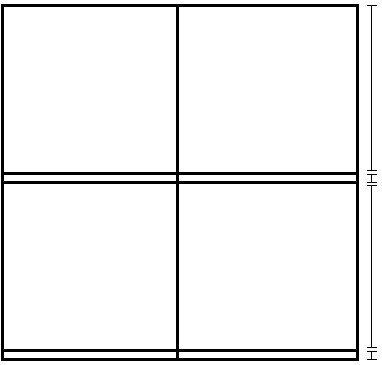}%
\end{picture}%
\setlength{\unitlength}{1243sp}%
\begingroup\makeatletter\ifx\SetFigFont\undefined%
\gdef\SetFigFont#1#2{%
  \fontsize{#1}{#2pt}%
  \selectfont}%
\fi\endgroup%
\begin{picture}(5819,5522)(1757,-6439)
\put(7561,-6384){\makebox(0,0)[lb]{\smash{{\SetFigFont{5}{6.0}{\color[rgb]{0,0,0}$H/1000$}%
}}}}
\put(7561,-3684){\makebox(0,0)[lb]{\smash{{\SetFigFont{5}{6.0}{\color[rgb]{0,0,0}$H/1000$}%
}}}}
\put(7561,-2356){\makebox(0,0)[lb]{\smash{{\SetFigFont{5}{6.0}{\color[rgb]{0,0,0}$H - H/1000$}%
}}}}
\put(7561,-4966){\makebox(0,0)[lb]{\smash{{\SetFigFont{5}{6.0}{\color[rgb]{0,0,0}$H - H/1000$}%
}}}}
\end{picture}% 
 }
 \caption{\sfrei\label{fig.anisomesh} Sketch of a very coarse mesh with alternating cell sizes $h_y$ in vertical direction. 
 Note that the anisotropies in the sketch are less 
 extreme than they actually are for visualisation purposes.}
\end{figure}

We consider the Stokes equations given in (\ref{ContStokes}) with viscosity $\nu=1$ and impose a \textit{do-nothing} boundary condition 
on the right boundary: $\partial_n v - p n =  0$. Furthermore, we specify non-homogeneous Dirichlet data on the left, upper and lower boundaries and a volume force
$f$ in such a way that a manufactured solution solves the system.

To construct an analytical solution, 
we define the velocity field $v$ as curl of the scalar function $\psi=k(x,y)^2 (x-1)^3$, where $k(x,y) = (x-x_0)^2 + (y-y_0)^2 -r^2$, 
and choose the pressure in such a way that the \textit{do-nothing condition} holds on the right boundary:
\begin{eqnarray}\label{manuSol}
\begin{aligned}
 v_1 = \partial_y \psi &=4 k(x,y) (x-1)^3 (y-y_0)  \\
 v_2= -\partial_x \psi &= -4 k(x,y) (x-1)^3 (x-x_0) - 3 k(x,y)^2 (x-1)^2 \\
 p= \partial_x v_1 =\partial_{xy} \psi &= 8(x-x_0) (x-1)^3 (y-y_0) + 12  k(x,y) (x-1)^2 (y-y_0).
 \end{aligned}
\end{eqnarray}
In this section, we set $x_0=y_0=0$.

In order to study the effect of the stabilisation term in the anisotropic cells, we set $\Omega_h^{\text{aniso}}=\Omega_h$, which
means that the stabilisation term proposed in this paper
reduces to
\begin{align*}
 S(p_h, \psi_h) = S_h^{\text{aniso}}(p_h,\psi_h) 
 &:= \gamma H^2 \sum_{e \in {\cal E}_h} 
 \int_e \{h_n \nabla p_h\cdot \nabla \psi_h\}_e \, do.
\end{align*}
We will compare the effect of this stabilisation to the standard CIP stabilisation term consisting of jump terms only
\begin{align*}
 S_{cip}(p_h, \psi_h) = \gamma \sum_{e \in {\cal E}_h} h_{\tau}^3 \int_e [\partial_n p_h] [\partial_n \psi_h]\, do.
\end{align*}
Moreover, we consider different cell weights for the two terms. For the anisotropic stabilisation
we consider a variant using only local cell sizes (see Remark~\ref{rem.stabterm})
\begin{align*}
 S_2(p_h, \psi_h) = 
 &:= \gamma \sum_{e \in {\cal E}_h} 
 \int_e \{h_n^3 \partial_n p_h\cdot \partial_n \psi_h\}_e + \{h_n h_{\tau}^2 \partial_n p_h\cdot \partial_n \psi_h\}_e \, do.
\end{align*}
This is the usual
weighting for stabilisation on anisotropic elements, see e.g. Braack \& Richter~\cite{BraackRichter2005Enumath}. 
% in the context of the LPS method).

% For the CIP stabilisation, we consider weights of order ${\cal O}(H^3)$, which is an upper bound for the cell sizes in all directions
% \begin{align*}
%  S_{iso,2}(p_h, \psi_h) = \gamma H^3 \sum_{e \in {\cal E}_h} \int_e [\partial_n p_h] [\partial_n \psi_h]\, do.
% \end{align*}

\begin{table}
  \centering
  {\sfrei
 \begin{tabular}{r|c|c|ccc|ccc} 
 &\multicolumn{1}{c|}{\;\;$\|\nabla v-v_h\|_{L^2}$\;\;}&\multicolumn{1}{c|}{\;\;$\|v-v_h\|_{L^2}\;\;$}
 &\multicolumn{3}{c|}{$\|p-p_h\|_{L^2}$} &\multicolumn{3}{c}{$\|\nabla(p-p_h)\|_{L^2}$}\\
$H$  &\multicolumn{1}{c|}{$S$}  &\multicolumn{1}{c|}{$S$} &\multicolumn{1}{c}{$S$}
&\multicolumn{1}{c}{$S_2$} &\multicolumn{1}{c|}{$S_{cip}$} &\multicolumn{1}{c}{$S$} &\multicolumn{1}{c}{$S_2$} &\multicolumn{1}{c}{$S_{cip}$}\\
\hline
 $1/4\;$  &$20.55$ &$1.27$  &$14.04$   &$14.04$    &$41.23$ &$123.5$    &$3073.6$ &$\;\;535.9$\\
 $1/8\;$  &$10.14$ &$3.20\cdot 10^{-1}$    &$\;\;4.60$            &$\;\;5.86$ &$22.06$ &$\;\;79.8$ &$3910.3$ &$\;\;595.9$\\
 $1/16$   &$\;\;5.02$ &$8.00\cdot 10^{-2}$ &$\;\;1.46$            &$\;\;2.43$ &$16.56$ &$\;\;52.6$ &$4336.1$ &$\;\;912.3$\\
 $1/32$   &$\;\;2.50$ &$2.00\cdot 10^{-2}$ &$\;\;0.47$            &$\;\;1.07$ &$10.82$ &$\;\;35.7$ &$4534.5$ &$1197.9$\\
\hline
Estim.&\;\;1.02 &2.00 &\;\;1.62 &\;\;1.26 &\;0.69 &\;\;0.61 &-0.17 &-0.43\\
%Expect. &\multicolumn{2}{c|}{1.00}&\multicolumn{2}{c|}{2.00}  &\multicolumn{2}{c}{1.00}\\
\end{tabular}}

\caption{\sfrei $L^2$- and $H^1$-norm errors of velocity and pressure for the three different 
stabilisation variants on an anisotropic grids with anisotropies alternating from 1 to 1000.
The velocity norm errors do not show significant differences for different stabilisations
and are therefore only shown using the stabilisation $S$.
We estimated the convergence order by a least squares fit 
of the function $e(h) =ch^{\alpha}$.\label{tab.anisoterms}}
\end{table}

\begin{figure}[t]
        \centering
            \includegraphics[width=0.3\textwidth]{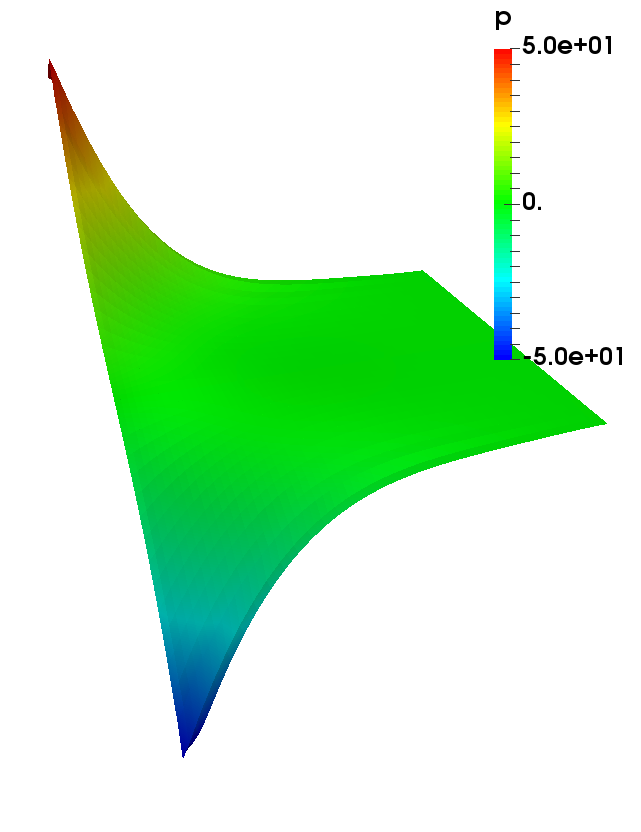}\hfil
            \includegraphics[width=0.3\textwidth]{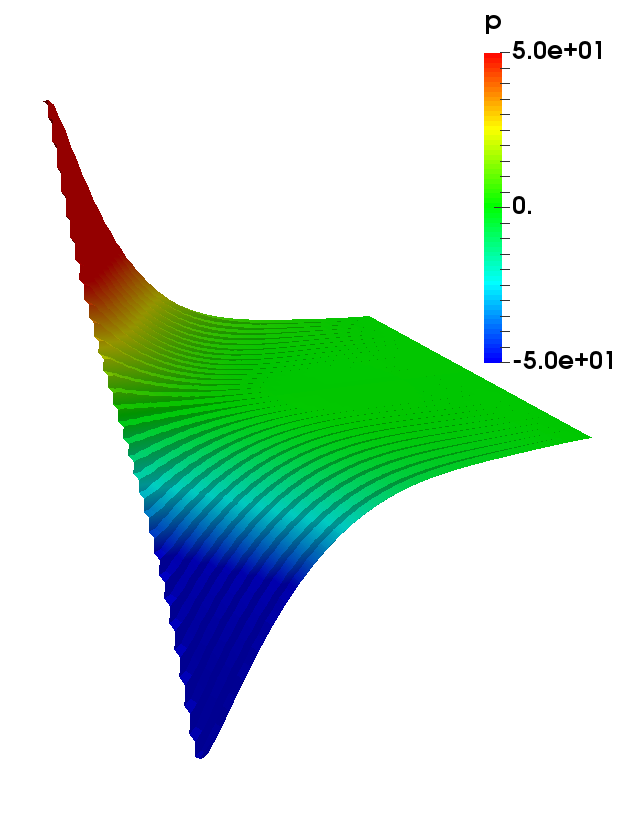}\hfil
            \includegraphics[width=0.3\textwidth]{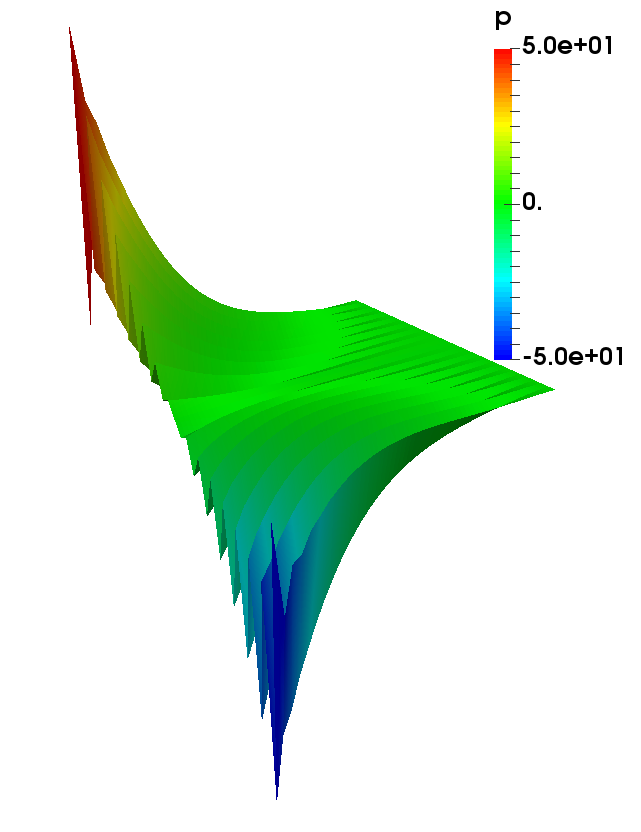}
       \caption{\label{fig.pressplot} Visualisation of the pressure variable over $\Omega$ for the three different stabilisations
       $S, S_2$ and $S_{cip}$ (from left to right) on the mesh with $H=1/16$.}
\end{figure}

In Table~\ref{tab.anisoterms}, we show the $L^2$ and $H^1$-norm errors of the velocities and of pressure on four different meshes. The stabilisation parameter 
has been chosen $\gamma=10^{-2}$ for $S$
and by a factor of $4$ larger for $S_2$ and $S_{cip}$, as on regular cells we have $h_n\approx h_{\tau} \approx H/2$. 
The velocity norm errors do not show significant differences 
for the different stabilisations. Therefore we show only the values 
for the anisotropic stabilisation $S$. The convergence rates for the velocities are as expected. 

Concerning the pressure approximation the situation is different. We observe only slow convergence for the standard CIP stabilisation $S_{cip}$ in the 
$L^2$-norm of pressure, especially on the finer meshes. Changing the weights from $h_{\tau}$ to $h_n$ or $H$ or choosing a larger parameter for 
$\gamma$ did not lead to considerable improvements.

The anisotropic stabilisations, on the other hand, seem to converge even faster than linearly, which would be expected 
from the analysis, as the averages are used everywhere ($\Omega_h^{\text{aniso}}=\Omega$). On the finer meshes, we see a clear advantage 
of the weighting used in the analysis ($S$) compared to using local cell sizes ($S_2$). For this weighting we observe 
even convergence in the $H^1$-seminorm error of the pressure, which increases for $S_2$ and $S_{cip}$.}

{\sfrei
The reason for the different convergence behaviours becomes clear, when we plot the pressure solution over $\Omega$ for the three stabilisations, 
see Figure~\ref{fig.pressplot}
for $H=1/16$. For $S_{cip}$ we observe wild oscillations, which shows that the standard interior penalty stabilisation is not suitable
to control the pressure on this anisotropic mesh.
Smaller oscillations are visible for the term $S_2$, that are due to  to wrong scaling of the derivatives. The stabilisation $S$ leads in contrast
to a smooth behaviour 
of $p$.
}

%Source H1error10, 11, 8

\begin{figure}
\centering
\begin{minipage}{0.28\textwidth}
 %\vspace{-7.4cm} 
\includegraphics[width=\textwidth]{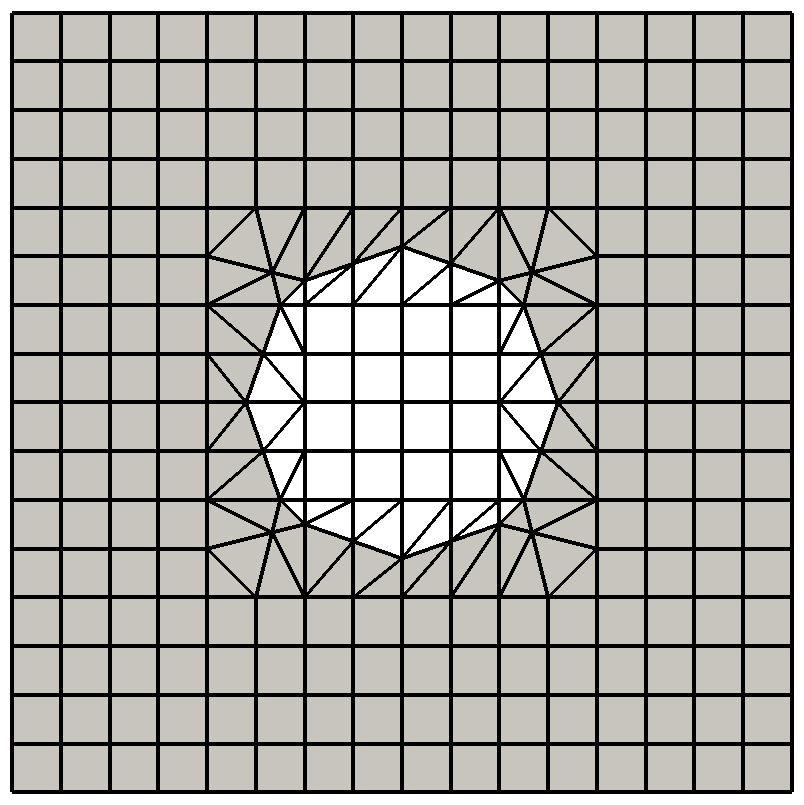}
\end{minipage}
 \caption{\label{fig.press_mesh}Illustration of the coarsest mesh  {\sfrei used for the numerical example of Section 6.2 for $x_0=0$}. 
 The domain $\Omega_{h,f}$ is visualised in grey.}
\end{figure}

\subsection{Example 2: Different kind of anisotropies within the locally modified finite element method}
%A stationary Stokes system with known analytical solution}
\label{sec.num_stat}

{\sfrei Next, we show that the proposed pressure stabilisation can be used in combination with the 
\textit{locally modified finite element method} to approximate curved boundaries with all kinds of arising anisotropies.
}
% 
% 
% {\lang First}, we present a numerical example with a {\lang manufactured} analytical solution to confirm the 
% theoretical results. 

{\lang To define the geometry we extract an inner circle of radius 
$r=0.4$: $\Omega = (-1,1)^2 \setminus B_{0.4}(x_0,y_0)$ from the unit square}. The boundary $\partial\Omega$ is a mixture
of a polygon and a smooth boundary, such that the theoretical results hold true (cf.$\,$Remark~\ref{rem.smoothDomain}).
We discretise the unit square with a {\sfrei uniform} patch mesh and resolve the {\lang circular boundary by means of} 
the \textit{locally modified finite element} method. We define the set ${\cal T}_h^{\text{aniso}}$ 
as the union of all patches that are cut by the {\lang circle}.
A sketch of a coarse mesh for $(x_0,y_0)=(0,0)$ is given in Figure~\ref{fig.press_mesh}. 
{\sfrei While this mesh is quite isotropic, strong anisotropies will arise when we change the horizontal position $x_0$ 
of the midpoint of the circle.
We use again the manufactured solution (\ref{manuSol}) and the data and boundary conditions specified in the previous example. 
Additionally, we impose homogeneous Dirichlet conditions on the boundary of the circle.}

% We impose a \textit{do-nothing} boundary condition on the right boundary of the square: $\partial_n v - p n =  0$
% and a homogeneous Dirichlet condition on the boundary of the circle. Furthermore,
% we {\lang specify} non-homogeneous Dirichlet data on the left, upper and lower boundaries {\lang and} a volume force
% $f$ in such a way that {\lang the following} analytical solution solves the system.
% 
% To construct an analytical solution, 
% we define the velocity field $v$ as curl of the scalar function $\psi=k(x,y)^2 (x-1)^3$, where $k(x,y) = (x-x_0)^2 + (y-y_0)^2 -r^2$, 
% and choose the pressure in such a way that the \textit{do-nothing condition} holds {\lang on the right boundary}:
% \begin{align*}
%  v_1 = \partial_y \psi &=4 k(x,y) (x-1)^3 (y-y_0)  \\
%  v_2= -\partial_x \psi &= -4 k(x,y) (x-1)^3 (x-x_0) - 3 k(x,y)^2 (x-1)^2 \\
%  p= \partial_x v_1 =\partial_{xy} \psi &= 8(x-x_0) (x-1)^3 (y-y_0) + 12  k(x,y) (x-1)^2 (y-y_0).
% \end{align*}

First, we consider the case that the
midpoint of the circle coincides with the origin $x_0=y_0=0$. For ease of implementation, we extend $v$ by zero
in the inner circle and use a harmonic extension of the pressure there.
{\lang We use the stabilisation} $S$ defined and analysed in this work. {\sfrei As in the previous example, we define a
second
stabilisation} $S_2$ that uses the local cell sizes $h_n$ and $h_{\tau}$ as weights instead of $H$.
Precisely, we define
\begin{eqnarray}\label{newpressstab}
\begin{aligned}
 S_2(p_h,\psi_h) := &\gamma_i \sum_{e \in {\cal E}_h^{\text{aniso}}} \int_e 
 \big\{h_n (h_n^2 \partial_n p_h \partial_n \psi_h + h_{\tau}^2\partial_{\tau} p_h \partial_{\tau} \psi_h)\big\}_e \, do 
 \\
 &+\,\gamma_0 \sum_{e \in {\cal E}_h^0} 
 \int_e \left\{h_n\right\}_e^3 [\nabla p_h]_e\cdot [\nabla \psi_h]_e \, do.
\end{aligned}
\end{eqnarray}
Note that for the regular edges in ${\cal E}_h^0$, it holds $h_n \sim H$ and furthermore, 
the jump of the tangential derivatives vanishes. As discussed in Remark~\ref{rem.stabterm}, 
we are not able to show stability for this stabilisation
{\sfrei on the unstructured anisotropic 
mesh arising from the \textit{locally modified finite element method}}.

Finally, we remark that in our implementation, we neglect the jump terms over 
outer patch edges, as they would introduce additional 
couplings in the system matrix. This is not the case for the mean value terms, which have 
to be considered on all edges $e\in {\cal E}_h^{\text{aniso}}$.

\begin{table}
  \centering
 \begin{tabular}{r|rr|cc|rr} 
 &\multicolumn{2}{c|}{$\|\nabla v-v_h\|_{L^2}$}&\multicolumn{2}{c|}{$10^2 \cdot\|v-v_h\|_{L^2}$}
 &\multicolumn{2}{c}{$\|p-p_h\|_{L^2}$}\\
$H$  &\multicolumn{1}{c}{$S$}  &\multicolumn{1}{c|}{$S_2$} &\multicolumn{1}{c}{$S$}
&\multicolumn{1}{c|}{$S_2$} &\multicolumn{1}{c}{$S$} &\multicolumn{1}{c}{$S_2$}\\
%&\multicolumn{1}{c}{$s=1$} & \multicolumn{1}{c}{$s=2$} &\multicolumn{1}{c|}{-} &
%\multicolumn{1}{c}{$s=1$} & \multicolumn{1}{c}{$s=2$} & \multicolumn{1}{c}{-}\\
\hline
 $1/4\;$  &$18.05$ &$18.04$ &$74.7$&$74.5$ &$3.33$ &$2.82$ \\
 $1/8\;$  &$9.06$ &$9.06$  &$18.7$ &$18.6$ &$1.09$ &$0.91$ \\
 $1/16$   &$4.52$ &$4.52$  &$4.67$ &$4.67$ &$0.36$ &$0.29$ \\
 $1/32$   &$2.26$ &$2.26$  &$1.17$ &$1.17$ &$0.12$ &$0.10$ \\
\hline
Estim.&0.99&0.99 &1.99 &1.99 &1.60 &1.63\\
Expect. &\multicolumn{2}{c|}{1.00}&\multicolumn{2}{c|}{2.00}  &\multicolumn{2}{c}{1.00}\\
\end{tabular}

\caption{$L^2$- and $H^1$-norm of the velocity and $L^2$-norm error of the pressure for {\lang two} different 
stabilisation variants. We estimated the convergence order by a least squares fit 
of the function $e(h) =ch^{\alpha}$
and show the expected convergence rates from Theorem~\ref{theo.conv}.\label{tab:conv_press}}
\end{table}

In Table~\ref{tab:conv_press}, we show the $L^2$- and the $H^1$-norm error of the velocity
as well as the $L^2$-norm error of the pressure for the {\lang two} stabilisations on four different meshes. 
Furthermore, we show an estimated convergence order
based on the calculations.
The stabilisation parameter is chosen $\gamma_i=\gamma_0=2.5\cdot10^{-3}$ for $S$ and again by a factor of 4 larger for $S_2$.

While the velocity errors are almost identical for both stabilisations, the pressure error
is slightly smaller for $S_2$. The convergence behaviour of the velocity norms coincides almost 
perfectly with the theoretical results for 
$S$ given above. The $L^2-$norm of the pressure 
converges with a higher order $\alpha\geq1.5$ for 
both stabilisations, while we had only shown first order convergence
in Theorem~\ref{theo.conv}. 
This can be explained by means of super-convergence effects due the structured grid 
in the sub-domain $\Omega_h^0$. The errors seen here
are essentially a combination of interpolation errors and the error contribution from 
{\lang the} non-consistency of the stabilisation
term in $\Omega_h^{\text{aniso}}$.
For the $L^2$-norm of the pressure, the latter is dominant and restricts 
the convergence order to ${\cal O}(H^{3/2})$.

% This convergence order is expected for the stabilisation $S$. Due to the regularity of
% the solution, the jump of the pressure over edges vanishes and we can use
% \begin{align*}
%  S_K(\overline{R}_h p, \overline{R}_h p) = S_K(\overline{R}_h p-p,\overline{R}_h p-p)
% \end{align*}
% in cells $K$ away from the boundary region in the proof of Theorem~\ref{theo.conv}. This can 
% be used to get second order convergence in
% these parts of the domain. As the remaining cells span a domain of size ${\cal O}(H)$, the 
% convergence order ${\cal O}(H^{3/2})$ results. 

In order to study the effect of different anisotropies, we move the midpoint of 
the circle {\lang next} in intervals of 
$10^{-3}$ up to $x_0=0.249$ to the right. This covers all kinds of anisotropies, 
as for $x_0=0.25$ the midpoint moves 
by exactly one patch on the coarsest grid. {\sfrei Exemplarily we show in Figure~\ref{fig.aniso} some of the most 
anisotropic cells that arise, with a maximum aspect ratio of 
$1893.9$. Moreover, we give some details of the maximum anisotropies for four different positions $x_0$
in Table~\ref{tab.aniso}.}

\begin{figure}[t]
{\sfrei
  \centering
            \includegraphics[width=0.27\textwidth]{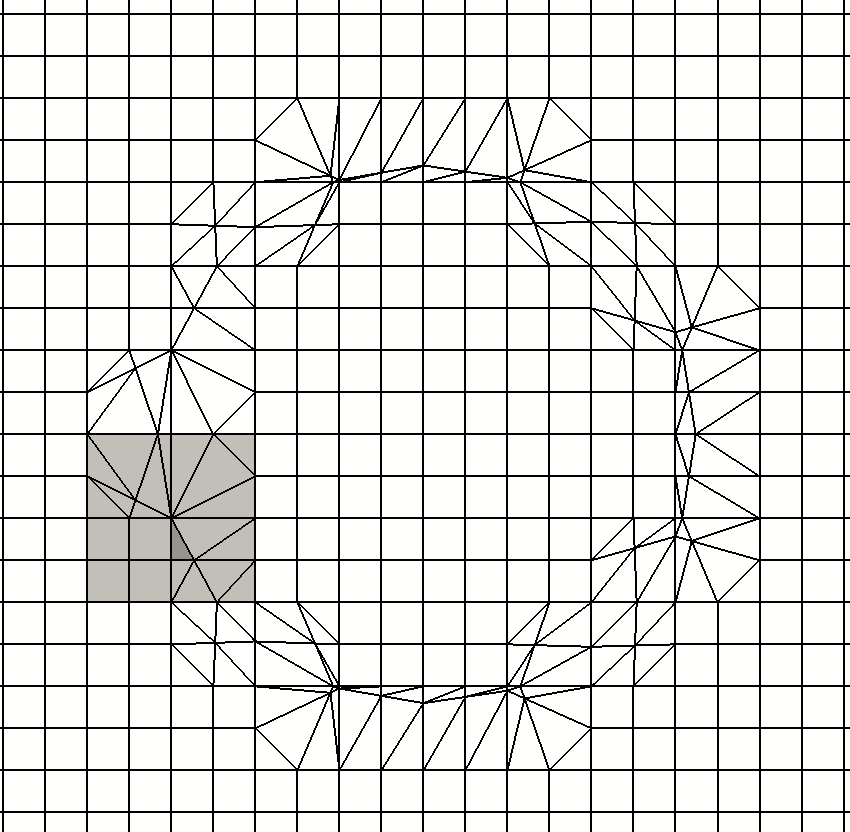}\hfil
            \includegraphics[width=0.27\textwidth]{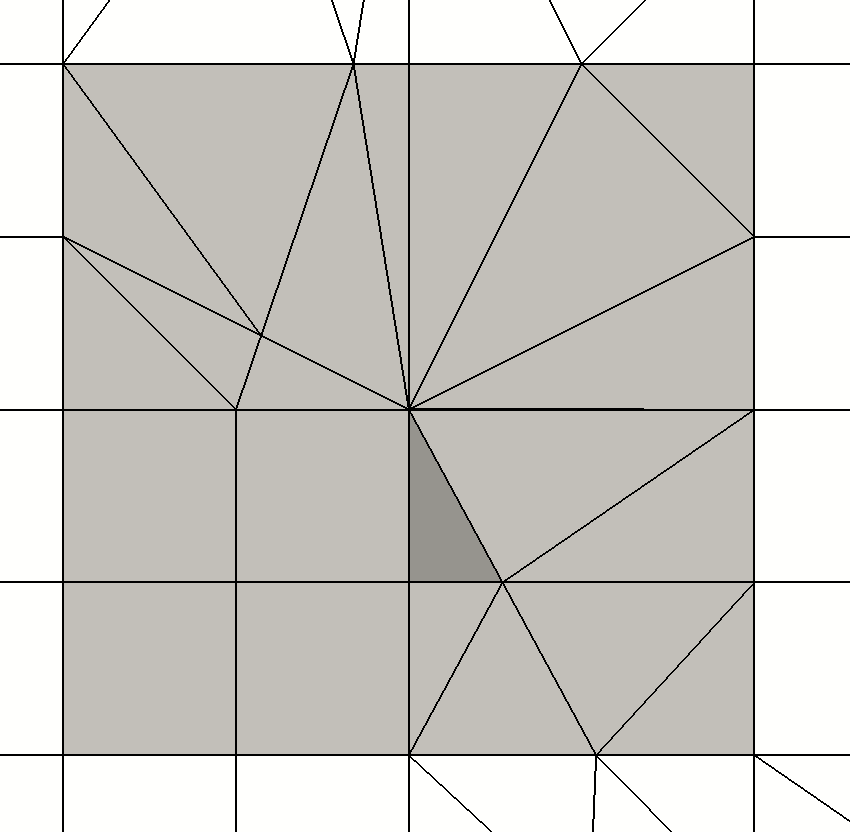}\hfil
            \includegraphics[width=0.27\textwidth]{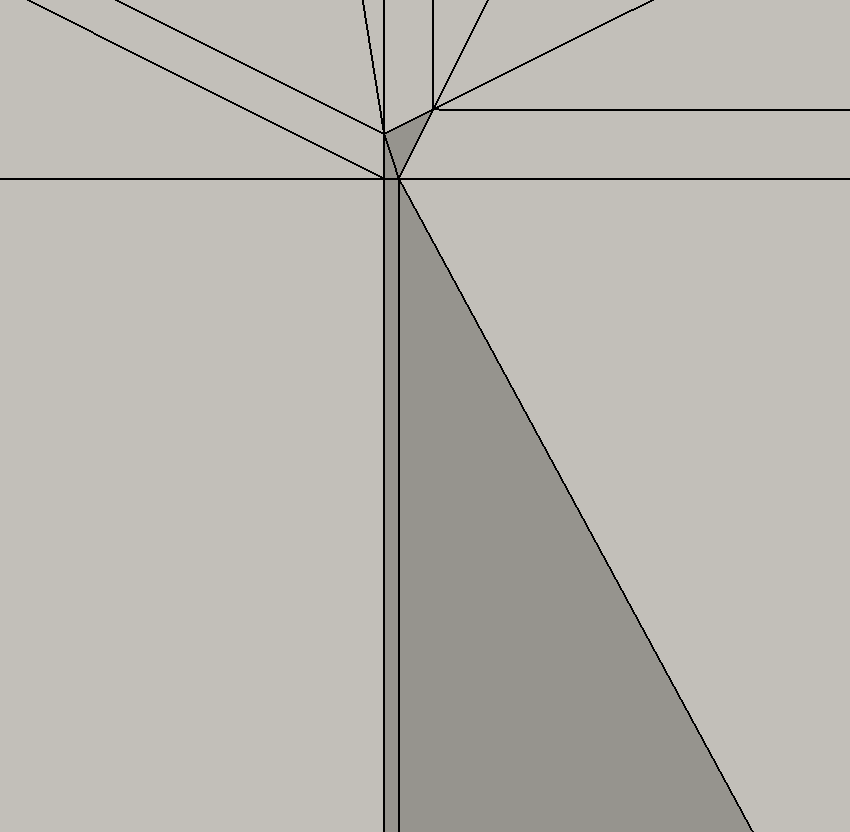}
       \caption{\label{fig.aniso} {\sfrei Visualisation of the mesh anisotropy for $H=1/8$ and $x_0=0.006$. From left to right, we
       zoom twice around the light gray and the dark gray area, respectively. The aspect ratio of the vertically stretched gray
       triangle that is only visible in the second zoom on the right is around $1893.9$.}} 
       }
\end{figure}

\begin{table}
{\sfrei
\begin{center}
     \begin{tabular}{c|ccc|cc|c}
       \toprule
        $x_0$ &$|K_{\max}|$ &$|K_{\min}|$ & $\frac{|K_{\max}|}{|K_{\min}|}$ &  $|e_{\text{max}}|$ &  $|e_{\text{min}}|$ 
        & $\max\limits_{K\in\mathcal{T}_h} \kappa_K$ \\ \hline
        0 &$4.99\cdot 10^{-3}$ &$2.28\cdot 10^{-5}$ &$2.20\cdot 10^2$ &$1.36\cdot 10^{-1}$ &$3.97\cdot 10^{-3}$ &$1.58\cdot 10^1$ \\
        0.006 &$5.85\cdot 10^{-3}$ &$1.65\cdot 10^{-9}$ &$3.55\cdot10^6$ &$1.40\cdot 10^{-1}$ &$3.30\cdot 10^{-5}$ &$1.89\cdot 10^3$\\
        0.015 &$5.84\cdot 10^{-3}$ &$6.98\cdot 10^{-9}$ &$8.37\cdot10^5$ & $1.45\cdot10^{-1}$ &$7.20\cdot10^{-5}$&$8.85\cdot10^2$ \\
       0.045 &$4.93\cdot 10^{-3}$ &$1.86\cdot 10^{-4}$ &$2.66\cdot10^1$ &$1.41\cdot 10^{-1}$ &$1.25\cdot 10^{-2}$ &$5.12$\\
       \bottomrule
     \end{tabular}
   \end{center}
     \caption{\label{tab.aniso} {\sfrei Properties of the mesh $\mathcal{T}_h$ for $H=1/8$ for four different positions $x_0$. 
     In columns 2 to 4, we show the area of the largest and the
     smallest element $|K_{\max}|$ and $|K_{\min}|$ and their ratio; in columns 5 and 6 the size of the largest and smallest 
     edge $|e_{\text{max}}|$ and $|e_{\text{min}}|$. 
     Finally, in column 7 the biggest aspect ratio $\kappa_K=\frac{|e_{K,\text{max}}|}{|e_{K,\text{min}}|}$ 
     of all elements $K\in \mathcal{T}_h$ is shown. For $x_0=0.006$ and $0.015$, two very anisotropic grids 
     emerge, while for $x_0=0.045$ the grid is almost isotropic.}}}
 
\end{table}

In Figure~\ref{fig.H1p} (left sketch), we plot the $H^1$-norm of the pressure over $x_0$ for 
the stabilisation term $S$ and for the four different meshes. The norm increases uniformly when the circle moves to the right as the
analytical solution $p$ increases. We do not observe any instabilities on any of the four grids. 
{\sfrei This shows in particular that the observed convergence behaviour for $x_0=0$ in Table~\ref{tab:conv_press} is obtained on the 
more anisotropic grids  for $x_0>0$ as well.}

% Furthermore, we observe convergence 
% for $H\to 0$ for every $x_0$ independent of the boundary position.
In the right sketch, we compare the two different stabilisations on the second-coarsest mesh with $H=1/8$. Again, we do not
observe any oscillations. 

\begin{figure}[t]
\centering
\begin{minipage}{0.35\textwidth}
 \hspace{-0.8cm}
  \includegraphics[width=1.25\textwidth]{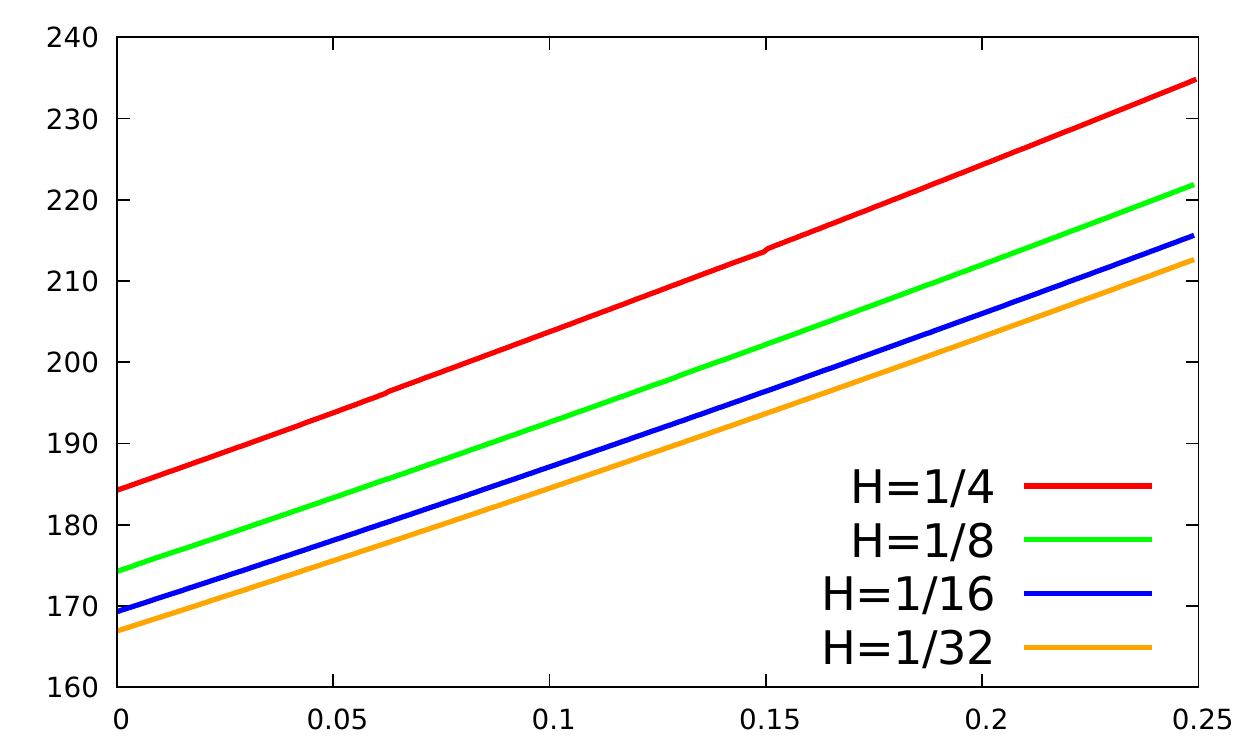}
  \end{minipage}
  %\hspace{-3cm}
  \hfil
  \begin{minipage}{0.35\textwidth}
  \hspace{-0.8cm}
  \includegraphics[width=1.25\textwidth]{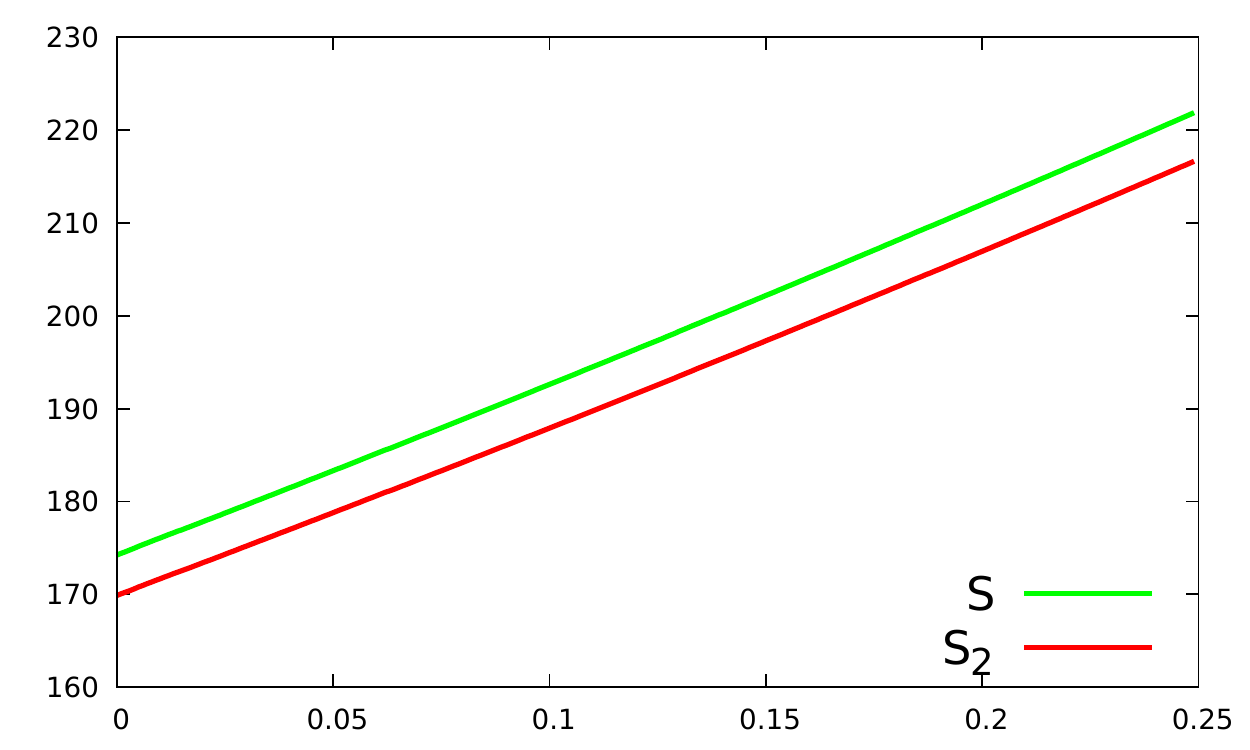}
  \end{minipage}
 \caption{\label{fig.H1p} $H^1$-norm of the discrete pressure for different positions 
 $(x_0,0)$ of the midpoint of the circle plotted over $x_0$.
 \textit{Left:} Stabilisation $S$ on different mesh levels, \textit{right:} Both 
 stabilisations for the mesh with patch size $H=0.125$.}
\end{figure}

\subsection{Example 3: A {\sfrei non-linear} fluid-structure interaction problem}
\label{sec.num_fsi}

To show the capabilities of the approach, we consider a {\sfrei non-stationary and non-linear} fluid-structure interaction problem
{\sfrei with moving interface}. The overall geometry $\Omega$ is the same as in the previous example,
with the difference that the inner ball is now elastic and will be both deformed and moved by the fluid {\lang forces}. 
The sub-domains are denoted by $\Omega_f(t)$ and $\Omega_s(t)$, separated by an 
interface $\Gamma_i(t)$. We consider the incompressible Navier-Stokes equations
{\lang in the fluid domain $\Omega_f(t)$}.
{\lang In} the solid domain $\Omega_s(t)$, 
we impose a hyper-elastic non-linear St.Venant Kirchhoff
material law. Together with the {\lang standard FSI} coupling conditions, the complete set of equations for 
the fluid velocity $v_f$, the pressure $p_f$, the solid displacement $u_s$ and the solid velocity $v_s$ reads 
in Eulerian coordinates
\begin{align}
      \begin{split}
      \rho_f \partial_t v_f + \rho_f (v_f\cdot\nabla) v_f - \operatorname{div} \sigma_f
       &=\rho_f f \\
       \operatorname{div} v_f &=0 \; \end{split}\quad\;\Bigg\} \quad\text{in } \; \Omega_f(t),\notag\\
       \begin{split}
       J \rho^0_s (\partial_t v_s + v_s \cdot \nabla v_s) - 
      \operatorname{div} \sigma_s &= J \rho_s^0 f \\
      \partial_t u_s + v_s \cdot \nabla u_s - v_s &= 0 \end{split}\quad\Bigg\}\quad \text{in } \; \Omega_s(t),\label{completeSet}\\ 
      \begin{split}
            v_f &= v_s \\
            \sigma_fn &= \sigma_s n \;\end{split}\quad\;\,\Bigg\}\quad\text{on } \Gamma_i(t).\notag            
\end{align}
   Here, $F=\nabla T = I - \nabla u_s$ denotes the deformation gradient, $J=\text{det } F$ its determinant and the solid and fluid Cauchy stress tensor are given by
      \begin{align*}
      \sigma_s = J F^{-1} \big(2 \mu_s E_s + &\lambda_s \text{tr}(E_s)\big) F^{-T}, 
      \quad E_s = \frac{1}{2} \left(F^{-T} F^{-1} -I\right),\quad
       \sigma_f = \frac{\rho_f \nu_f}{2}\left(\nabla v_f + \nabla v_f^T\right) -p_f I.
      \end{align*}
      The boundary conditions for the fluid are a parabolic inflow profile on the left boundary, the \textit{do-nothing} boundary 
      condition on the right and
      homogeneous Dirichlet conditions on bottom and top. As material parameters, we use the viscosity $\nu_f=1$, the densities $\rho_f=\rho_s=1000$ and the solid Lam\'e parameters 
      $\mu_s=10^4$ and $\lambda_s=4\cdot10^4$. We start with zero initial data and increase the inflow profile gradually until at $t=0.1$ the profile $v^d(y) = 1-y^2$ is reached.
      
      To solve the system of equations, we use the monolithic \textit{Fully Eulerian} approach introduced by 
      $\,$Dunne \& Rannacher~\cite{DunneRannacher}. {\lang For time discretisation we use
      Rothe's method in combination with} a modified dG(0) time-stepping scheme~\cite{FreiRichterTime}. 
      Due to the moving interface the mesh changes from time step to time step.
      To conserve the incompressibility of the discrete solution, the old velocity is projected onto the new mesh by a Stokes projection after each time step, see Besier \& Wollner~\cite{BesierWollner}. 
      For space discretisation, we use the locally modified finite element method for all variables in combination with the analysed pressure stabilisation technique. 
      A detailed derivation and analysis of the methods can be found in~\cite{DissFrei}.
     
      \begin{figure}
        \centering
            \includegraphics[width=0.27\textwidth]{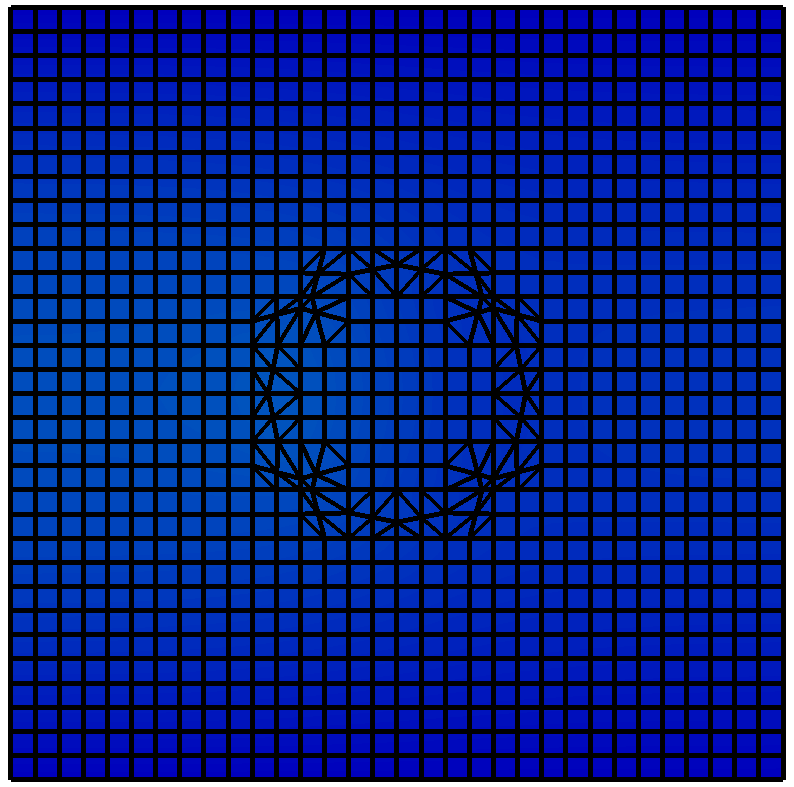}\hfil
            \includegraphics[width=0.27\textwidth]{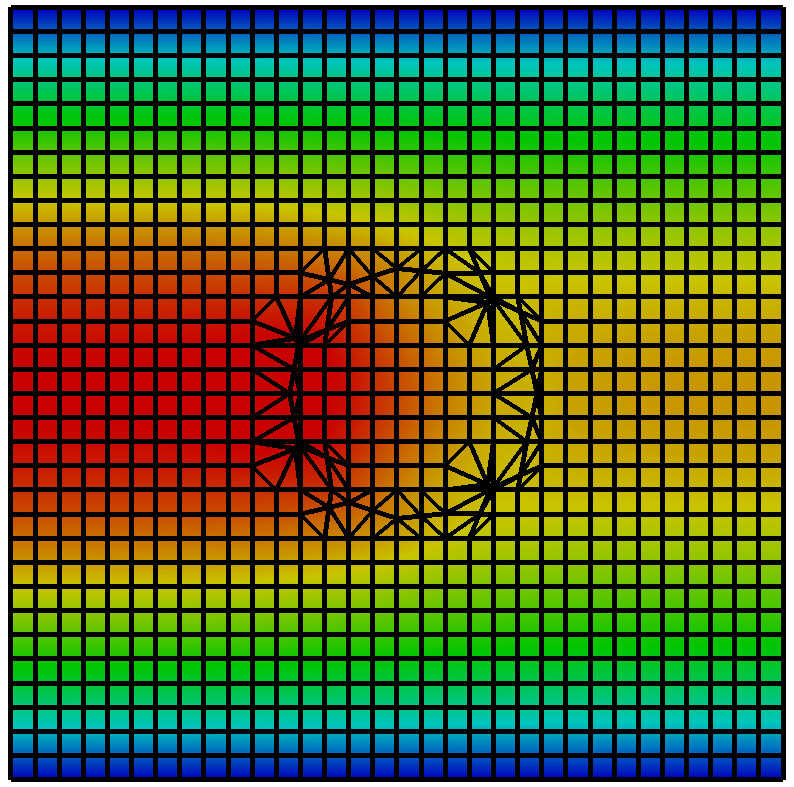}\hfil
            \includegraphics[width=0.27\textwidth]{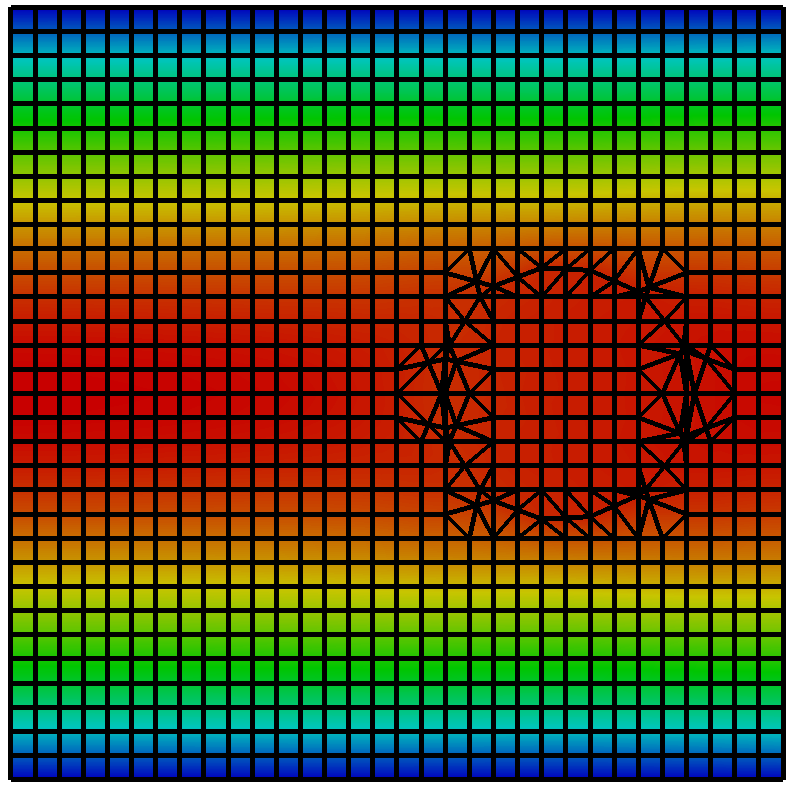}
       \caption{Snapshots of the moving ball on a coarse mesh ($H=1/8$) at times $t=0, 0.1$ and $0.5$. The colouring illustrates the horizontal velocity.\label{fig.movingball}}
      \end{figure}
     
      We study the effect of the stabilisation terms $S$ and $S_2$. 
      The stabilisation parameters are chosen $\gamma_i=10^{-4}$ and $\gamma_0=10^{-2}$
      for $S$ and again by a factor of 4 larger for $S_2$. We use {\lang the
      time step} $k=10^{-2}$ and patch meshes obtained by 4, 5, 6 and 7 global refinements of the
      unit square. 
      For refinement level 5, the resulting mesh including the sub-triangulation that 
      resolves the interface are shown in Figure~\ref{fig.movingball} at three different instances of time. 
      First, the ball is compressed at its left boundary ($t=0.1$, middle), then it starts to move to the right.
      {\sfrei Extremely anisotropic cells occur, as in the previous example.}
      
      In Table~\ref{tab.conv_moving}, we show the $L^2$-norm and the $H^1$-semi-norm of velocity and 
      the $L^2$-norm of the 
      pressure over the fluid domain at time $t=0.5$.
      %, as well as an extrapolated convergence order. 
      Again the velocity errors are almost identical for 
      $S$ and $S_2$, while we observe small deviations in the values of the pressure norm.
      All the values converge reasonably well, in most cases even better than predicted. 
      %Even the $H^1$-norm of the pressure seems to be bounded and to converge for both stabilisations. 
      Both pressure stabilisations seem to stabilise similarly well, such that in this example
      no clear advantage for one of the methods can be given.
      
      \begin{table}
  \centering
 \begin{tabular}{r|rr|cc|cc} 
 &\multicolumn{2}{c|}{$\|\nabla v_h\|_{L^2}$}&\multicolumn{2}{c|}{$\|v_h\|_{L^2}$} 
 &\multicolumn{2}{c}{$\|p_h\|_{L^2}$} \\
$H$  &\multicolumn{1}{c}{$S$}  &\multicolumn{1}{c|}{$S_2$} &\multicolumn{1}{c}{$S$} 
&\multicolumn{1}{c|}{$S_2$}&\multicolumn{1}{c}{$S$} &\multicolumn{1}{c}{$S_2$}\\
%&\multicolumn{1}{c}{$s=1$} & \multicolumn{1}{c}{$s=2$} &\multicolumn{1}{c|}{-} &
%\multicolumn{1}{c}{$s=1$} & \multicolumn{1}{c}{$s=2$} & \multicolumn{1}{c}{-}\\
\hline
 $1/4\;$   &$2.320$ &$2.319$  &$1.354$ &$1.355$ &3813.1 &3707.7\\
 $1/8\;$   &$2.333$ &$2.333$  &$1.356$ &$1.356$ &4608.4 &4597.6\\
 $1/16$    &$2.335$ &$2.335$  &$1.355$ &$1.355$ &4832.8 &4832.6\\
 $1/32$    &$2.335$ &$2.335$  &$1.354$ &$1.354$ &4912.4 &4912.1\\
%  \hline
%  Extrapol&2.335&2.335 &1.354 &1.354 &4885.5 &4749.9 &4937.2 &4933.4 \\
%  Conv.   &2.68 &2.63  &?  &? &0.57 &0.74   &1.76 &1.86 \\
\end{tabular}

\caption{Behaviour of velocity and pressure norms under mesh refinement for the two different 
stabilisation terms.\label{tab.conv_moving}}
\end{table}
      
Finally, we show a plot of the pressure at time $t=0.5$ on a coarse and a fine mesh in Figure~\ref{fig.press} {\lang for the stabilisation $S$}. 
We see that
on both meshes the pressure is nicely controlled by the stabilisation. On the coarser mesh, however, 
the fine scale behaviour of the 
pressure near the interface $\Gamma_i$ is significantly disturbed.

\begin{figure}
\centering
\includegraphics[width=0.49\textwidth]{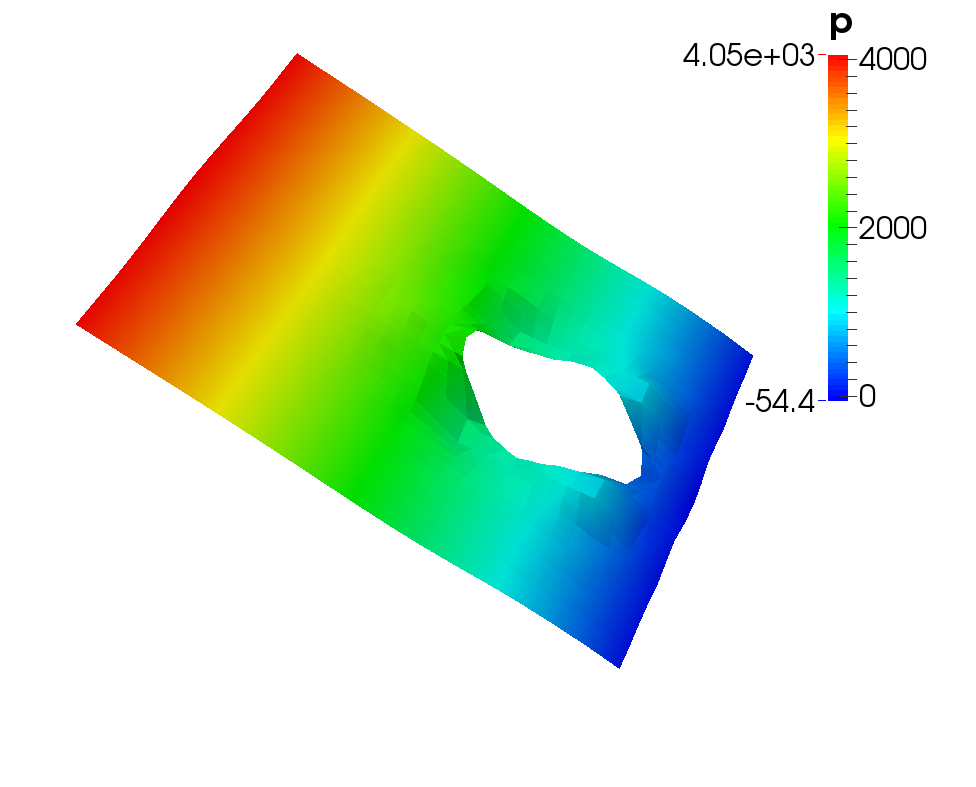}\hfill
\includegraphics[width=0.49\textwidth]{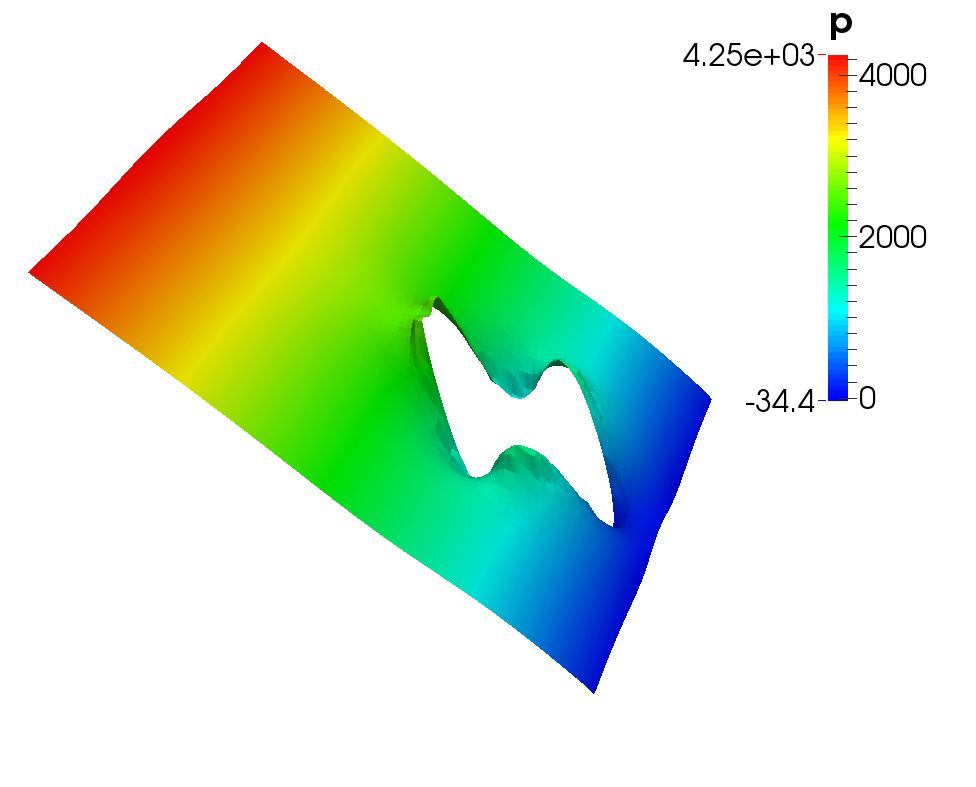}
 \caption{\label{fig.press} Pressure profiles for stabilisation $S$ at time $t=0.5$ on a coarse mesh $H=1/8$ (left) and a fine mesh $H=1/32$ (right).}
\end{figure}

%Stab 10 -> S_2      
% 0.318369  1690.89  2366.83  2.29714  1.34177  0.443737  4.98831e-05
% 0.332068  3036.12  3774.21  2.32291  1.3509  0.446724  -6.85921e-06
% 0.337506  3864.49  4538     2.33277  1.35384  0.446167  3.85814e-06
% 0.33814   4310.32  4822.67  2.33482  1.35396  0.447798  4.19552e-06
% -         4990     5169.7   2.337    1.354    0.4472
% -         0.77     1.03     1.55     1.90     2.00

%Stab 12 -> S
% 0.316013  3677.39  3813.09  2.31963  1.354  0.443265  -1.7629e-05
% 0.330877  4092.59  4608.37  2.33285  1.3563  0.447028  -3.41753e-06
% 0.337006  4313.23  4832.79  2.33533  1.35512  0.446664 -1.75677e-06
% 0.33798  4526.51  4912.39  2.3351    1.3541  0.447916  3.5912e-06
%          4885.5    4937.2   2.3354    ?   0.4474
%          0.57      1.76      2.68     ?    3.04

%Stab 13 -> S_3
%  0.316201  3689.84  3707.72  2.31893  1.35464  0.443171  -1.60588e-05
%  0.330898  4142.79  4597.61  2.33268  1.35627  0.446971  -7.55567e-07
%  0.336938  4336.2   4832.64 2.33534   1.35516 0.4466    -4.4e-06
%  0.337981  4538.82  4912.08  2.3351   1.3541  0.447912  3.75335e-06
%            4749.9   4933.4   2.335    ?       0.4474
%            0.74     1.86     2.63     ?       2.91

%Stab 11 -> S_4
%0.31846  1670.44  2303.2  2.29644  1.34147  0.443602  3.86337e-05
%0.332108  3050.09  3759.38  2.32261  1.35081  0.44669  -6.09113e-06
%0.337501  3873.39  4538.88  2.33294  1.35386  0.446457  9.18667e-07
%0.338141  4314.14  4822.38  2.33481  1.35396  0.447795  4.24274e-06
%          4990     5162.2   2.337    1.354     0.4474
%          0.77     1.05     1.54     1.90      1.94

\section{Conclusion}
\label{sec.conclusion}

We have presented a pressure stabilisation scheme that
is able to deal with anisotropic grids without bounded change of anisotropy. 
The approach is especially 
suitable if only a small part of the mesh is anisotropic, which is typical for 
interface problems and problems with complex boundaries. 
% For the
% analysis, we used a weighting combined of the patch size $H$ and the 
% local element size $h_n$ in the stabilisation terms. 
% Our numerical 
% results indicate that a choice of weights that is
% based only on the local element sizes $h_n$ and $h_{\tau}$ performs 
% similarly well. A theoretical analysis of the stability for these weights remains open, 
% however.
{\sfrei Our numerical results show that in contrast to the standard interior penalty pressure stabilisation
the proposed method is able to control the pressure on arbitrarily anisotropic meshes without bounded
changes in anisotropy. A possible extension of this work includes the stabilisation of convection-dominated 
convection-diffusion problems.

Moreover, the pressure stabilisation can be extended to three space dimensions using the corresponding 
stabilisation term on faces instead of edges. The extension of the \textit{locally modified finite element method}
to three space dimensions is 
in principal also possible. The implementation is however subject to future work.
}

\bibliographystyle{plainnat}

\end{document}